\theoremstyle{plain}
\newtheorem{theorem}{Theorem}[section]
\newtheorem{proposition}[theorem]{Proposition}
\newtheorem{lemma}[theorem]{Lemma}
\theoremstyle{definition}
\newtheorem{definition}[theorem]{Definition}
\newtheorem{assumption}[theorem]{Assumption}
\theoremstyle{remark}
\newtheorem{remark}[theorem]{Remark}
\icmltitlerunning{Double Momentum Method for Lower-Level Constrained Bilevel Optimization}
\begin{document}

\twocolumn[
\icmltitle{Double Momentum Method for Lower-Level Constrained Bilevel Optimization}



\icmlsetsymbol{equal}{*}

\begin{icmlauthorlist}
\icmlauthor{Wanli Shi}{comp,yyy}
\icmlauthor{Yi Chang}{comp}
\icmlauthor{Bin Gu}{comp,yyy}
\end{icmlauthorlist}

\icmlaffiliation{yyy}{Mohamed bin Zayed University of Artificial Intelligence, UAE}
\icmlaffiliation{comp}{School of Artificial Intelligence, Jilin University, China}

\icmlcorrespondingauthor{Bin Gu}{jsgubin@gmail.com}
\icmlkeywords{Machine Learning, ICML}

\vskip 0.3in
]



\printAffiliationsAndNotice{\icmlEqualContribution} 

\begin{abstract}
Bilevel optimization (BO) has recently gained prominence in many machine learning applications due to its ability to capture the nested structure inherent in these problems. 
Recently, many hypergradient methods have been proposed as effective solutions for solving large-scale problems.
However, current hypergradient methods for the lower-level constrained bilevel optimization (LCBO) problems need very restrictive assumptions, namely, where optimality conditions satisfy the differentiability and invertibility conditions and lack a solid analysis of the convergence rate. What's worse, existing methods require either double-loop updates, which are sometimes
less efficient.
To solve this problem, in this paper, we propose a new hypergradient of LCBO leveraging the theory of nonsmooth implicit function theorem instead of using the restrive assumptions. In addition, we propose a \textit{single-loop single-timescale} algorithm based on the double-momentum method and adaptive step size method and prove it can return a $(\delta, \epsilon)$-stationary point with $\tilde{\mathcal{O}}(d_2^2\epsilon^{-4})$ iterations. Experiments on two applications demonstrate the effectiveness of our proposed method.
\end{abstract}

\section{Introduction}

Bilevel optimization (BO) \cite{bard2013practical,colson2007overview} plays a central role in various significant machine learning applications, including hyper-parameter optimization \cite{pedregosa2016hyperparameter,bergstra2011algorithms,bertsekas1976penalty,Shi_Gu_2021}, meta-learning \cite{feurer2015initializing,franceschi2018bilevel,rajeswaran2019meta}, reinforcement learning \cite{hong2020two,konda2000actor}. Generally speaking, the BO can be formulated as follows, 
\begin{align}
	&\min_{x\in\mathcal{X}} \ F(x) = f(x,y^*(x)) \\
	 &s.t. \ y^*(x)=\mathop{\arg\min}_{y\in\mathcal{Y}}g(x,y),\nonumber
\end{align}
where $\mathcal{X}$ and $\mathcal{Y}$ are convex subsets in $\mathbb{R}^{d_1}$ and $\mathbb{R}^{d_2}$, respectively.  It involves a competition between two parties or two objectives, and if one party makes its choice first, it will affect the optimal choice of the other party. 

Recently, hypergradient methods have shown great effectiveness in solving various large-scale bilevel optimization problems, where there is no constraint in the lower-level objective, \textit{i.e.}, $\mathcal{Y}=\mathbb{R}^{d_2}$. Specifically, \cite{franceschi2017forward,pedregosa2016hyperparameter,ji2021bilevel} proposed several double-loop algorithms to solve the BO problems. They first apply the gradient methods to approximate the solution to the lower-level problem and then implicit differentiable methods \cite{pedregosa2016hyperparameter,ji2021bilevel} or explicit differentiable methods \cite{franceschi2017forward} can be used to approximate the gradient of the upper-level objective w.r.t $x$, namely hypergradient, to update $x$. However, in some real-world applications, such as in a sequential game, the problems must be updated at the same time \cite{hong2020two}, which makes these methods unsuitable. To solve this problem, \cite{hong2020two} propose a single-loop two-timescale method, which updates $y$ and $x$ alternately with stepsize $\eta_y$ and $\eta_x$, respectively, designed with different timescales as $\lim_{k\rightarrow \infty}{\eta_x}/{\eta_y}=0$. However, due to the nature of two-timescale updates, it incurs the sub-optimal complexity $\mathcal{O}(\epsilon^{-5})$ \cite{chen2021single}. To further improve the efficiency, \cite{huang2021biadam,khanduri2021near,chen2021single,guo2021randomized} proposed single-loop single-timescale methods, where ${\eta_x}/{\eta_y}$ is a constant. These methods have the complexity of $\tilde{\mathcal{O}}(\epsilon^{-4})$ ($\tilde{\mathcal{O}}$ means omitting logarithmic factors) or better ($\tilde{\mathcal{O}}(\epsilon^{-3})$) to achieve the stationary point. However, all these methods are limited to the bilevel optimization problem with unconstrained lower-level problems and require the upper-level objective
function to be differentiable. They cannot be directly applied when constraints are present in the lower-level optimization, i.e., $\mathcal{Y}\not=\mathbb{R}^{d_2}$,
as the upper-level objective function is naturally non-differentiable \cite{xu2023efficient}.

\begin{table*}[t]
	\centering
	\caption{Several representative hypergradient approximation methods for the lower-level constrained BO problem. 
   (The last column shows iteration numbers to find a stationary point. The gray color is used to highlight the main limitations of the listed algorithms)} 
	\label{tab:constrained_BO_methods}
 \setlength{\tabcolsep}{0.8mm}
 	\begin{tabular}{lcccccc}
		\toprule
		Method  &  $F(x)$&Loop& Timescale &LL. Constraint & Restrictive Conditions& Iterations   \\
		\hline
		AiPOD  \cite{xiao2023alternating}& \cellcolor{lightgray}Smooth&Double&$\times$&\cellcolor{lightgray}Affine sets &Not need &$\tilde{\mathcal{O}}(\epsilon^{-2})$ \\\hline
		IG-AL \cite{9747013}& Nonsmooth&Double&$\times$&\cellcolor{lightgray}Half space &Not need& $\times$ \\
        RMD-PCD  \cite{bertrand2022implicit}&Nonsmooth  & Double&$\times$ &Norm set& \cellcolor{lightgray}$y^*(x)$ is  differentiable &$\times$\\
        JaxOpt\cite{blondel2022efficient}& Nonsmooth&Double&$\times$&Convex set&\cellcolor{lightgray} $y^*(x)$ is differentiable & $\times$ \\
		DMLCBO (Ours) & Nonsmooth & Single&Single & Convex set & Not need & $\tilde{\mathcal{O}}(d_2^2\epsilon^{-4})$ \\
		\bottomrule		
	\end{tabular}
\end{table*}

To solve the lower-level constrained bilevel optimization problem, recently, several methods have been proposed to approximate the hypergradient, as shown in Table \ref{tab:constrained_BO_methods}. Specifically, \cite{xiao2023alternating} reformulate the affine-constrained lower-level problem into an unconstrained problem, and then solve the new lower-level problem and use the implicit differentiable method to approximate the hyper-gradient. However, their convergence analysis only focuses on the affine-constrained problem and cannot be extended to a more general case.  \cite{9747013} solve the inner problem with projection gradient and use the implicit differentiable method to approximate the hyper-gradient for the half-space-constrained BO problem. However, they only give the asymptotic convergence analysis for this special case. Since many methods of calculating the Jacobian of the projection operators have been proposed \cite{martins2016softmax,djolonga2017differentiable,blondel2020fast,niculae2017regularized,vaiter2013local,cherkaoui2020learning}, the explicit or implicit methods can also be used to approximate the hypergradient in the LCBO problems, such as \cite{liu2021towards,bertrand2020implicit,bertrand2022implicit,blondel2022efficient}. However, all these methods are based on restrictive assumptions, namely, where optimality conditions satisfy the differentiability and invertibility conditions, and lack a solid analysis of the convergence rate.  What's worse, these methods can not be utilized to solve the sequential game which is mentioned above. 
Therefore, it is still an open challenge to design a \textit{single-loop single-timescale} method with convergence rate analysis for the lower-level constrained bilevel optimization problems.

To overcome these problems, we propose a novel \textit{single-loop single-timescale} method with a convergence guarantee for the lower-level constrained BO problems. Specifically, instead of using the restive assumptions used in \cite{blondel2022efficient,bertrand2020implicit,bertrand2022implicit}, we leverage the theory of nonsmooth implicit function theorems \cite{clarke1990optimization,bolte2021nonsmooth} to propose a new hypergradient of LCBO. Then, we use the randomized smoothing and Neumann series to further approximate the hypergradient. Using this new hypergradient approximation, we propose a \textit{single-loop single-timescale} algorithm based on the double-momentum method and adaptive step size method to update the lower- and upper-level variables simultaneously. Theoretically, we prove our methods can return a $(\delta,\epsilon)$-stationary point with $\tilde{\mathcal{O}}(d_2^2\epsilon^{-4})$ iterations. The experimental results in two applications demonstrate the effectiveness of our proposed method. 

We summarized our contributions as follows.
\begin{enumerate}
    \item Leveraging the theory of nonsmooth implicit function theorems, we propose a new method to calculate the hypergradient of LCBO without using restrictive assumptions.
    \item We propose a new method to approximate the hypergradient based on randomized smoothing and the Neumann series.  Using this hypergradient approximation, we propose a \textit{single-loop single-timescale} algorithm for the lower-level constrained BO problems.
	\item Existing hypergradient-based methods for solving the lower-level constrained BO problems usually lack theoretical analysis on convergence rate. We prove our methods can return a  $(\delta,\epsilon)$-stationary point with $\tilde{\mathcal{O}}(d_2^2\epsilon^{-4})$ iterations. 
	\item We compare our method with several state-of-the-art methods for the lower-level constrained BO problems on two applications. The experimental results demonstrate the effectiveness of our proposed method. 
\end{enumerate} 

\section{Preliminaries}
\textbf{Notations.} Here, we give several important notations used in this paper.
$\|\cdot\|$ denotes the $\ell_2$ norm for vectors and spectral norm for matrices. $I_d$ denotes a $d$-dimensional identity matrix. $A^{\top}$ denotes transpose of matrix $A$. Given a convex set $\mathcal{X}$, we define a projection operation to $\mathcal{X}$ as $\mathcal{P}_{\mathcal{X}}(x')=\arg\min_{x\in\mathcal{X}}{1}/{2}\|x-x'\|^2$. $co{\mathcal{X}}$ denotes the convex hull of the set $\mathcal{X}$.

\subsection{Lower-level Constrained Bilevel Optimization}
In this paper, we consider the following BO problems where the lower-level problem has convex constraints,
\begin{align}\label{lc_BL}
	\min_{x\in\mathbb{R}^{d_1}} &\ F(x) = f(x,y^*(x)) \\
	s.t. \quad & y^*(x)=\mathop{\arg\min}_{y\in\mathcal{Y}\subseteq\mathbb{R}^{d_2}}g(x,y).\nonumber
\end{align}
 Then, we introduce several mild assumptions on the Problem (\ref{lc_BL}).
\begin{assumption}\label{assump:upper_level}
	The upper-level function $f(x,y)$ satisfies the following conditions:
	\begin{enumerate}
		\item $\nabla_{x}f(x,y)$ is $L_{f}$-Lipschitz continuous w.r.t. $(x,y)\in\mathbb{R}^{d_1}\times\mathbb{R}^{d_2}$ and $\nabla_{y}f(x,y)$ is $L_{f}$-Lipschitz continuous w.r.t $(x,y)\in\mathbb{R}^{d_1}\times\mathbb{R}^{d_2}$, where $L_{f}>0$ .
		\item For any $x\in\mathbb{R}^{d_1}$ and $y\in\mathbb{R}^{d_2}$, we have $\|\nabla_yf(x,y)\|\leq C_{fy}$.
	\end{enumerate}
\end{assumption}
\begin{assumption}\label{assump:lower_level}
	The lower-level function $g(x,y)$ satisfies the following conditions:
	\begin{enumerate}
		\item For any $(x,y)\in\mathbb{R}^{d_1}\times\mathbb{R}^{d_2}$, $g(x,y)$ is twice continuously differentiable in $(x,y)$.
		\item Fix $x$, $\nabla_{y}g(x,y)$ is $L_g$-Lipschitz continuous w.r.t $y$ for some $L_g>0$. 
		\item Fix $x$, for any $y$, $g(x,y)$ is $\mu_g$-strongly-convex in $y$ for some $\mu_g>0$.
		\item  $\nabla_{xy}^2g(x,y)$ is $L_{gxy}$-Lipschitz continuous w.r.t $(x,y)\in\mathbb{R}^{d_1}\times\mathbb{R}^{d_2}$ and $\nabla_{yy}^2g(x,y)$ is $L_{gyy}$-Lipschitz continuous w.r.t $(x,y)\in\mathbb{R}^{d_1}\times\mathbb{R}^{d_2}$, where $L_{gxy}>0$ and $L_{gyy}>0$.
		\item For any $(x,y)\in\mathbb{R}^{d_1}\times\mathbb{R}^{d_2}$, we have $\|\nabla_{xy}^2g(x,y)\|\leq C_{gxy}$.
	\end{enumerate}
\end{assumption}

These assumptions are commonly used in bilevel optimization problems \cite{ghadimi2018approximation,hong2020two,ji2021bilevel,chen2021single,khanduri2021near,guo2021randomized}.

\subsection{Review of Unconstrained Bilevel Optimization Methods}
For the upper-level objective, we can naturally derive the following  gradient w.r.t $x$ using the chain rule (which is defined as hypergradient), 
\begin{align}
	\nabla F(x)=\nabla_x f(x,y^*(x))+(\nabla y^*(x))^{\top}\nabla_yf(x,y^*(x)). \nonumber
\end{align}
The crucial problem of obtaining the hypergradient is calculating $\nabla y^*(x)$. If the lower-level problem is unconstrained, using the implicit differentiation  method and the optimal condition $\nabla_yg(x,y^*(x))=0$, it is easy to show that for a given $x\in\mathbb{R}^{d_1}$, the following equation holds \cite{ghadimi2018approximation,hong2020two,ji2021bilevel,chen2021single,khanduri2021near}
\begin{align}
    \nabla y^*(x)=[\nabla_{yy}^2g(x,y^*(x))]^{-1}\nabla_{yx}^2g(x,y^*(x)).
\end{align}
Substituting $\nabla y^*(x)$ into $\nabla F(x)$, we can obtain the hypergradient. Then, we update $x$ and $y$ alternately using the gradient method.

\subsection{Review of Lower-Level Constrained Bilevel Optimization Problem}

For the constrained lower-level problem, one common method is to use the projection gradient method, which has the following optimal condition,
\begin{align}\label{eqn:y_update}
    y^*(x)=\mathcal{P}_{\mathcal{Y}}(y^*(x)-\eta \nabla_y g(x,y^*(x))),
\end{align}
where $\eta>0$ denotes the step-size. Recently, \cite{blondel2022efficient,bertrand2020implicit,bertrand2022implicit} assume $y^*(x)$ and $\mathcal{P}_{\mathcal{Y}}(\cdot)$ to be differentiable at some special points and use the reverse method 
or implicit gradient method to derive the hypergradient. However, these assumptions are relatively strong and highly limit the usage of these methods. 


\section{Proposed Method}

In this section, we propose a new method to approximate the hypergradient using randomized smoothing that makes convergence analysis possible. Then, equipped with this hypergradient, we propose our \textit{single-loop single-timescale} method to find a stationary point of the lower-level constrained bilevel problem. 

\subsection{Hypergrdient of Lower-level Constrained Bilevel Optimization Problem}

Before we present our method to calculate the hypergradient, we give the following definition of generalized Jacobian and gradient \cite{clarke1990optimization}, which is important in our method.
\begin{definition}
    Given a scaler-valued function $F:\mathbb{R}^n \rightarrow \mathbb{R}$, which is Lipschitz near a given point $x$ of interest. The generalized gradient is defined as:
    \begin{align}
        \partial F(x)=co\{ \lim \nabla F(x_i): x_i\rightarrow x,x_i\not\in S,x_i\not\in\Omega_F\},\nonumber
    \end{align}
    where $\Omega_F$ is the set of points at which $F$ fails to be differentiable and $S$ is any other set of measure zero.   
\end{definition}
\begin{definition}
    Given a vector-valued function $F:\mathbb{R}^n \rightarrow \mathbb{R}^m$, which is Lipschitz near a given point $x$ of interest. Denote the set of points at which $F$ fails to be differentiable by $\Omega_F$ and the usual $m\times n$ Jacobian matrix of partial derivatives by $JF(y)$ whenever $y$ is a point at which the necessary partial derivatives exist. 
    The generalized Jacobian of $F$ at point $x$, denoted as $\partial F(x)$, is the convex hull of all $m\times n$ matrices obtained as the limit of a sequence of the form $JF(x_i)$, where $x_i\rightarrow x$ and $x_i\not\in\Omega_F$. Symbolically, then, one has 
    \begin{align}
        \partial F(x)=co\{ \lim JF(x_i): x_i\rightarrow x,x_i\not\in\Omega_F\}.\nonumber
    \end{align}
\end{definition}
According to \cite{clarke1990optimization}, if we want to use the Jacobian Chain Rule to calculate the generalized gradient $\partial F(x)$ of the LCBO, the most important thing is to ensure that $y^*(x)$ is Lipschitz continuous. Fortunately, under the strongly convex assumption of the lower-level problem in Assumptions \ref{assump:lower_level}, we can obtain the following result.
\begin{lemma}\label{lemma:Lip_y_star}
	Under Assumptions \ref{assump:lower_level}, we have 
	the optimal solution to the lower-level problem is Lipschitz continuous with constant ${L_{g}}/{\mu_g}$.
\end{lemma}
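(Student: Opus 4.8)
The plan is to avoid the implicit-function/stationarity route used in the unconstrained review, since the constraint set $\mathcal{Y}$ makes $\nabla_y g(x,y^*(x))=0$ false in general, and to work instead from the first-order optimality condition in its variational-inequality form. Because $g(x,\cdot)$ is convex and $\mathcal{Y}$ is convex, $y^*(x)$ is characterized by
\begin{align}
\langle \nabla_y g(x, y^*(x)),\, y - y^*(x)\rangle \ge 0 \qquad \text{for all } y\in\mathcal{Y}. \nonumber
\end{align}
Fixing two points $x_1,x_2$ and writing $y_i^\ast = y^\ast(x_i)$, I would instantiate this inequality at $x_1$ with the test point $y=y_2^\ast$, instantiate it at $x_2$ with $y=y_1^\ast$, and then add the two inequalities. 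This yields the single monotonicity-type estimate $\langle \nabla_y g(x_1,y_1^\ast)-\nabla_y g(x_2,y_2^\ast),\, y_2^\ast-y_1^\ast\rangle \ge 0$, which is the crucial object: it couples the two solutions without requiring any differentiability of the map $x\mapsto y^*(x)$.

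Next I would insert the mixed term $\nabla_y g(x_1,y_2^\ast)$ and split
\begin{align}
\nabla_y g(x_1,y_1^\ast)-\nabla_y g(x_2,y_2^\ast)=\bigl[\nabla_y g(x_1,y_1^\ast)-\nabla_y g(x_1,y_2^\ast)\bigr]+\bigl[\nabla_y g(x_1,y_2^\ast)-\nabla_y g(x_2,y_2^\ast)\bigr]. \nonumber
\end{align}
Paired against $y_2^\ast-y_1^\ast$, the first bracket is bounded above by $-\mu_g\|y_1^\ast-y_2^\ast\|^2$ by $\mu_g$-strong convexity of $g(x_1,\cdot)$ (i.e.\ strong monotonicity of $\nabla_y g(x_1,\cdot)$ from Assumption~\ref{assump:lower_level}). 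Rearranging the summed inequality then gives $\mu_g\|y_1^\ast-y_2^\ast\|^2 \le \langle \nabla_y g(x_1,y_2^\ast)-\nabla_y g(x_2,y_2^\ast),\, y_2^\ast-y_1^\ast\rangle$. The remaining cross term involves only the variation of $\nabla_y g$ in its first argument at the fixed point $y_2^\ast$, so Cauchy--Schwarz followed by the Lipschitz continuity of $\nabla_y g$ in $x$ (whose modulus is furnished by the bound $\|\nabla_{xy}^2 g\|\le C_{gxy}$ of Assumption~\ref{assump:lower_level} via the mean-value inequality) bounds it by $L_g\|x_1-x_2\|\,\|y_1^\ast-y_2^\ast\|$. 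Dividing through by $\mu_g\|y_1^\ast-y_2^\ast\|$ (the case $y_1^\ast=y_2^\ast$ being trivial) produces the asserted Lipschitz constant $L_g/\mu_g$.

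I expect the main obstacle to be conceptual rather than computational: once the constraint is present, all the differentiation machinery is unavailable, and the entire weight of the argument rests on replacing it with the variational inequality and the ``add the two optimality conditions'' cancellation, which is precisely the device that converts strong convexity into a quadratic lower bound $\mu_g\|y_1^\ast-y_2^\ast\|^2$. The one technical point to handle carefully is the bookkeeping of which constant controls the $x$-variation of $\nabla_y g$: one must verify that the modulus appearing in the cross term (governed by $C_{gxy}$) is the quantity denoted $L_g$ in the statement, after which the division step is immediate and the bound $\|y^\ast(x_1)-y^\ast(x_2)\|\le (L_g/\mu_g)\|x_1-x_2\|$ follows.
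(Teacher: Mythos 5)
Your proposal is correct and takes essentially the same route as the paper: the paper writes the optimality conditions as inclusions $0\in\nabla_y g(x_i,y^*(x_i))+\partial\delta_{\mathcal{Y}}(y^*(x_i))$, picks the subgradients $h_i=-\nabla_y g(x_i,y^*(x_i))$ inside the strong-monotonicity inequality for $\partial\bigl(g(\tilde{x},\cdot)+\delta_{\mathcal{Y}}\bigr)$, and sets $\tilde{x}=x_1$, which yields exactly your key estimate $\mu_g\|y^*(x_1)-y^*(x_2)\|^2\le\langle\nabla_y g(x_2,y^*(x_2))-\nabla_y g(x_1,y^*(x_2)),\,y^*(x_1)-y^*(x_2)\rangle$ followed by Cauchy--Schwarz and the bound $L_g\|x_1-x_2\|$; your crossed variational inequalities are just the equivalent formulation of those inclusions. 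Even your caveat about constant bookkeeping is apt, since the paper likewise invokes $L_g$ for the $x$-variation of $\nabla_y g$ although Assumption \ref{assump:lower_level} states $L_g$-Lipschitzness only in $y$ (the natural modulus being $C_{gxy}$ via the mean-value inequality), so this issue is shared rather than introduced by your argument.
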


Then, using the Jacobian Chain Rule in \cite{clarke1990optimization} and Lemma \ref{lemma:Lip_y_star}, we have 
\begin{align}
\resizebox{0.9\linewidth}{!}{
    $
    \partial F(x)=\nabla_xf(x,y^*(x))+  (\partial y^*(x))^{\top}\nabla_yf(x,y^*(x)) $}.
\end{align}


To obtain the generalized gradient $\partial F(x)$, the most critical point is to calculate the generalized Jacobian $\partial y^*(x)$. Inspired by the previous methods \cite{blondel2022efficient,bertrand2020implicit,bertrand2022implicit}, we use the Corollary of the Jacobian Chain Rule in \cite{clarke1990optimization} to calculate the generalized Jacobian on Eqn \ref{eqn:y_update} on both sides, since both $y^*(x)$ and $\mathcal{P}_{\mathcal{Y}}(z^*)$ are Lipschitz continuous, where $z^*=y^*(x)-\eta \nabla_y g(x,y^*(x))$. Therefore, we can obtain 
\begin{equation}\label{eqn:partial_y_star}
    \partial y^*(x)v= \partial \mathcal{P}_{\mathcal{Y}}(z^*)\partial\left(y^*(x)-\eta\nabla_y g(x,y^*(x))\right)v.
\end{equation}
where $v\in\mathbb{R}^{d_1}$. Using the same method, we have 
\begin{align}\label{eqn:partial_y_star_v}
    &\partial\left(y^*(x)-\eta\nabla_y g(x,y^*(x))\right)v
    =\partial y^*(x)v\nonumber\\
    &-\eta   (\nabla_{yx}^2 g(x,y^*(x))- \nabla_{yy}^2g(x,y^*(x))\partial y^*(x))v.
\end{align}
Substituting Equation (\ref{eqn:partial_y_star_v}) into formula (\ref{eqn:partial_y_star}), we have 
\begin{align}
    \partial y^*(x)v=&\partial \mathcal{P}_{\mathcal{Y}}(z^*)(\partial y^*(x)v-\eta   (\nabla_{yx}^2 g(x,y^*(x))\nonumber\\
    &+ \nabla_{yy}^2g(x,y^*(x))\partial y^*(x))v).
\end{align}
Suppose there exist $A\in\partial y^*(x)$ and $H\in\partial\mathcal{P}_{\mathcal{Y}}(z^*)$, which makes the following equality hold,
\begin{align}
\resizebox{0.9\linewidth}{!}{
    $Av=H\cdot\left(A-\eta\nabla_{yx}^2 g(x,y^*(x))- \eta\nabla_{yy}^2g(x,y^*(x))A\right)v.$}
\end{align}
Then, rearranging the above equality, we have 
\begin{align}
    &\left[I_{d_2}-H\cdot\left(I_{d_2}- \eta\nabla_{yy}^2g(x,y^*(x))\right)\right]A\nonumber\\
    =&-\eta H\cdot\nabla_{yx}^2 g(x,y^*(x))
\end{align}
To solve the above equation, the challenge is to ensure that $I_{d_2}-H\cdot\left(I_{d_2}- \eta\nabla_{yy}^2g(x,y^*(x))\right)$ is invertible. Since $\mathcal{P}_{\mathcal{Y}}(z^*)$ is non-expensive, we have $\|H\|\leq 1$. Since $g$ is strongly convex, setting $\eta<{1}/{\mu_g}$, we have $\|I_{d_2}-\eta\nabla_{yy}^2g(x,y^*(x))\|< 1$. Therefore, we have $\|H\cdot\left(I_{d_2}- \eta\nabla_{yy}^2g(x,y^*(x))\right)\|< 1$ and $H(I_{d_2}- \eta\nabla_{yy}^2g(x,y^*(x)))$ is nonsingular. Therefore, we can obtain 
\begin{align}
   A
   =&-\eta\left[I_{d_2}-H(I_{d_2}-\eta\nabla_{yy}^2g(x,y^*(x)))\right]^{-1}\nonumber\\
   &\cdot H \nabla_{yx}^2g(x,y^*(x)) 
\end{align}
Then, substituting $A$ into the generalized gradient, we can obtain the following subset of the generalized gradient, 
\begin{align}
    \bar{\partial} F(x)=&\{h|h = \nabla_x f(x,y^*(x)) 
    -\eta \nabla_{xy}^2g(x,y^*(x)) H^{\top}\nonumber\\ 
    &\cdot\left[I_{d_2}-(I_{d_2}-\eta\nabla_{yy}^2g(x,y^*(x)))\cdot H^{\top}\right]^{-1}\nonumber\\
    &\cdot\nabla_yf(x,y^*(x)), H\in\partial\mathcal{P}_{\mathcal{Y}}(z^*) \}.
\end{align}
Obviously, we have $\bar{\partial} F(x)\subset \partial F(x)$. Note that we can find that the hypergradient used in \cite{blondel2022efficient,bertrand2020implicit,bertrand2022implicit} can be viewed as a specific element in our hypergradient. However, these methods need the projection operator to be differentiable, which is impractical in most cases. Therefore, we can conclude that our method can obtain the hypergradient in a more general case. 

After obtaining the hypergradient, our next step is to design an algorithm to find the point $x$ satisfying the condition 
\begin{align}
    \min \{\|h\|:h\in\bar{\partial} F(x) \}\leq \epsilon,
\end{align}
such that $x$ is a $\epsilon$-Clarke stationary point \cite{clarke1990optimization} which satisfies the condition 
\begin{align}
    \min \{\|h\|:h\in\partial F(x) \}\leq \epsilon.
\end{align}
However, \cite{zhang2020complexity} point out that finding an $\epsilon$ stationary points in nonsmooth nonconvex optimization can not be achieved by any finite-time algorithm given a fixed tolerance $\epsilon\in[0,1)$. This suggests the need to rethink the definition of stationary points.

Inspired by \cite{zhang2020complexity,lin2022gradient}, we propose to consider the following $\delta$-approximation generalized Jacobian.
\begin{definition}
    Given a point $z\in\mathbb{R}^{d_2}$ and $\delta>0$, the $\delta$-approximation generalized Jacobian of a Lipschitz function of $\mathcal{P}_{\mathcal{Y}}(\cdot)$ at $z$ is given by $\partial_{\delta}\mathcal{P}_{\mathcal{Y}}(z):=co\left(\bigcup_{z'\in\mathbb{B}_{\delta}(z)}\partial \mathcal{P}_{\mathcal{Y}}(z')\right)$. 
\end{definition}
The above approximation generalized Jacobian of $\mathcal{P}_{\mathcal{Y}}(\cdot)$ at $z$ is the convex hull of all generalized Jacobians at points in a $\delta$-ball around $z$, and it can be viewed as the extension of Goldstein subdifferential \cite{zhang2020complexity,lin2022gradient, goldstein1977optimization}. Then, substituting $\partial_{\delta}\mathcal{P}_{\mathcal{Y}}(z)$ into our hypergradient $\bar{\partial} F(x)$, we can obtain the following approximation,
\begin{align}\label{eqn:bar_partial_delta_F}
    \bar{\partial}_{\delta} F(x)=&\{h|h = \nabla_x f(x,y^*(x)) 
    -\eta \nabla_{xy}^2g(x,y^*(x)) H^{\top}\nonumber\\ 
    &\cdot\left[I_{d_2}-(I_{d_2}-\eta\nabla_{yy}^2g(x,y^*(x)))\cdot H^{\top}\right]^{-1}\nonumber\\
    &\cdot\nabla_yf(x,y^*(x)), H\in\partial_{\delta}\mathcal{P}_{\mathcal{Y}}(z^*) \}
\end{align}
Equipping with this hypergradient, even though we are unable to find an $\epsilon$-stationary point, one could hope to find a point that is close to an $\epsilon$-stationary point. This motivates us to adopt the following more refined notion
\begin{definition}
    A point $x$ is called $(\delta,\epsilon)$-stationary if $\min\left\{\|h\|:h\in\bar{\partial}_{\delta} F(x)\right\}\leq \epsilon$.
\end{definition}
Note that if we can find a point $x'$ at most distance $\delta$ away from $x$ such that $x'$ is $\epsilon$-stationary, then we know $x$ is $(\delta,\epsilon)$-stationary. However, the contrary is not true. We also have the following result
\begin{lemma}\label{lemma:converge_p_delta}
    The set $\partial_{\delta} \mathcal{P}_{\mathcal{Y}}(z)$ converges as $\delta\downarrow0$ as 
    $
        \lim_{\delta\downarrow0}\partial_{\delta} \mathcal{P}_{\mathcal{Y}}(z)=\partial \mathcal{P}_{\mathcal{Y}}(z)
    $.
\end{lemma}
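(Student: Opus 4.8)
The plan is to exploit the nestedness of the sets $\partial_{\delta}\mathcal{P}_{\mathcal{Y}}(z)$ in $\delta$ together with the upper semicontinuity of the Clarke generalized Jacobian. First I would observe that $\mathbb{B}_{\delta'}(z)\subseteq\mathbb{B}_{\delta}(z)$ whenever $\delta'\leq\delta$, so the union defining $\partial_{\delta}\mathcal{P}_{\mathcal{Y}}(z)$, and hence its convex hull, shrinks as $\delta$ decreases. Thus $\delta\mapsto\partial_{\delta}\mathcal{P}_{\mathcal{Y}}(z)$ is a monotonically decreasing (nested) family of sets, its limit as $\delta\downarrow0$ is well defined in the sense of set convergence, and it coincides with the intersection $\bigcap_{\delta>0}\partial_{\delta}\mathcal{P}_{\mathcal{Y}}(z)$. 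It therefore suffices to prove that this intersection equals $\partial\mathcal{P}_{\mathcal{Y}}(z)$, which I would do by a two-sided inclusion.

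The inclusion $\partial\mathcal{P}_{\mathcal{Y}}(z)\subseteq\bigcap_{\delta>0}\partial_{\delta}\mathcal{P}_{\mathcal{Y}}(z)$ is immediate: since $z\in\mathbb{B}_{\delta}(z)$ for every $\delta>0$, the set $\partial\mathcal{P}_{\mathcal{Y}}(z)$ appears in the union $\bigcup_{z'\in\mathbb{B}_{\delta}(z)}\partial\mathcal{P}_{\mathcal{Y}}(z')$ and is therefore contained in its convex hull for every $\delta$, hence in the intersection. The reverse inclusion is the crux and is where essentially all the work lies. Here I would invoke the fact that, because $\mathcal{P}_{\mathcal{Y}}$ is nonexpansive, the set-valued map $z'\mapsto\partial\mathcal{P}_{\mathcal{Y}}(z')$ is locally bounded (indeed $\|H\|\leq1$ for every $H\in\partial\mathcal{P}_{\mathcal{Y}}(z')$) and upper semicontinuous, which is a standard structural property of the Clarke Jacobian \cite{clarke1990optimization}. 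Upper semicontinuity says that for every $\varepsilon>0$ there exists $\delta>0$ such that $\partial\mathcal{P}_{\mathcal{Y}}(z')\subseteq\partial\mathcal{P}_{\mathcal{Y}}(z)+\varepsilon\mathbb{B}$ for all $z'\in\mathbb{B}_{\delta}(z)$, where $\mathbb{B}$ is the unit ball in the space of $d_2\times d_2$ matrices.

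Taking the union over $z'\in\mathbb{B}_{\delta}(z)$ and then the convex hull, and using that $\partial\mathcal{P}_{\mathcal{Y}}(z)+\varepsilon\mathbb{B}$ is itself convex (a sum of convex sets) so that convexification does not enlarge it, yields $\partial_{\delta}\mathcal{P}_{\mathcal{Y}}(z)\subseteq\partial\mathcal{P}_{\mathcal{Y}}(z)+\varepsilon\mathbb{B}$ for that $\delta$. Consequently $\bigcap_{\delta>0}\partial_{\delta}\mathcal{P}_{\mathcal{Y}}(z)\subseteq\partial\mathcal{P}_{\mathcal{Y}}(z)+\varepsilon\mathbb{B}$ for every $\varepsilon>0$, and since $\partial\mathcal{P}_{\mathcal{Y}}(z)$ is compact, and in particular closed and convex, letting $\varepsilon\downarrow0$ gives $\bigcap_{\varepsilon>0}\bigl(\partial\mathcal{P}_{\mathcal{Y}}(z)+\varepsilon\mathbb{B}\bigr)=\partial\mathcal{P}_{\mathcal{Y}}(z)$. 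Combining the two inclusions establishes the claim.

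The main obstacle is the reverse inclusion, and it rests entirely on the upper semicontinuity of the Clarke Jacobian; the monotonicity of the family and the forward inclusion are routine. Beyond invoking upper semicontinuity, the only points requiring care are verifying that $\partial\mathcal{P}_{\mathcal{Y}}(z)$ is a nonempty compact convex set (so that the closure/intersection step is valid), which follows from $\mathcal{P}_{\mathcal{Y}}$ being globally $1$-Lipschitz, and confirming that passing to the convex hull and adding the cushion $\varepsilon\mathbb{B}$ do not inflate the outer set.
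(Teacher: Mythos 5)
Your proof is correct and follows essentially the same route as the paper's: the crux (the reverse inclusion) is established in both via upper semicontinuity of the Clarke generalized Jacobian together with the observation that the Minkowski sum $\partial\mathcal{P}_{\mathcal{Y}}(z)+\varepsilon\mathbb{B}$ is convex, so taking convex hulls does not enlarge the outer set. The only cosmetic difference is the forward inclusion, where you use the trivial membership $z\in\mathbb{B}_{\delta}(z)$ while the paper invokes the characterization $\partial \mathcal{P}_{\mathcal{Y}}(z)=\bigcap_{\delta>0}\bigcup_{z'\in\mathbb{B}_{\delta}(z)}\partial \mathcal{P}_{\mathcal{Y}}(z')$ from \cite{clarke1990optimization}; both are valid.
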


This result enables an intuitive framework for transforming the non-asymptotic analysis of convergence to $(\delta,\epsilon)$-stationary points to classical asymptotic results for finding $\epsilon$-stationary points. Thus, we conclude that finding a $(\delta,\epsilon)$ stationary point is a reasonable optimality condition for the lower-level constrained bilevel optimization problem.

\subsection{Randomized Smoothing}

Inspired by the strong ability of randomized smoothing to deal with nonsmooth problems, in this subsection, we use this method to handle the non-smoothness of the projection operator.
Given a non-expansive projection operator \cite{moreau1965proximite} $\mathcal{P}_{\mathcal{Y}}(z)$ and uniform distribution $\mathbb{P}$ on a unit ball in $\ell_2$-norm, we define the smoothing function as $\mathcal{P}_{\mathcal{Y}\delta}(z)=\mathbb{E}_{u\sim\mathbb{P}}[\mathcal{P}_{\mathcal{Y}}(z+\delta u)]$. Then, we have the following proposition. 
\begin{proposition}\label{prop1}
    Let $\mathcal{P}_{\mathcal{Y}\delta}(z)=\mathbb{E}_{u\sim\mathbb{P}}[\mathcal{P}_{\mathcal{Y}}(z+\delta u)]$ where $\mathbb{P}$ is a uniform distribution on a unit ball in $\ell_2$-norm. Since that $\mathcal{P}_{\mathcal{Y}}$ is non-expansive and each element of $\mathcal{P}_{\mathcal{Y}}$ is Lipschitz continuous with constant $L_{p}$, we have
    $\mathcal{P}_{\mathcal{Y}\delta}(z)$ is differentiable and $1$-Lipschitz continuous with ${cd_2L_p}/{\delta}$-Lipschitz gradient, where $c>0$ is constant.
\end{proposition}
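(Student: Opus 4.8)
The plan is to use that randomized smoothing acts coordinate-wise and that the ball-averaging of a Lipschitz map admits a surface-integral (divergence-theorem) representation of its Jacobian; the two claimed bounds then follow at the level of the full vector-valued map, with the two hypotheses playing distinct roles: coordinate-wise $L_p$-Lipschitzness is what makes each smoothed coordinate differentiable, while non-expansiveness of $\mathcal{P}_{\mathcal{Y}}$ is what keeps both the Lipschitz constant and the gradient-Lipschitz constant free of spurious dimension factors. Writing $\mathcal{P}_{\mathcal{Y}}=([\mathcal{P}_{\mathcal{Y}}]_1,\dots,[\mathcal{P}_{\mathcal{Y}}]_{d_2})$ and using linearity of the expectation, the $i$-th coordinate of $\mathcal{P}_{\mathcal{Y}\delta}$ is exactly the ball-smoothing $\mathbb{E}_{u\sim\mathbb{P}}[[\mathcal{P}_{\mathcal{Y}}]_i(z+\delta u)]$ of the scalar $L_p$-Lipschitz function $[\mathcal{P}_{\mathcal{Y}}]_i$, so the whole argument reduces to a one-dimensional smoothing lemma followed by a reassembly step.

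The technical heart is establishing differentiability together with the gradient formula. First I would treat the case where each coordinate is $C^1$: differentiating $G(z)=\int_{\|v\|\le\delta}[\mathcal{P}_{\mathcal{Y}}]_i(z+v)\,dv$ under the integral sign and applying the divergence theorem $\int_{B}\nabla h=\int_{\partial B}h\,n$ converts the volume integral into a surface integral over the sphere of radius $\delta$, whose outward normal is $v/\delta$. After rescaling onto the unit sphere $\mathbb{S}^{d_2-1}$ and stacking the coordinates, this yields the Jacobian representation
\begin{align}
J\mathcal{P}_{\mathcal{Y}\delta}(z)=\frac{d_2}{\delta}\,\mathbb{E}_{w\sim\mathrm{Unif}(\mathbb{S}^{d_2-1})}\big[\mathcal{P}_{\mathcal{Y}}(z+\delta w)\,w^{\top}\big].\nonumber
\end{align}
For a merely Lipschitz (non-differentiable) $\mathcal{P}_{\mathcal{Y}}$ I would pass to this formula by mollifying $\mathcal{P}_{\mathcal{Y}}$ into $C^{\infty}$ approximations with uniformly bounded Lipschitz constants, applying the identity, and taking limits; since the right-hand side above is continuous in $z$ (by continuity of $\mathcal{P}_{\mathcal{Y}}$ and dominated convergence), the limit of the gradients exists and is continuous, which simultaneously proves that $\mathcal{P}_{\mathcal{Y}\delta}$ is $C^1$ and identifies its Jacobian. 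I expect this differentiability step to be the main obstacle, because it is the only place where the specific structure of the uniform-ball kernel (and not an arbitrary mollifier) is used and where the non-smoothness of $\mathcal{P}_{\mathcal{Y}}$ must be handled carefully.

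With the representation in hand, the two remaining claims are short. For $1$-Lipschitzness I would \emph{not} combine coordinate constants (that would cost a factor $\sqrt{d_2}L_p$); instead Jensen's inequality and non-expansiveness give it directly, since $\|\mathcal{P}_{\mathcal{Y}\delta}(z)-\mathcal{P}_{\mathcal{Y}\delta}(z')\|\le\mathbb{E}_u\|\mathcal{P}_{\mathcal{Y}}(z+\delta u)-\mathcal{P}_{\mathcal{Y}}(z'+\delta u)\|\le\mathbb{E}_u\|z-z'\|=\|z-z'\|$. For the Lipschitz-gradient claim I would subtract the Jacobian representations at $z$ and $z'$ and bound the spectral norm using $\|ab^{\top}\|=\|a\|\,\|b\|$ and $\|w\|=1$, obtaining $\|J\mathcal{P}_{\mathcal{Y}\delta}(z)-J\mathcal{P}_{\mathcal{Y}\delta}(z')\|\le\frac{d_2}{\delta}\,\mathbb{E}_w\|\mathcal{P}_{\mathcal{Y}}(z+\delta w)-\mathcal{P}_{\mathcal{Y}}(z'+\delta w)\|\le\frac{d_2}{\delta}\|z-z'\|$, which is of the claimed order $cd_2L_p/\delta$ (here non-expansiveness makes each coordinate $1$-Lipschitz, so $L_p$ may be taken to be $1$). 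If one prefers to route the bound through the coordinate-wise constant $L_p$ explicitly rather than through non-expansiveness, the same conclusion follows by applying the scalar gradient-Lipschitz estimate (with its sharp $\sqrt{d_2}$ dependence) to each of the $d_2$ rows and bounding the spectral norm of the Jacobian difference by its Frobenius norm; the extra $\sqrt{d_2}$ from the Frobenius assembly then combines with the $\sqrt{d_2}$ in the scalar bound to produce exactly $cd_2L_p/\delta$.
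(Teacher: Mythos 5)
Your proposal is correct, but its technical core runs along a different route than the paper's. The parts that coincide are the coordinate-wise reduction and the $1$-Lipschitz claim (the paper proves the latter exactly as you do, via Jensen's inequality and non-expansiveness). Where you diverge is in how differentiability and the gradient-Lipschitz constant are obtained: the paper does not prove these from scratch but invokes Theorem 3.1 of the cited work of Lin et al.\ coordinate-by-coordinate, which gives $\nabla\mathcal{P}_{\mathcal{Y}\delta}^{i}(z)=\mathbb{E}[\nabla\mathcal{P}_{\mathcal{Y}}^{i}(z+\delta u)]$ (the expectation of the a.e.-defined gradient) together with the scalar estimate $\|\nabla\mathcal{P}_{\mathcal{Y}\delta}^i(z)-\nabla\mathcal{P}_{\mathcal{Y}\delta}^i(z')\|\leq \frac{cL_p\sqrt{d_2}}{\delta}\|z-z'\|$, and then assembles the $d_2$ rows through the bound of the spectral norm by the Frobenius norm to obtain $\frac{cd_2L_p}{\delta}$ --- this is precisely the ``alternative'' route you sketch at the end. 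Your primary route instead derives a closed-form surface-integral representation
\begin{equation}
J\mathcal{P}_{\mathcal{Y}\delta}(z)=\frac{d_2}{\delta}\,\mathbb{E}_{w}\bigl[\mathcal{P}_{\mathcal{Y}}(z+\delta w)\,w^{\top}\bigr],\quad w\sim\mathrm{Unif}(\mathbb{S}^{d_2-1}),
\end{equation}
via the divergence theorem plus a mollification limit, and then bounds $\|J\mathcal{P}_{\mathcal{Y}\delta}(z)-J\mathcal{P}_{\mathcal{Y}\delta}(z')\|$ directly by Jensen, $\|ab^{\top}\|=\|a\|\|b\|$, and non-expansiveness, getting $\frac{d_2}{\delta}\|z-z'\|$ with no Frobenius detour. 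What each approach buys: the paper's is shorter and offloads the genuinely delicate analysis (differentiability of the smoothing of a nonsmooth function) to a citation, at the cost of an unspecified universal constant $c$ and a gradient formula that requires Rademacher's theorem to even state; yours is self-contained, identifies the Jacobian explicitly (a representation that is also the natural justification for the sphere-based finite-difference estimator $\bar H$ the paper uses later), and yields the spectral-norm Lipschitz constant with explicit constant $1$ once $L_p$ is normalized by non-expansiveness. The only step in your argument requiring care --- passing the divergence-theorem identity through a mollification limit --- is handled correctly by your locally uniform convergence of both the smoothed functions and their gradients, so no gap remains.
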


Using this randomized smoothing function to replace the approximation generalized Jacobian in Eqn (\ref{eqn:bar_partial_delta_F}), we can approximate the hypergradient as follows,
\begin{align}
    &\nabla F_{\delta}(x)\nonumber\\
    =&\nabla_x f(x,y^*(x))- \eta \nabla_{xy}^2g(x,y^*(x)) \nabla\mathcal{P}_{\mathcal{Y}\delta}(z^*)^{\top}\nonumber\\
    &\cdot\left[I_{d_2}-(I_{d_2}-\eta\nabla_{yy}^2g(x,y^*(x)))\nabla\mathcal{P}_{\mathcal{Y}\delta}(z^*)^{\top}\right]^{-1}\nonumber\\
    &\cdot\nabla_yf(x,y^*(x)).\nonumber
\end{align}

For this hypergradient estimation, we have the following conclusion.
\begin{lemma}\label{lemma:lip_f_x}
    Under Assumptions \ref{assump:upper_level}, \ref{assump:lower_level}, we have $\nabla F_{\delta}(x)$ is Lipschitz continuous w.r.t $x$.
\end{lemma}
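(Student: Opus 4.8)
The plan is to recognize that $\nabla F_{\delta}(x)$ is a finite sum and product of matrix- and vector-valued functions of $x$, each of which is both uniformly bounded and Lipschitz continuous, and then to invoke the elementary fact that sums and products of bounded Lipschitz maps remain Lipschitz. The only tool needed repeatedly is the product estimate $\|A(x_1)B(x_1)-A(x_2)B(x_2)\|\le \|A(x_1)\|\,\|B(x_1)-B(x_2)\|+\|A(x_1)-A(x_2)\|\,\|B(x_2)\|$, together with the resolvent identity for the one matrix-inverse factor. So the entire proof reduces to tabulating, for each factor, a uniform norm bound and a Lipschitz modulus, and then composing these bounds.

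First I would control the inner arguments. By Lemma~\ref{lemma:Lip_y_star}, $y^*(x)$ is Lipschitz with constant $L_g/\mu_g$, so $x\mapsto(x,y^*(x))$ is Lipschitz; combined with Assumption~\ref{assump:lower_level} (the bound $\|\nabla_{xy}^2 g\|\le C_{gxy}$ gives Lipschitzness of $\nabla_y g$ in $x$, and $L_g$-Lipschitzness in $y$), the point $z^*=y^*(x)-\eta\nabla_y g(x,y^*(x))$ is a Lipschitz function of $x$, with some modulus $L_z$. I would then dispatch the elementary factors in turn. The terms $\nabla_x f(x,y^*(x))$ and $\nabla_y f(x,y^*(x))$ are Lipschitz by Assumption~\ref{assump:upper_level} composed with the Lipschitz map $x\mapsto(x,y^*(x))$, and $\|\nabla_y f\|\le C_{fy}$; the factor $\nabla_{xy}^2 g(x,y^*(x))$ is bounded by $C_{gxy}$ and Lipschitz since $\nabla_{xy}^2 g$ is $L_{gxy}$-Lipschitz; the factor $\nabla_{yy}^2 g(x,y^*(x))$ is bounded by $L_g$ and Lipschitz since $\nabla_{yy}^2 g$ is $L_{gyy}$-Lipschitz; and by Proposition~\ref{prop1} we have $\|\nabla\mathcal{P}_{\mathcal{Y}\delta}\|\le 1$ with $\nabla\mathcal{P}_{\mathcal{Y}\delta}$ being $(cd_2 L_p/\delta)$-Lipschitz, so $x\mapsto\nabla\mathcal{P}_{\mathcal{Y}\delta}(z^*)$ is Lipschitz with modulus $(cd_2 L_p/\delta)L_z$.

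The main obstacle is the inverse factor $M(x)^{-1}$, where $M(x)=I_{d_2}-(I_{d_2}-\eta\nabla_{yy}^2 g(x,y^*(x)))\nabla\mathcal{P}_{\mathcal{Y}\delta}(z^*)^{\top}$. I would first establish uniform invertibility exactly as in the derivation of $\bar{\partial}F(x)$: since $\|\nabla\mathcal{P}_{\mathcal{Y}\delta}^{\top}\|\le 1$ and $\|I_{d_2}-\eta\nabla_{yy}^2 g\|<1$ for the prescribed $\eta$, setting $\rho:=\|(I_{d_2}-\eta\nabla_{yy}^2 g)\nabla\mathcal{P}_{\mathcal{Y}\delta}^{\top}\|<1$, the Neumann argument yields $\|M(x)^{-1}\|\le (1-\rho)^{-1}$ uniformly in $x$. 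For Lipschitzness I would use $M(x_1)^{-1}-M(x_2)^{-1}=M(x_1)^{-1}(M(x_2)-M(x_1))M(x_2)^{-1}$, giving $\|M(x_1)^{-1}-M(x_2)^{-1}\|\le (1-\rho)^{-2}\|M(x_1)-M(x_2)\|$; and $M$ itself is Lipschitz as a product of the bounded Lipschitz factors already treated. Assembling, $\nabla F_{\delta}(x)$ is the sum of the Lipschitz term $\nabla_x f$ and the product $\eta\,\nabla_{xy}^2 g\cdot\nabla\mathcal{P}_{\mathcal{Y}\delta}^{\top}\cdot M^{-1}\cdot\nabla_y f$ of four bounded Lipschitz factors, so iterating the product estimate yields a finite Lipschitz constant expressed as a sum of products of the individual bounds and moduli. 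I expect the inverse step to be the crux, because the uniform bound on $\|M^{-1}\|$ relies on the interplay between non-expansiveness of $\nabla\mathcal{P}_{\mathcal{Y}\delta}$ (Proposition~\ref{prop1}) and strong convexity ($\rho<1$), and the resulting constant inherits the $d_2/\delta$ blow-up from the smoothed-projection gradient; everything else is a mechanical application of the product and composition rules for bounded Lipschitz maps.
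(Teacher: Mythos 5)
Your proposal is correct and takes essentially the same approach as the paper: it rests on the same ingredients, namely bounded-Lipschitz product estimates for each factor of the hypergradient, the Neumann-type bound $\left\|\left[I_{d_2}-(I_{d_2}-\eta\nabla_{yy}^2g)\nabla\mathcal{P}_{\mathcal{Y}\delta}^{\top}\right]^{-1}\right\|\le 1/(\eta\mu_g)$, the resolvent identity $\|H_2^{-1}-H_1^{-1}\|\le\|H_1^{-1}\|\|H_1-H_2\|\|H_2^{-1}\|$ for increments of the inverse, Proposition \ref{prop1} for the smoothed projection, and Lemma \ref{lemma:Lip_y_star} for $y^*(x)$. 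The only difference is bookkeeping: the paper (via Lemma \ref{lemma:lip_f_y}) first derives separate partial Lipschitz moduli $L_1(d_2/\delta)$ in $y$ and $L_2(d_2/\delta)$ in $x$ for $\nabla f_{\delta}(x,y)$ and then combines them through the splitting $\nabla f_{\delta}(x_1,y^*(x_1))-\nabla f_{\delta}(x_1,y^*(x_2))+\nabla f_{\delta}(x_1,y^*(x_2))-\nabla f_{\delta}(x_2,y^*(x_2))$, whereas you compose all factors directly as functions of $x$; both yield a constant of the same form $\frac{L_g}{\mu_g}L_1(\frac{d_2}{\delta})+L_2(\frac{d_2}{\delta})$ with the same $d_2/\delta$ blow-up from the smoothed-projection gradient.
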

Lemma \ref{lemma:lip_f_x} indicates that we can use the traditional analysis framework to discuss the convergence performance using $\nabla F_{\delta}(x)$. However, as we discussed above, our purpose is to find the $(\delta,\epsilon)$-stationary points of LCBO. Therefore, a new challenge arises, namely, to show the relation between $\nabla F_{\delta}(x)$ and $\bar{\partial}_{\delta}F(x)$. Here, we first give the relation between the $\delta$-approximation generalized Jacobian and the Jacobian of the randomized smoothing function as follows.
\begin{proposition}\label{prop2}
    We have $\nabla\mathcal{P}_{\mathcal{Y}\delta}(z)\in\partial_{\delta} \mathcal{P}_{\mathcal{Y}}(z)$ for any $z\in\mathbb{R}^{d_2}$.
\end{proposition}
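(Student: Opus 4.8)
The plan is to show that the Jacobian of the smoothed operator is literally an average of Clarke Jacobians of $\mathcal{P}_{\mathcal{Y}}$ evaluated at points of the $\delta$-ball around $z$, and then invoke the elementary fact that an average (expectation) of points drawn from a set lies in its convex hull. Concretely, I would first establish the representation
\begin{align}
\nabla\mathcal{P}_{\mathcal{Y}\delta}(z)=\mathbb{E}_{u\sim\mathbb{P}}\left[J\mathcal{P}_{\mathcal{Y}}(z+\delta u)\right],\nonumber
\end{align}
where $J\mathcal{P}_{\mathcal{Y}}$ denotes the ordinary $d_2\times d_2$ Jacobian, which exists almost everywhere by Rademacher's theorem since $\mathcal{P}_{\mathcal{Y}}$ is Lipschitz.

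To justify moving the Jacobian inside the expectation, I would write $\mathcal{P}_{\mathcal{Y}\delta}(z)=\frac{1}{V}\int_{\|u\|\le 1}\mathcal{P}_{\mathcal{Y}}(z+\delta u)\,du$, with $V$ the volume of the unit ball, and differentiate under the integral sign (the domain of integration here does not depend on $z$). Two facts make this rigorous. First, let $\Omega$ be the Lebesgue-null set on which $\mathcal{P}_{\mathcal{Y}}$ fails to be differentiable; since $\mathbb{P}$ is the uniform distribution on the ball, it is absolutely continuous with respect to Lebesgue measure, so $z+\delta u\notin\Omega$ for $\mathbb{P}$-almost every $u$, i.e. the integrand is differentiable almost surely. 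Second, the difference quotients $\frac{1}{h}\left(\mathcal{P}_{\mathcal{Y}}(z+\delta u+h e_j)-\mathcal{P}_{\mathcal{Y}}(z+\delta u)\right)$ are uniformly bounded by the Lipschitz constant $L_p$ of Proposition \ref{prop1} and converge almost surely to the columns of $J\mathcal{P}_{\mathcal{Y}}(z+\delta u)$; the dominated convergence theorem then yields the displayed representation.

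With the representation in hand, note that at every differentiability point $z'=z+\delta u$ one has $J\mathcal{P}_{\mathcal{Y}}(z')\in\partial\mathcal{P}_{\mathcal{Y}}(z')$, and since $\|z'-z\|=\delta\|u\|\le\delta$, the matrix $J\mathcal{P}_{\mathcal{Y}}(z+\delta u)$ belongs to $S:=\bigcup_{z'\in\mathbb{B}_{\delta}(z)}\partial\mathcal{P}_{\mathcal{Y}}(z')$ for $\mathbb{P}$-almost every $u$. It then remains to show that the expectation of a random matrix lying almost surely in $S$ belongs to $co(S)=\partial_{\delta}\mathcal{P}_{\mathcal{Y}}(z)$. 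I would prove this by a separating hyperplane argument in the space of $d_2\times d_2$ matrices equipped with the trace inner product $\langle A,B\rangle=\mathrm{tr}(A^{\top}B)$: if $\nabla\mathcal{P}_{\mathcal{Y}\delta}(z)$ were outside the closed, bounded convex hull of $S$, there would exist a matrix $W$ with $\langle W,\nabla\mathcal{P}_{\mathcal{Y}\delta}(z)\rangle>\sup_{A\in S}\langle W,A\rangle$; but linearity of expectation gives $\langle W,\nabla\mathcal{P}_{\mathcal{Y}\delta}(z)\rangle=\mathbb{E}_{u}\left[\langle W,J\mathcal{P}_{\mathcal{Y}}(z+\delta u)\rangle\right]\le\sup_{A\in S}\langle W,A\rangle$, a contradiction, which establishes the claim.

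The main obstacle I anticipate is the rigorous justification of the interchange of differentiation and expectation together with the measure-theoretic bookkeeping it requires (Rademacher differentiability a.e., absolute continuity of $\mathbb{P}$ to avoid the null set, and the dominated convergence bound supplied by the Lipschitz constant $L_p$). The separating hyperplane step additionally places $\nabla\mathcal{P}_{\mathcal{Y}\delta}(z)$ in the \emph{closed} convex hull of $S$; because all Jacobians are bounded in norm by $L_p$, the relevant sets are bounded, so this causes no genuine difficulty, but I would note that the convex hull in the definition of $\partial_{\delta}\mathcal{P}_{\mathcal{Y}}$ is understood in the closed sense, consistent with the Goldstein and Clarke conventions referenced earlier.
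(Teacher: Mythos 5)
Your proof is correct, and it takes a genuinely different, more self-contained route than the paper. The paper disposes of the proposition in two lines: it invokes Theorem 3.1 of Lin et al.\ componentwise to get $\nabla\mathcal{P}_{\mathcal{Y}\delta}^i(z)\in\partial_{\delta}\mathcal{P}_{\mathcal{Y}}^i(z)$ for each scalar coordinate function, and then asserts the matrix statement with a ``therefore.'' You instead prove the matrix-level identity $\nabla\mathcal{P}_{\mathcal{Y}\delta}(z)=\mathbb{E}_{u}\left[J\mathcal{P}_{\mathcal{Y}}(z+\delta u)\right]$ directly (Rademacher differentiability a.e., absolute continuity of $\mathbb{P}$, dominated convergence with the Lipschitz bound), note that $J\mathcal{P}_{\mathcal{Y}}(z+\delta u)\in\partial\mathcal{P}_{\mathcal{Y}}(z+\delta u)\subseteq S:=\bigcup_{z'\in\mathbb{B}_{\delta}(z)}\partial\mathcal{P}_{\mathcal{Y}}(z')$ almost surely, and push the expectation into the convex hull by a separating-hyperplane argument. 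This is not merely stylistic: the paper's componentwise step is lossy. By Clarke's product rule (Proposition 2.6.2(e) in the cited book), $\partial\mathcal{P}_{\mathcal{Y}}(z')$ is contained in, and in general strictly smaller than, the product $\partial\mathcal{P}_{\mathcal{Y}}^1(z')\times\cdots\times\partial\mathcal{P}_{\mathcal{Y}}^{d_2}(z')$; so knowing that each row lies in $co\bigl(\bigcup_{z'}\partial\mathcal{P}_{\mathcal{Y}}^i(z')\bigr)$ only places the matrix in the product of the row-wise hulls, which is a superset of $\partial_{\delta}\mathcal{P}_{\mathcal{Y}}(z)$ --- a priori each row could be realized by different convex weights at different nearby points. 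Your argument closes exactly this gap, because the single expectation $\mathbb{E}_u$ couples all the rows: they are averaged with the same weights at the same points. In that sense your proof is the rigorous version of what the paper's citation-based proof is implicitly relying on.

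One refinement to your final caveat: you do not need to reinterpret the paper's $co(\cdot)$ as a closed convex hull. Since $\mathcal{P}_{\mathcal{Y}}$ is globally Lipschitz, the Clarke Jacobian map is compact-valued, locally bounded, and upper semicontinuous, so the union $S$ over the closed ball $\mathbb{B}_{\delta}(z)$ is compact; by Carath\'eodory's theorem its convex hull in the finite-dimensional space of $d_2\times d_2$ matrices is then itself compact, hence closed. Consequently the closed convex hull produced by your separating-hyperplane step coincides with $co(S)=\partial_{\delta}\mathcal{P}_{\mathcal{Y}}(z)$ exactly as the paper defines it, and your proof goes through without modifying any convention.
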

Using this result, we have $\nabla F_{\delta}(x)\in\bar{\partial}_{\delta}F(x)$. This resolves an important question and forms the basis for analyzing our method, which means that once we find a point satisfying the condition $\|\nabla F_{\delta}(x)\|\leq \epsilon$, then it is a $(\delta, \epsilon)$-stationary point.

\subsection{ Approximation of Hypergradient}


In this subsection, we discuss how to approximate the hypergradient $\nabla F_{\delta}(x)$ in an efficient method. 

Since obtaining the optimal solution $y^*(x)$ is usually time-consuming,  one proper method is to replace $y^*(x)$ with $y$ as an approximation as follows,
\begin{align}
    &\nabla f_{\delta}(x,y)
    =\nabla_x f(x,y)- \eta \nabla_{xy}^2g(x,y) \nabla\mathcal{P}_{\mathcal{Y}\delta}(z)^{\top}\nonumber\\
    &\cdot\left[I_{d_2}-(I_{d_2}-\eta\nabla_{yy}^2g(x,y))\nabla\mathcal{P}_{\mathcal{Y}\delta}(z)^{\top}\right]^{-1}\nabla_yf(x,y).\nonumber
\end{align}
where $z=y-\eta \nabla_y g(x,y)$. Then, we can use the Neumann series \cite{ghadimi2018approximation,meyer2000matrix} to efficiently approximate the matrix inverse, since we have $\|(I_{d_2}-\eta\nabla_{yy}^2g(x,y))\nabla\mathcal{P}_{\mathcal{Y}\delta}(z)^{\top}\|< 1$ if $\eta<1/\mu_g$. Therefore, we can obtain the approximation of the hypergradient as follows,
\begin{align}
    &\bar{\nabla} f_{\delta}(x,y)
    =\nabla_x f(x,y)- \eta  \nabla_{xy}^2g(x,y) \nabla\mathcal{P}_{\mathcal{Y}\delta}(z)^{\top}\nonumber\\
    &\cdot \sum_{i=0}^{Q-1}\left((I_{d_2}-\eta\nabla_{yy}^2g(x,y))\nabla\mathcal{P}_{\mathcal{Y}\delta}(z)^{\top}\right)^i\nabla_yf(x,y).\nonumber
\end{align}

In addition, since calculating $\nabla \mathcal{P}_{\mathcal{Y\delta}}(z)$ is impractical,  we can use the following unbiased estimator of the gradient $\nabla \mathcal{P}_{\mathcal{Y\delta}}(z)$ as a replacement,
\begin{align}
	\bar{H}(z;u)=\sum_{i=1}^{d_2}\frac{1}{2\delta}\left( \mathcal{P}_{\mathcal{Y}}(z+\delta u_i)- \mathcal{P}_{\mathcal{Y}}(z-\delta u_i) \right)u_i^{\top},\nonumber
\end{align}
where $u_i\sim\mathbb{P}$. Note that, in the program, we can calculate $\bar{H}$ in parallel, thereby reducing the actual running time. For $\bar{H}(z;u)$, we have the following conclusion.
\begin{lemma}\label{lemma:bound_of_nablaP}
    We have  $\mathbb{E}\left[\bar{H}(z;u)\right]=\nabla\mathcal{P}_{\mathcal{Y}\delta}(z)$ and $\mathbb{E}\left[\left\|\bar{H}(z;u)- \nabla\mathcal{P}_{\mathcal{Y}\delta}(z)\right\|^2\right]\leq16\sqrt{2\pi}d_2L_p^2
    $.
\end{lemma}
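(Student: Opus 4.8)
The plan is to treat the two claims separately, exploiting the fact that $\bar{H}(z;u)$ is a sum of $d_2$ independent, identically distributed terms $\xi_i := \frac{1}{2\delta}\bigl(\mathcal{P}_{\mathcal{Y}}(z+\delta u_i) - \mathcal{P}_{\mathcal{Y}}(z-\delta u_i)\bigr)u_i^{\top}$, one per sample $u_i\sim\mathbb{P}$. For unbiasedness I would first reduce the central difference to a one-sided quantity: since $\mathbb{P}$ is symmetric under $u\mapsto -u$, the change of variable $u\mapsto -u$ gives, entrywise, $\mathbb{E}[\xi_i] = \frac{1}{\delta}\mathbb{E}_{u}\bigl[\mathcal{P}_{\mathcal{Y}}(z+\delta u)\,u^{\top}\bigr]$. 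I would then invoke a Stein/divergence-type identity for the smoothing distribution, together with the second-moment normalization $\mathbb{E}[uu^{\top}] = \tfrac{1}{d_2}I_{d_2}$ of $\mathbb{P}$, to show that each single term is an unbiased estimate of $\tfrac{1}{d_2}\nabla\mathcal{P}_{\mathcal{Y}\delta}(z)$; the gradient $\nabla\mathcal{P}_{\mathcal{Y}\delta}$ is well defined by Proposition \ref{prop1}. Summing the $d_2$ independent copies then yields $\mathbb{E}[\bar{H}(z;u)] = \nabla\mathcal{P}_{\mathcal{Y}\delta}(z)$, the dimensional factor $d_2$ cancelling exactly against the $\tfrac{1}{d_2}$ from the second moment.

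For the variance bound I would use independence to pass from the whole sum to a single term. Because the $\xi_i - \mathbb{E}[\xi_i]$ are independent and mean-zero, in Frobenius norm $\mathbb{E}\|\bar{H}(z;u)-\nabla\mathcal{P}_{\mathcal{Y}\delta}(z)\|_F^2 = \sum_{i=1}^{d_2}\mathbb{E}\|\xi_i-\mathbb{E}[\xi_i]\|_F^2 \le d_2\,\mathbb{E}\|\xi_1\|_F^2$, and I would bound the spectral norm appearing in the statement by the Frobenius norm. It then remains to control $\mathbb{E}\|\xi_1\|_F^2 = \frac{1}{4\delta^2}\mathbb{E}\bigl[\|\mathcal{P}_{\mathcal{Y}}(z+\delta u)-\mathcal{P}_{\mathcal{Y}}(z-\delta u)\|^2\,\|u\|^2\bigr]$. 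Here I would combine the componentwise $L_p$-Lipschitz continuity of $\mathcal{P}_{\mathcal{Y}}$, which gives $\|\mathcal{P}_{\mathcal{Y}}(z+\delta u)-\mathcal{P}_{\mathcal{Y}}(z-\delta u)\|\le 2\delta L_p\|u\|$ (and, where sharper, the non-expansiveness of the full operator), with explicit moment estimates of $\|u\|$ under $\mathbb{P}$, producing a bound of the form $\mathbb{E}\|\xi_1\|_F^2 \le C L_p^2$. The numerical constant and the factor $\sqrt{2\pi}$ arise from the normalization constants of the smoothing distribution (Gamma-function / surface-to-volume ratios), exactly as in the constant $c$ of Proposition \ref{prop1}; multiplying by the $d_2$ from the independence step then delivers the claimed $16\sqrt{2\pi}\,d_2 L_p^2$.

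The main obstacle is the exact unbiasedness in the first claim: establishing the Stein/divergence identity in the correct normalization so that a single term contributes precisely $\tfrac{1}{d_2}\nabla\mathcal{P}_{\mathcal{Y}\delta}(z)$ rather than a differently weighted smoothing of the Jacobian. This requires the symmetry of $\mathbb{P}$ and the identity $\mathbb{E}[uu^{\top}]=\tfrac{1}{d_2}I_{d_2}$ to interact cleanly with the definition $\mathcal{P}_{\mathcal{Y}\delta}(z)=\mathbb{E}_u[\mathcal{P}_{\mathcal{Y}}(z+\delta u)]$, and care is needed so that the integration-by-parts boundary terms vanish and the $d_2$ introduced by summing cancels the $\tfrac{1}{d_2}$ from the second moment. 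By contrast, the variance step is comparatively routine once one elects to bound the spectral norm by the Frobenius norm and to track the moment constants of $\mathbb{P}$ carefully.
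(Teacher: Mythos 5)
Your plan for the first claim (unbiasedness) is viable and is, up to bookkeeping, the paper's own argument: the paper decomposes $\bar{H}$ row by row, rewrites each row as $\frac{1}{d_2}\sum_{i=1}^{d_2}\frac{d_2}{2\delta}\bigl(\mathcal{P}_{\mathcal{Y}}^j(z+\delta u_i)-\mathcal{P}_{\mathcal{Y}}^j(z-\delta u_i)\bigr)u_i^{\top}$, and cites Theorem 3.1 of \cite{lin2022gradient} for each scalar component; the content of that cited result is exactly the divergence-type identity you propose to re-derive, and your per-sample factor $\tfrac{1}{d_2}$ cancelling against the sum over $d_2$ samples is the same cancellation. (One caveat: the normalization $\mathbb{E}[uu^{\top}]=\tfrac{1}{d_2}I_{d_2}$ and the vanishing boundary terms require the $u_i$ in the estimator to be uniform on the unit \emph{sphere}, as in \cite{lin2022gradient}, while the smoothing average defining $\mathcal{P}_{\mathcal{Y}\delta}$ is over the unit ball; these are different distributions and the identity fails if both are taken over the ball.)

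The genuine gap is in the variance step. Your claimed per-term bound $\mathbb{E}\|\xi_1\|_F^2\le C L_p^2$ with $C$ a numerical constant does not follow from the estimates you invoke. Componentwise $L_p$-Lipschitzness gives $\|\mathcal{P}_{\mathcal{Y}}(z+\delta u)-\mathcal{P}_{\mathcal{Y}}(z-\delta u)\|\le 2\delta L_p\sqrt{d_2}\,\|u\|$, not $2\delta L_p\|u\|$, hence $\mathbb{E}\|\xi_1\|_F^2\le d_2L_p^2$, and after summing the $d_2$ independent terms you obtain $O(d_2^2L_p^2)$ --- off by a factor of $d_2$ from the stated $16\sqrt{2\pi}d_2L_p^2$ (a factor that matters, since it propagates through $\sigma_f(d_2)$ into the final $\tilde{\mathcal{O}}(d_2^2\epsilon^{-4})$ complexity). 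Falling back on non-expansiveness of the full operator instead gives $\mathbb{E}\|\xi_1\|_F^2\le 1$ and a total of $d_2$, which has the right dimension dependence but does not imply the stated inequality when $L_p<(16\sqrt{2\pi})^{-1/2}$. The missing idea is concentration of measure for Lipschitz functions on the sphere: writing $\mathcal{P}_{\mathcal{Y}}^j(z\pm\delta u)-\mathbb{E}[\mathcal{P}_{\mathcal{Y}}^j(z+\delta u)]$ and using L\'evy/Gaussian-type concentration, the squared difference gains a factor $1/d_2$ over the naive $(2\delta L_p\|u\|)^2$ bound, and this is precisely where $16\sqrt{2\pi}$ comes from --- it is a concentration constant, not a Gamma-function/normalization constant of the smoothing distribution as you suggest. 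The paper sidesteps this entire difficulty by citing the per-component second-moment bound $\mathbb{E}\bigl[\|\tfrac{d_2}{2\delta}(\mathcal{P}_{\mathcal{Y}}^j(z+\delta u_i)-\mathcal{P}_{\mathcal{Y}}^j(z-\delta u_i))u_i^{\top}\|^2\bigr]\le 16\sqrt{2\pi}d_2L_p^2$ from \cite{lin2022gradient}; without that citation or an equivalent concentration argument, your plan does not reach the stated bound. (Your independence-based exact additivity of the Frobenius-norm variance across the $d_2$ samples is, on the other hand, clean and correct --- indeed cleaner than the paper's corresponding step, which discards nonnegative cross terms.)
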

 To further reduce the complexity caused by calculating multiple Jacobian-vector products, we can introduce an additional stochastic layer on the finite sum. Specifically, assume we have a parameter $Q>0$ and a collection of independent samples $\bar{\xi}:=\{u^0,\cdots,u^{c(Q)}\}$, where $c(Q)\sim\mathcal{U}\left\{0,\cdots,Q-1\right\}$. Then, we can approximate the gradient as follows,
\begin{align}\label{eqn:stoc_hyper_grad}
   &\bar{\nabla} f_{\delta}(x,y;\bar{\xi})
    =\nabla_x f(x,y)- \eta Q \nabla_{xy}^2g(x,y) \bar{H}(z;u^0)^{\top}\nonumber\\
    &\cdot\prod_{i=1}^{c(Q)}\left( (I_{d_2}-\eta\nabla_{yy}^2g(x,y)) \bar{H}(z;u^i)^{\top}\right)\nabla_yf(x,y),
\end{align}
where we have used the the convention $\prod_{i=1}^0 A=I$. 
We can conclude that the bias of the gradient estimator $\bar{\nabla} f_{\delta}(x,y;\bar{\xi})$ decays exponentially fast with $Q$, as summarized below: 
\begin{lemma}\label{lemma:bounds_of_hypergrad}
	Under Assumptions \ref{assump:upper_level}, \ref{assump:lower_level}and Lemma \ref{lemma:bound_of_nablaP}, setting $\frac{1}{\mu_g}(1-\frac{1}{4(2\pi)^{1/4}\sqrt{d_2}L_p})\leq\eta<\frac{1}{\mu_g}$, for any $x\in\mathbb{R}^{d_1}$, $y\in\mathcal{Y}$, we have 
    $
        \left\|\nabla f_{\delta}(x,y)-\mathbb{E}[\bar{\nabla} f_{\delta}(x,y;\bar{\xi})]\right\|\leq\frac{ C_{gxy}C_{fy}}{\mu_g}(1-\eta\mu_g)^Q. 
    $
	Furthermore, the variance of  $\bar{\nabla}f_{\delta}(x,y;\bar{\xi})$ is bounded as 
    $
		\mathbb{E}\left[\left\| \bar{\nabla}f_{\delta}(x,y;\bar{\xi}) -\mathbb{E}\left[\bar{\nabla}f_{\delta}(x,y;\bar{\xi})\right]\right\|^2\right]\leq\sigma_f(d_2),
    $
	where $\sigma_f(d_2)$ is defined in Appendix I.
\end{lemma}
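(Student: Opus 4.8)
The plan is to bound two quantities separately: the bias $\|\nabla f_{\delta}(x,y)-\mathbb{E}[\bar{\nabla} f_{\delta}(x,y;\bar{\xi})]\|$ and the variance of $\bar{\nabla}f_{\delta}(x,y;\bar{\xi})$. First I would verify that the two stochastic layers in the estimator are arranged to produce an unbiased estimate of the truncated Neumann series. Taking the expectation over the index $c(Q)\sim\mathcal{U}\{0,\dots,Q-1\}$ first, the factor $\eta Q$ in \eqref{eqn:stoc_hyper_grad} compensates the $1/Q$ probability of each term, so that $\mathbb{E}_{c(Q)}$ converts the single random product into the finite sum $\sum_{i=0}^{Q-1}(\cdots)^i$ appearing in $\bar{\nabla} f_{\delta}(x,y)$. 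Then, taking the expectation over the independent samples $u^0,\dots,u^{c(Q)}$ and invoking Proposition \ref{prop2} together with the unbiasedness $\mathbb{E}[\bar{H}(z;u)]=\nabla\mathcal{P}_{\mathcal{Y}\delta}(z)$ from Lemma \ref{lemma:bound_of_nablaP}, the independence across layers lets the expectation pass through each factor of the product, so $\mathbb{E}[\bar{\nabla} f_{\delta}(x,y;\bar{\xi})]$ equals the $Q$-term Neumann approximation $\bar{\nabla} f_{\delta}(x,y)$. The bias is therefore exactly the Neumann truncation error between $\nabla f_{\delta}(x,y)$ and its $Q$-term partial sum.

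For the bias bound I would write the difference between the true matrix inverse and its truncated Neumann series as the tail $\sum_{i=Q}^{\infty} M^i$, where $M=(I_{d_2}-\eta\nabla_{yy}^2g(x,y))\nabla\mathcal{P}_{\mathcal{Y}\delta}(z)^{\top}$. The key spectral estimate is that $\|I_{d_2}-\eta\nabla_{yy}^2g\|\le 1-\eta\mu_g$ by strong convexity with $\eta<1/\mu_g$, and $\|\nabla\mathcal{P}_{\mathcal{Y}\delta}(z)\|\le 1$ since $\mathcal{P}_{\mathcal{Y}\delta}$ is $1$-Lipschitz by Proposition \ref{prop1}; hence $\|M\|\le 1-\eta\mu_g<1$ and the geometric tail sums to $\|M\|^Q/(1-\|M\|)\le (1-\eta\mu_g)^Q/(\eta\mu_g)$. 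Multiplying by the outer factors $\|\nabla_{xy}^2 g\|\le C_{gxy}$ and $\|\nabla_y f\|\le C_{fy}$ from Assumptions \ref{assump:upper_level} and \ref{assump:lower_level}, and using $\eta$ close to $1/\mu_g$ so that $1/(\eta\mu_g)$ is an absolute constant, yields the stated bound $\frac{C_{gxy}C_{fy}}{\mu_g}(1-\eta\mu_g)^Q$. The lower bound $\frac{1}{\mu_g}(1-\frac{1}{4(2\pi)^{1/4}\sqrt{d_2}L_p})\le\eta$ is what guarantees the denominator simplifies cleanly to $\mu_g$.

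For the variance, I would expand $\mathbb{E}[\|\bar{\nabla}f_{\delta}-\mathbb{E}[\bar{\nabla}f_{\delta}]\|^2]\le\mathbb{E}[\|\bar{\nabla}f_{\delta}-\nabla_x f\|^2]$ and push the norm through the product of $c(Q)$ Jacobian-vector factors. Each factor $\bar{H}(z;u^i)$ has bounded second moment: its mean has norm at most $1$ and its variance is at most $16\sqrt{2\pi}d_2L_p^2$ by Lemma \ref{lemma:bound_of_nablaP}, so $\mathbb{E}\|\bar{H}\|^2\le 1+16\sqrt{2\pi}d_2L_p^2$. Since the samples $u^0,\dots,u^{c(Q)}$ are independent, the second moment of the product factorizes into a product of per-layer second moments, each scaled by $\|I_{d_2}-\eta\nabla_{yy}^2g\|^2\le(1-\eta\mu_g)^2$. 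The main obstacle is controlling the growth of this product over a random number of layers: the per-layer factor $(1-\eta\mu_g)^2(1+16\sqrt{2\pi}d_2L_p^2)$ can exceed one, so the naive product bound could blow up exponentially in $Q$. This is precisely why the step-size lower bound is imposed: choosing $\eta\ge\frac{1}{\mu_g}(1-\frac{1}{4(2\pi)^{1/4}\sqrt{d_2}L_p})$ forces $(1-\eta\mu_g)^2(1+16\sqrt{2\pi}d_2L_p^2)\le 1$ (up to a constant), keeping each layer's contribution controlled and letting the $\eta Q$ prefactor together with the averaging over $c(Q)$ collapse the geometric sum into a finite dimension-dependent constant. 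I would then collect the outer $\|\nabla_{xy}^2g\|\le C_{gxy}$ and $\|\nabla_y f\|\le C_{fy}$ factors and define the resulting bound as $\sigma_f(d_2)$, deferring the explicit constant to Appendix I.
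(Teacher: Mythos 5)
Your overall route is the same as the paper's: take the expectation over $c(Q)$ and the independent samples $u^0,\dots,u^{c(Q)}$ to show the estimator's mean is the $Q$-term Neumann partial sum, identify the bias with the geometric tail $\sum_{i\geq Q}M^i$ where $M=(I_{d_2}-\eta\nabla_{yy}^2g(x,y))\nabla\mathcal{P}_{\mathcal{Y}\delta}(z)^{\top}$ and $\|M\|\leq 1-\eta\mu_g$, and control the variance through per-layer second moments of $\bar{H}$ (Lemma \ref{lemma:bound_of_nablaP}) together with the step-size lower bound, which is exactly what keeps the per-layer factor $(1-\eta\mu_g)^2\cdot 16\sqrt{2\pi}d_2L_p^2$ at most one so the resulting geometric sum stays a dimension-dependent constant. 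Your variance step differs only in the decomposition: the paper splits the deviation into a term involving $\bar{H}(z;u^0)-\nabla\mathcal{P}_{\mathcal{Y}\delta}(z)$ and a term involving $\bar{G}_{yy}-\mathbb{E}[\bar{G}_{yy}]$, whereas you center at the deterministic $\nabla_x f(x,y)$ and use that the mean minimizes expected squared deviation; both reductions are valid and end at the same geometric estimate, so this difference is cosmetic.

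There is, however, a concrete slip in your bias bound. The outer factor in both $\nabla f_{\delta}(x,y)$ and the estimator is $\eta\,\nabla_{xy}^2g(x,y)(\cdots)$, not $\nabla_{xy}^2g(x,y)(\cdots)$: you list only $C_{gxy}$ and $C_{fy}$ as outer factors and drop the $\eta$. That $\eta$ cancels the $\frac{1}{\eta\mu_g}$ from the geometric tail exactly, giving $\eta\,C_{gxy}C_{fy}\cdot\frac{(1-\eta\mu_g)^Q}{\eta\mu_g}=\frac{C_{gxy}C_{fy}}{\mu_g}(1-\eta\mu_g)^Q$ with no condition on $\eta$ beyond $\eta<1/\mu_g$. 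As written, your chain yields $C_{gxy}C_{fy}\,(1-\eta\mu_g)^Q/(\eta\mu_g)$, and your appeal to the lower bound $\eta\geq\frac{1}{\mu_g}\bigl(1-\frac{1}{4(2\pi)^{1/4}\sqrt{d_2}L_p}\bigr)$ only gives $\frac{1}{\eta\mu_g}\leq\bigl(1-\frac{1}{4(2\pi)^{1/4}\sqrt{d_2}L_p}\bigr)^{-1}$, a dimensionless constant; this does not recover the factor $\frac{1}{\mu_g}$ in the lemma (and is strictly weaker whenever $\mu_g$ is large). So your claim that the step-size lower bound ``is what guarantees the denominator simplifies cleanly to $\mu_g$'' misattributes its role: in the paper's proof that lower bound is used only in the variance bound (precisely where you also use it), and the bias bound holds for every $\eta<1/\mu_g$. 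The fix is one line---keep the $\eta$ prefactor when assembling the outer factors---after which your argument matches the paper's.
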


\begin{algorithm}[!ht]
\caption{DMLCBO} \label{alg:DMSCBO}
\renewcommand{\algorithmicrequire}{\textbf{Input:}}
\renewcommand{\algorithmicensure}{\textbf{Output:}}
\begin{algorithmic}[1] 
\REQUIRE  Initialize $x_1\in\mathcal{X}$, $y_{1}\in\mathcal{Y}$, $v_{1}=\nabla_y g(x_{1},y_{1}) $, $w_1=\bar{\nabla} f_{\delta}(x_{1},y_{1};\bar{\xi}_{1})$, $\eta_k$, $\tau$, $\gamma$, $\beta$, $\alpha$, $Q$ and $\eta$. 
\FOR{$k=1,\cdots,K$}
\STATE Update ${x}_{k+1}=x_k-\frac{\eta_k\gamma}{\mathcal{P}_{[1/c_u,1/c_l]}(\sqrt{m_{2,k}}+G_0)}w_{k}$.		

\STATE Update $y_{k+1}=(1-\eta_k)y_{k}+\eta_k\mathcal{P}_{\mathcal{Y}}(y_{k}-\frac{\tau}{\mathcal{P}_{[1/c_u,1/c_l]}(\sqrt{m_{1,k}}+G_0)}  v_{k})$
\STATE Calculate the hyper-gradient $ \bar{\nabla} f_{\delta}(x_{k+1},y_{k+1};\bar{\xi}_{k+1})$ according to Eqn. (\ref{eqn:stoc_hyper_grad}).
\STATE Update $w_{k+1}=(1-\alpha)w_{k}+\alpha\bar{\nabla} f_{\delta}(x_{k+1},y_{k+1};\bar{\xi}_{k+1})$.
\STATE Update $v_{k+1}=(1-\beta)v_{k}+\beta\nabla_yg(x_{k+1},y_{k+1})$.
\ENDFOR
\ENSURE $x_{r}$ where $r\in\{1,\cdots,K\}$ is uniformly sampled.
\end{algorithmic}
\end{algorithm}

\subsection{Double-momentum Method for Lower-level Constrained Bilevel Optimization}

Equipped with the hypergradient $\bar{\nabla} f_{\delta}(x,y;\bar{\xi})$, our next endeavor is to design a \textit{single-loop} \textit{single-timescale} algorithm to solve the constrained bilevel optimization problem (\ref{lc_BL}). Our main idea is to adopt the double-momentum-based method and adaptive step-size method developed in \cite{huang2021biadam,khanduri2021near,shi2022gradient}. Our algorithm is summarized in Algorithm \ref{alg:DMSCBO}. Since we use the double-momentum method to solve the lower-level constrained bilevel optimization problem, we denote our method as DMLCBO.

Define $\alpha\in(0,1)$ and $\beta\in(0,1)$. For the lower-level problem, we can utilize the following projected gradient method with the momentum-based gradient estimator and adaptive step size to update $y$,
\begin{align}
        \hat{y}_{k+1} =& \mathcal{P}_{\mathcal{Y}}(y_{k}-\frac{\tau}{\mathcal{P}_{[1/c_u,1/c_l]}(\sqrt{m_{1,k}}+G_0)}  v_{k}),\nonumber\\
	y_{k+1}=&(1-\eta_k)y_{k}+\eta_k\hat{y}_{k+1},\nonumber\\
 v_{k+1}=&(1-\beta)v_{k}+\beta\nabla_yg(x_{k},y_{k}), \nonumber
\end{align}
where $\eta_k>0$, $\tau>0$. $G_0>0$ is used to avoid to prevent the molecule from being equal to 0. Here, we initialize  $v_1=\nabla_{y}g(x_{1},y_{1})$. Similarly, for the upper-level problem, we can utilize the following gradient method with the momentum-based gradient estimator and adaptive step size to update $x$,
\begin{align}
	{x}_{k+1}=&x_k-\frac{\eta_k\gamma}{\mathcal{P}_{[1/c_u,1/c_l]}(\sqrt{m_{2,k}}+G_0)}w_{k},\nonumber\\
 w_{k+1}=&(1-\alpha)w_{k}+\alpha\bar{\nabla} f_{\delta}(x_k,y_k;\bar{\xi}_k), \nonumber
\end{align}
and we initialize $w_{1}= \bar{\nabla} f_{\delta}(x_{1},y_{1};\bar{\xi}_{1})$. Note we set $m_{i,k+1}=0.99\cdot m_{1,k}+0.01\cdot\mathcal{G}^2$, where $\mathcal{G}$ denotes the gradient estimation, which is used in Adam \cite{kingma2014adam}.

\section{Convergence Analysis}
In this section, we discuss the convergence performance of our DMLCBO (All the detailed proofs are presented in our Appendix). We follow the theoretical analysis framework in \cite{huang2021biadam,huang2020accelerated,shi2022gradient,khanduri2021near} (For easy understanding, we also provide a route map of the analysis in Appendix \ref{section:route_map}).

\begin{figure*}[t]
    
    \begin{subfigure}{0.245\textwidth}
        \centering
        \includegraphics[width=1.4in]{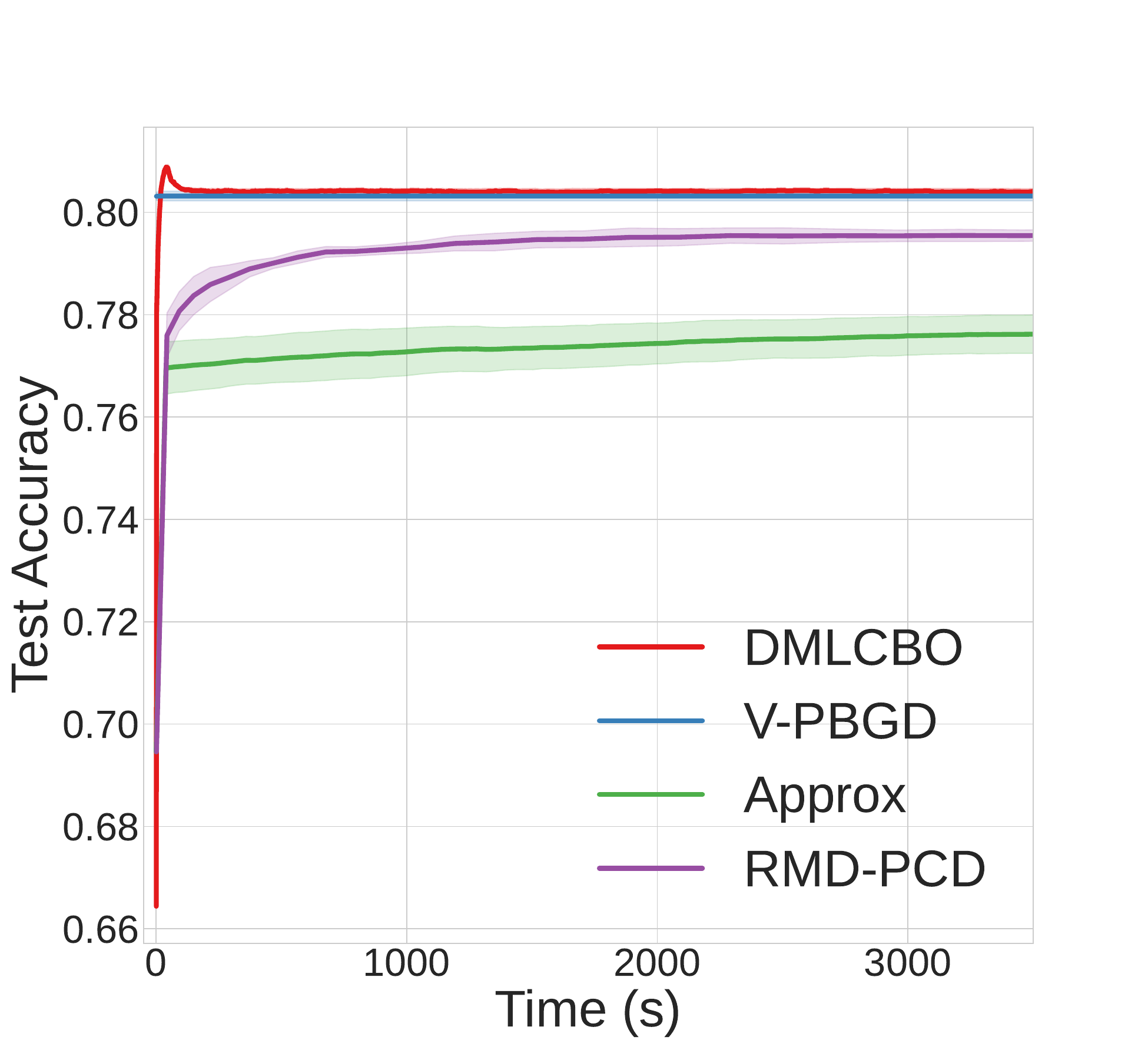}
        \caption{CodRNA}
    \end{subfigure}
    \begin{subfigure}{0.245\textwidth}
        \centering
        \includegraphics[width=1.4in]{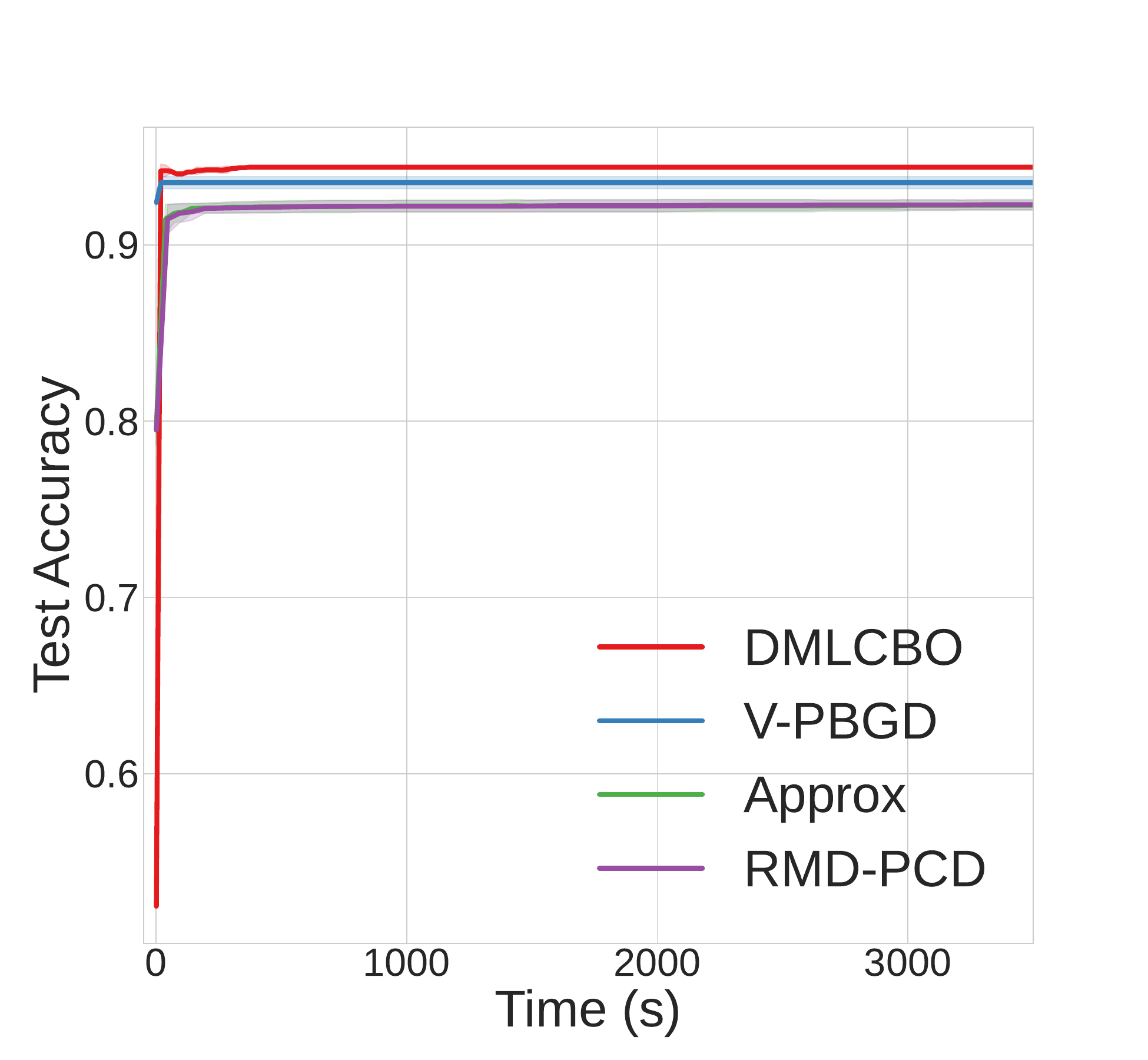}
        \caption{MNIST 6vs9}
    \end{subfigure}	
    \begin{subfigure}{0.245\textwidth}
        \centering
        \includegraphics[width=1.4in]{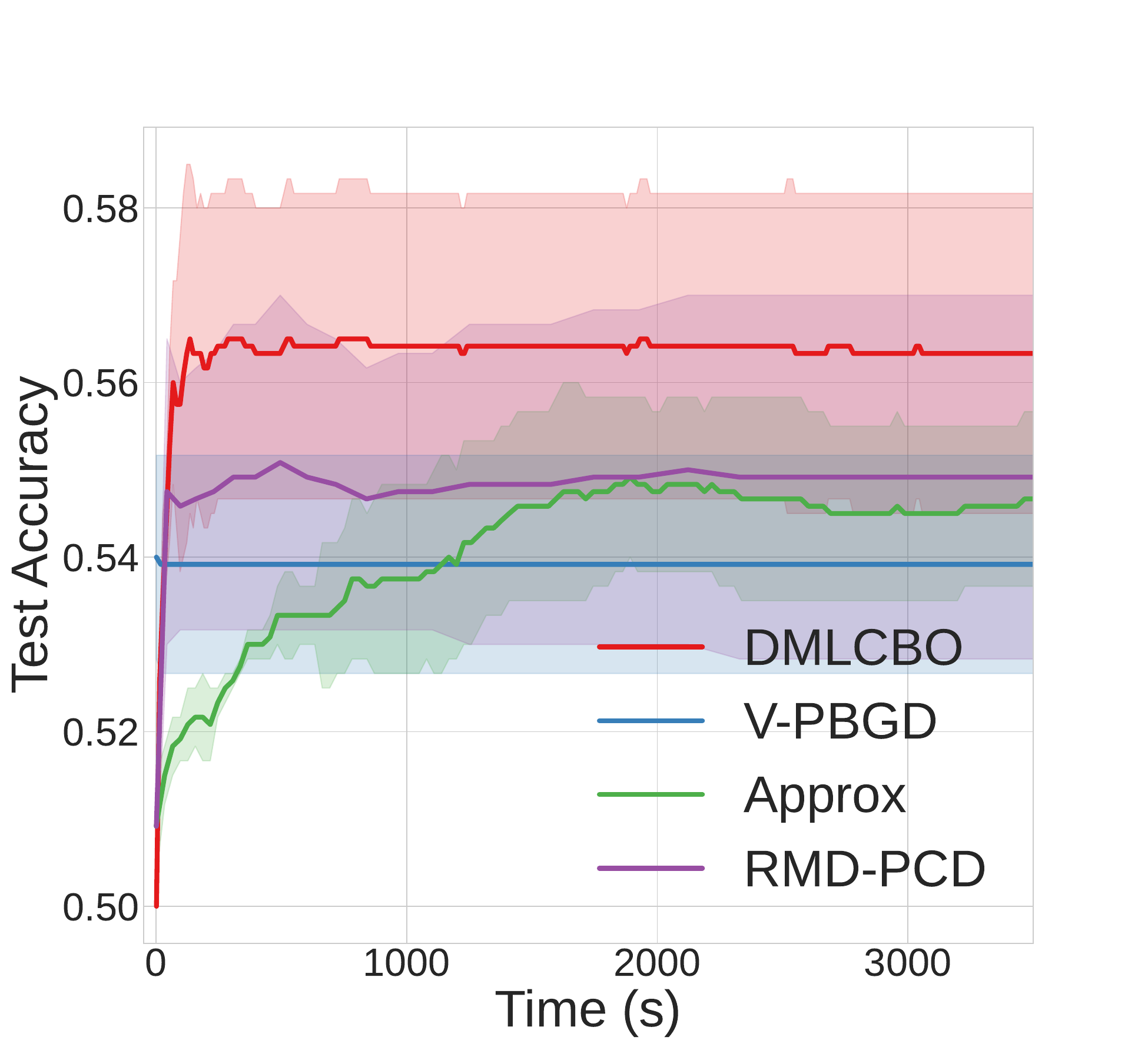}
        \caption{Madelon}
    \end{subfigure}
    \begin{subfigure}{0.245\textwidth}
        \centering
        \includegraphics[width=1.4in]{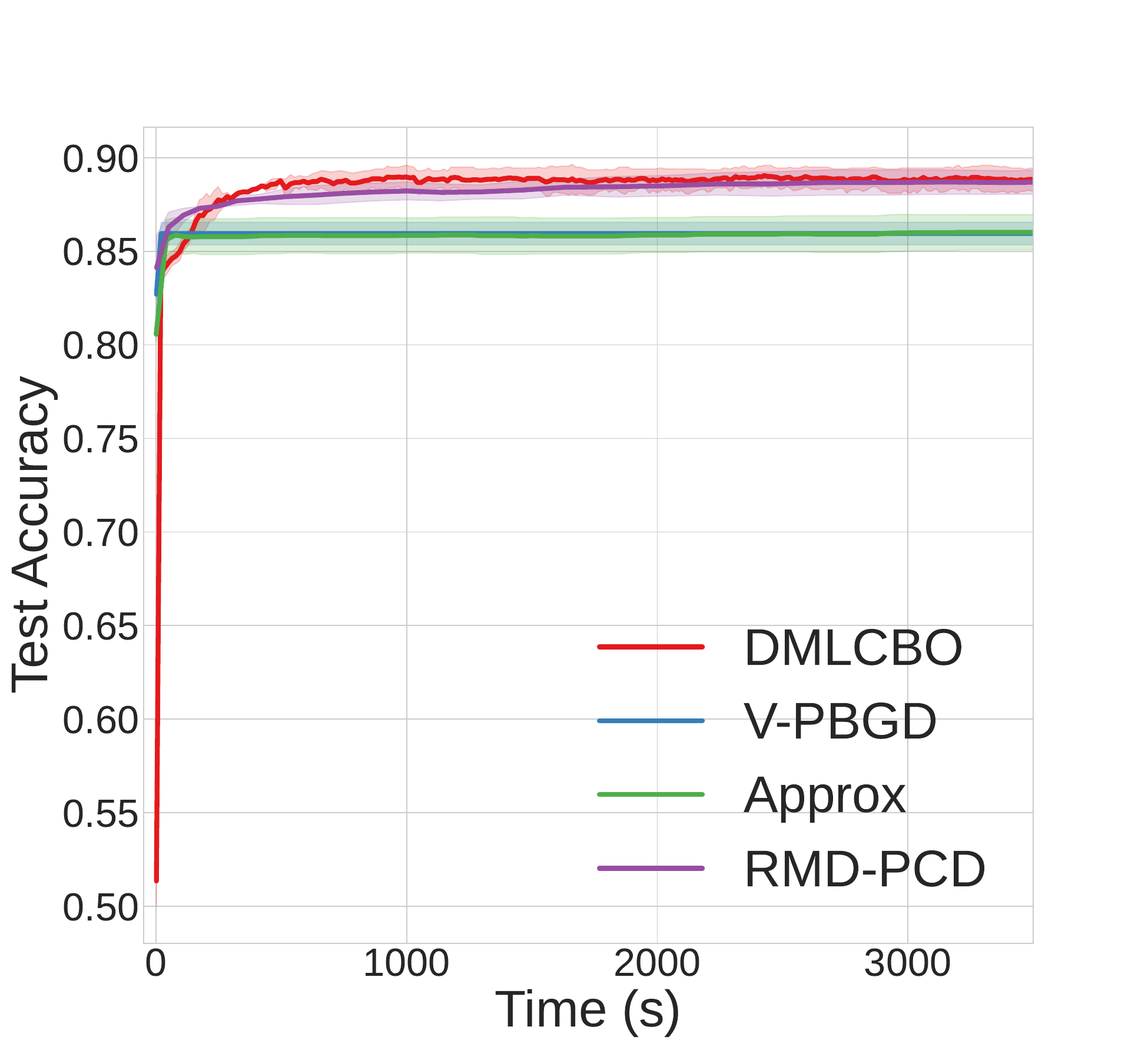}
        \caption{FashionMNIST 1vs7}
    \end{subfigure}
    \caption{Test accuracy against training time of all the methods in data hyper-cleaning.}
    \label{fig:data_reweight_time_vs_acc}
\end{figure*}

\begin{table*}[t]
\setlength{\tabcolsep}{1.4mm}
\caption{Test accuracy (\%) with standard variance of all the methods in data hyper-cleaning. (Higher is better.)}
\label{table:data_reweight}
\centering
\begin{tabular}{lcccc}
\toprule
Datasets        & DMLCBO           &  V-PBGD       & Approx           &  RMD-PCD             \\ \hline
CodRNA     & $\textbf{80.42} \pm 0.05$   & $80.32 \pm 0.09$ & $77.67 \pm 0.38$ & $79.43 \pm 0.07$ \\
MNIST 6vs9      & $\textbf{94.24} \pm 0.01$ & $93.53 \pm 0.34$ & $92.29 \pm 0.27$ & $92.33 \pm 0.29$ \\
Madelon    & $\textbf{56.33} \pm 1.83$ & $53.91 \pm 1.25$ & $54.75 \pm 1.08$ & $ 55.00 \pm 2.17$\\
FashionMNIST 1vs7  & $\textbf{88.85} \pm 0.60$ & $85.95 \pm 0.60$ & $86.03 \pm 0.96$  & $88.93 \pm 0.68$ \\
\bottomrule
\end{tabular}
\end{table*}

\begin{figure*}[t]
    \begin{subfigure}{0.245\textwidth}
        \centering
        \includegraphics[width=1.4in]{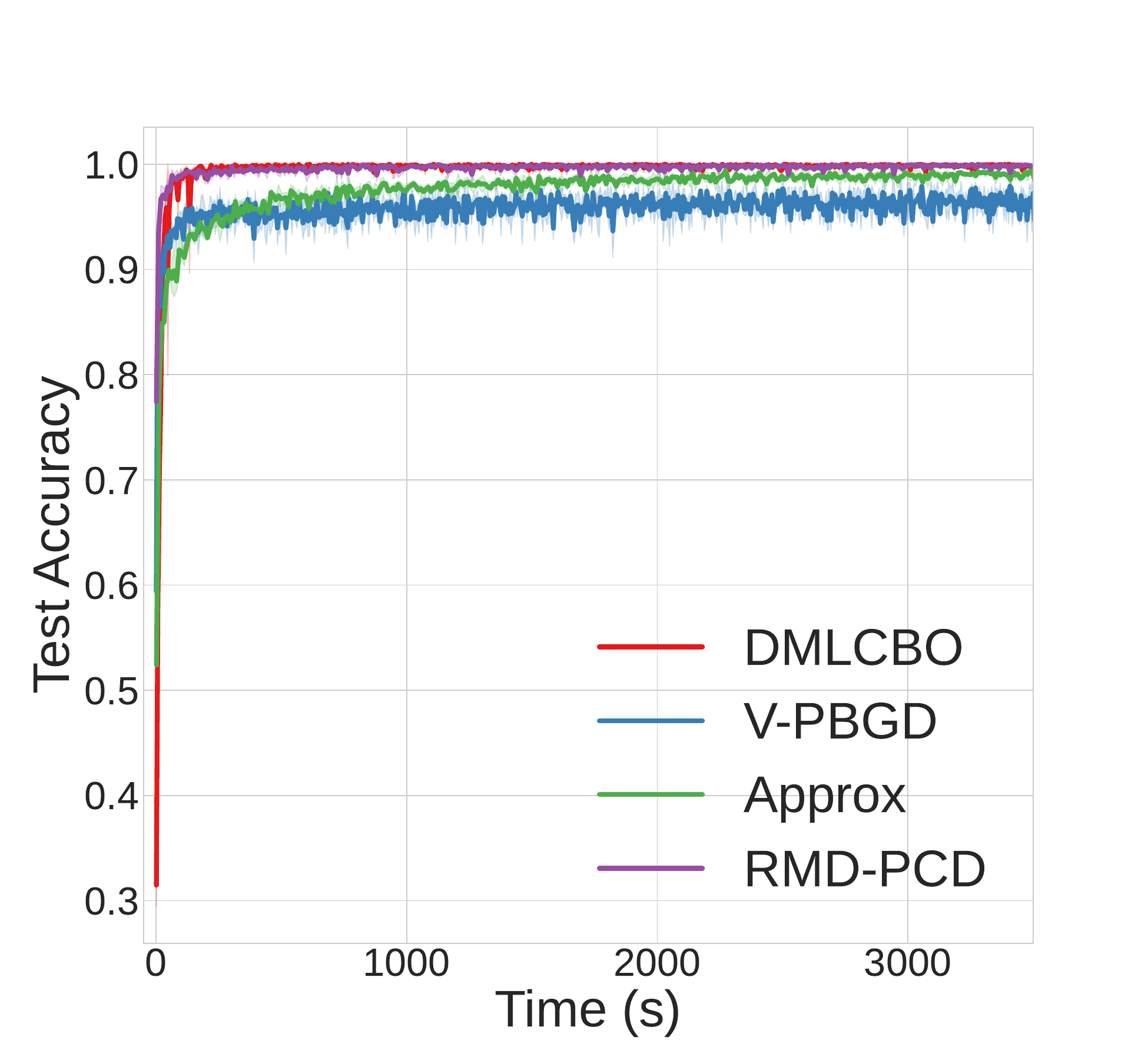}
        \caption{Omniglot 5 way 5 shot }
    \end{subfigure}
    \begin{subfigure}{0.245\textwidth}
        \centering
        \includegraphics[width=1.4in]{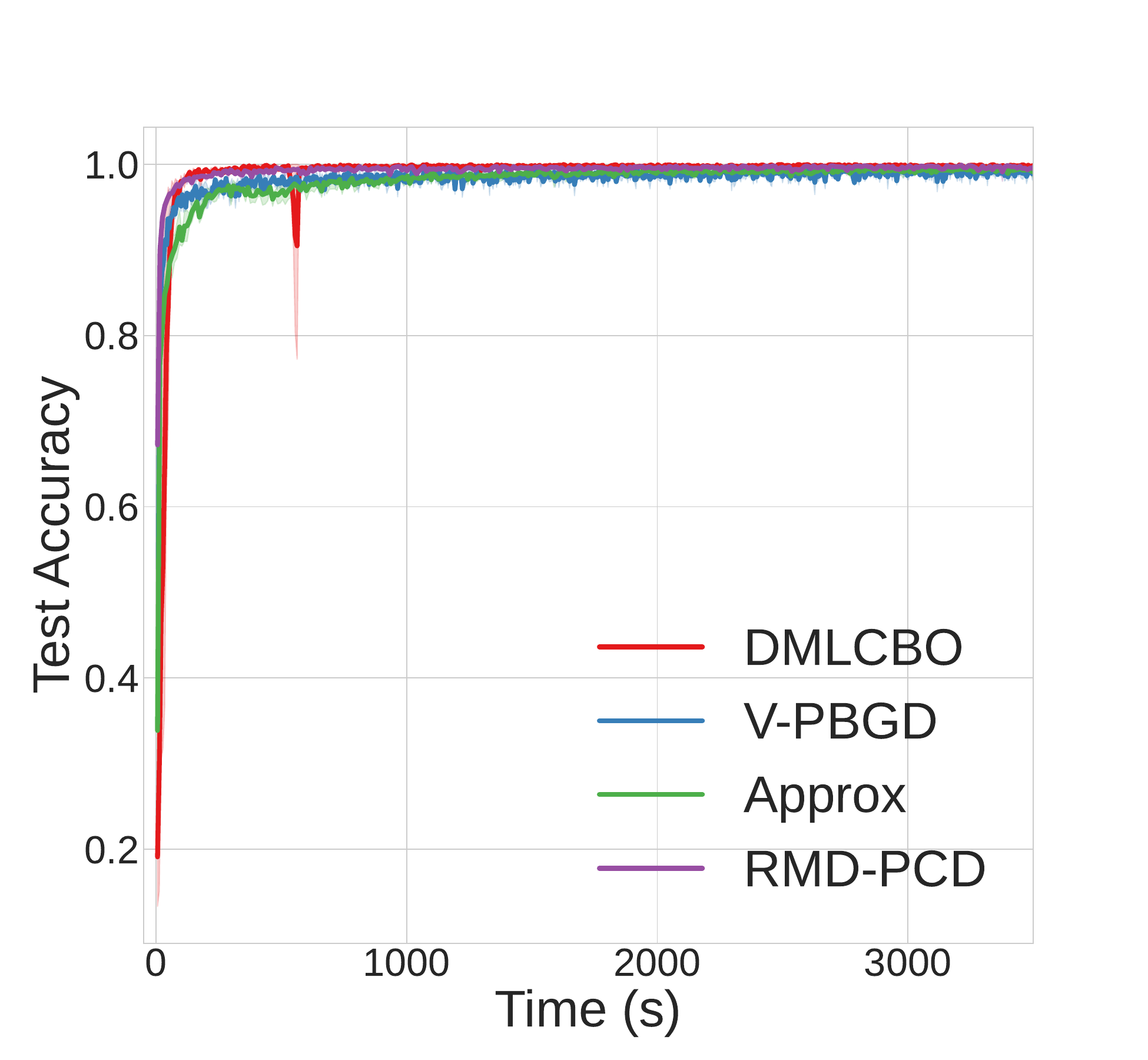}
        \caption{Omniglot 10 way 5 shot}
    \end{subfigure}
    \begin{subfigure}{0.245\textwidth}
        \centering
        \includegraphics[width=1.4in]{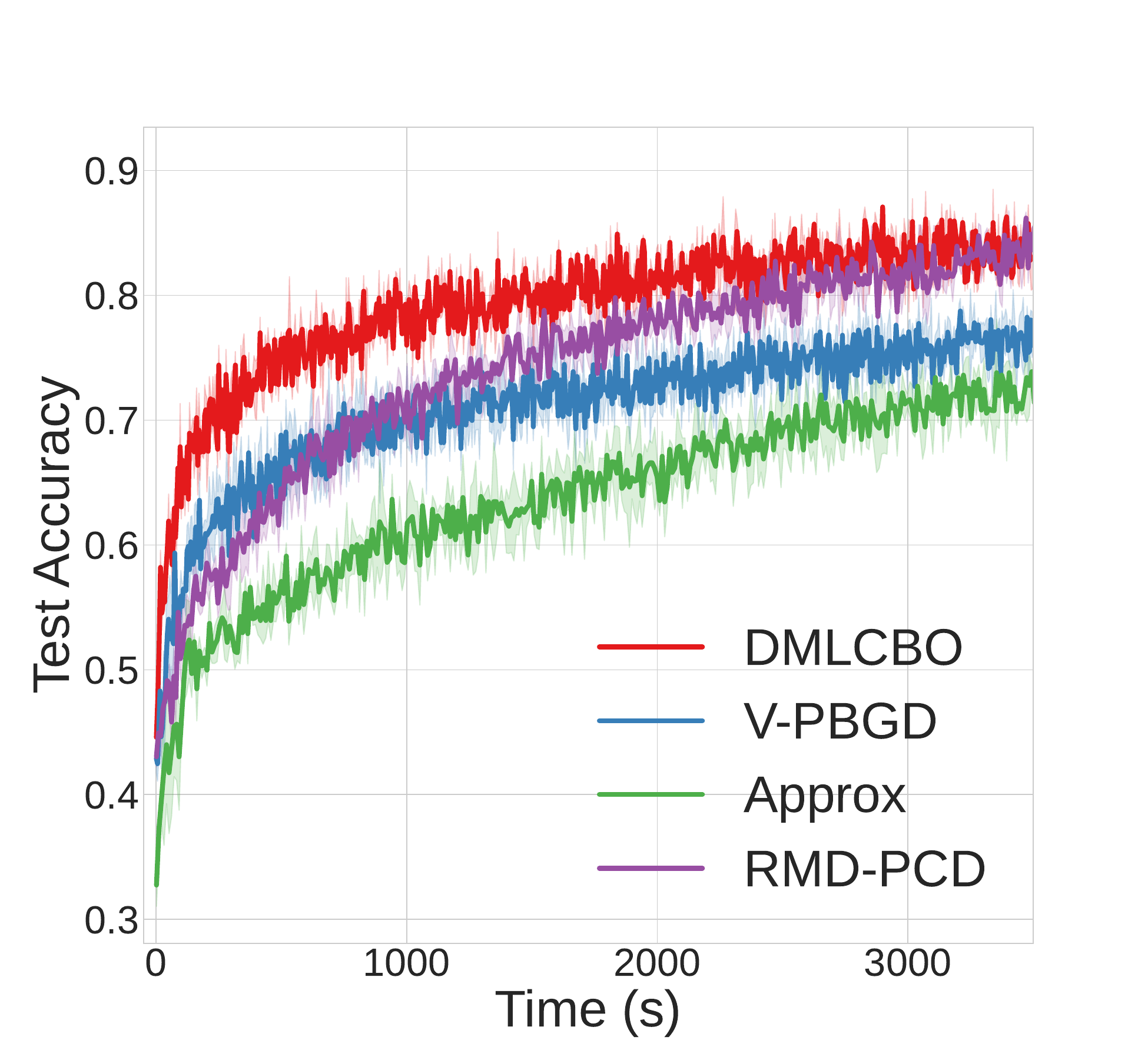}
        \caption{FC100 5 way 5 shot}
    \end{subfigure}	
    \begin{subfigure}{0.245\textwidth}
        \centering
        \includegraphics[width=1.4in]{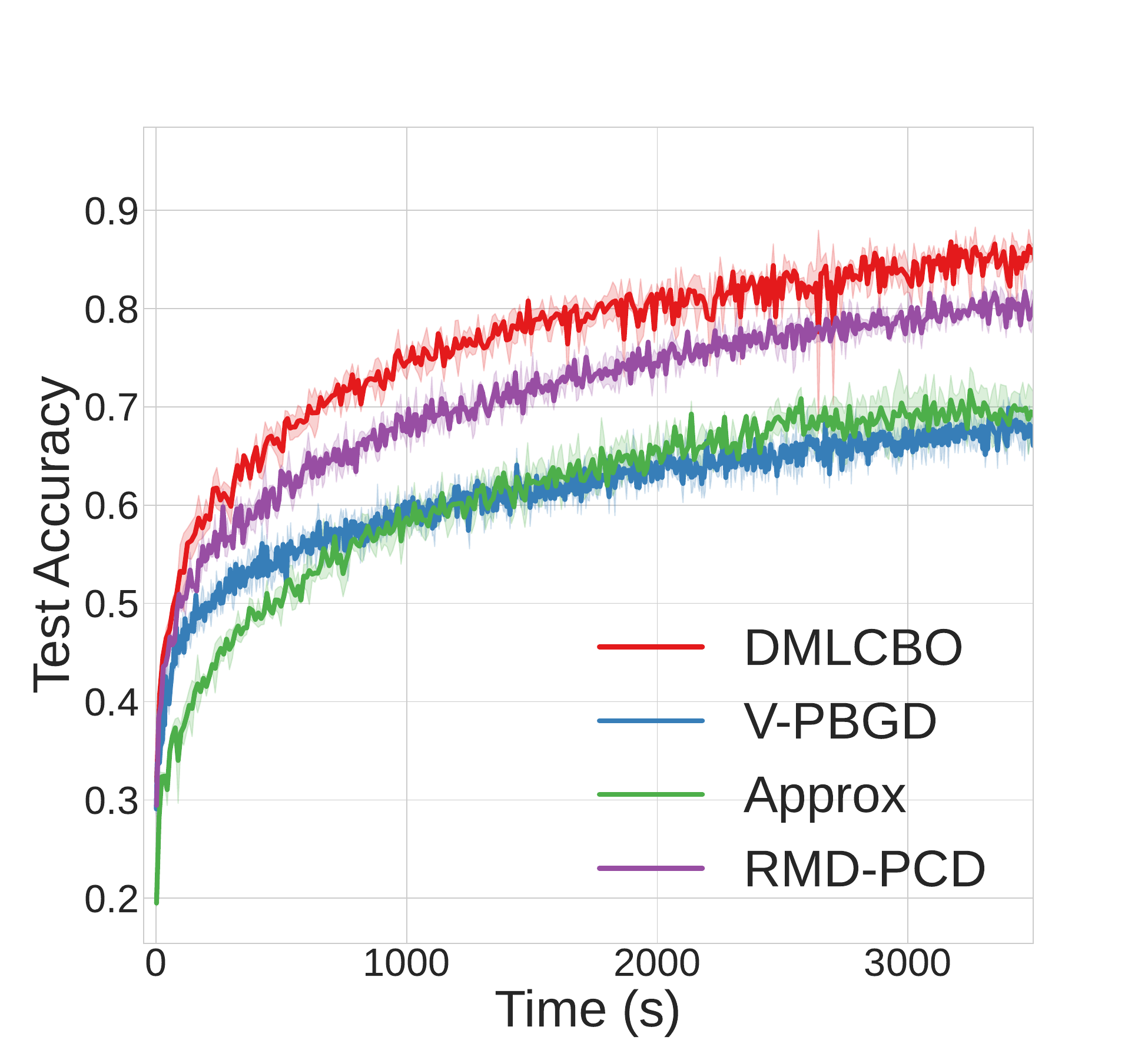}
        \caption{FC100 10 way 5 shot}
    \end{subfigure}
    \caption{Test accuracy against training time of all the methods in meta-learning. }
    \label{fig:poison_attack_time_vs_acc}
\end{figure*}

\begin{table*}[t]
\caption{Test accuracy (\%) with standard variance in meta-learning. (Higher is better.)}
\label{table:poison_attack_acc}
\centering
\setlength{\tabcolsep}{1.8mm}

\begin{tabular}{llcccc}
\toprule
Datasets  & Problem           & DMLCBO           & V-PBGD         & Approx           & RMD-PCD             \\ \hline
Omniglot& 5 way 5 shot      & $\textbf{99.97} \pm 0.04$ & $95.67 \pm 1.02$   & $99.41 \pm 0.25$ & $99.69 \pm 0.37$  \\
Omniglot& 10 way 5 shot & $\textbf{99.81} \pm 0.19$ & $98.43 \pm 0.16$ & $99.57 \pm 0.17$ & $99.29 \pm 0.17$   \\
FC100& 5 way 5 shot      & $\textbf{85.25} \pm 0.01$ & $78.14 \pm 1.97$ & $74.19 \pm 1.35$ & $84.75 \pm 0.65$  \\
FC100& 10 way 5 shot       & $\textbf{86.93} \pm 0.89$ & $67.18 \pm 0.75$ & $69.10 \pm 0.85$ & $81.18 \pm 0.39$ \\
\bottomrule
\end{tabular}
\end{table*}
Using  all the assumptions and lemmas, we can obtain the following theorem (For ease of reading, some parameter Settings are omitted here. The specific parameters can be found in Appendix K.2):
\begin{theorem}\label{theorem:convergence_rate}
Under Assumptions \ref{assump:upper_level}, \ref{assump:lower_level} and Lemma \ref{lemma:bound_of_nablaP},
with $\frac{1}{\mu_g}(1-\frac{1}{4(2\pi)^{1/4}\sqrt{d_2}L_p})\leq\eta<\frac{1}{\mu_g}$, $Q=\frac{1}{\mu_g\eta}\ln\frac{C_{gxy}C_{fy}K}{\mu_g}$, $0\leq a\leq 2$, $\alpha=c_1\eta_k$, $\beta=c_2\eta_k$, $L_0=\max(L_1(\frac{d_2}{\delta}),L_2(\frac{d_2}{\delta}))>1$, $\Phi_1
        =\mathbb{E}[F_{\delta}(x_{1})+\frac{10L_0^a c_l}{\tau\mu_g c_u}\|y_{1}-y^*(x_{1})\|^2 + c_l(\|w_{1}-\bar{\nabla} f_{\delta}(x_{1},y_{1})-R_{1}\|^2+\|\nabla_y g(x_{1},y_{1})-v_{1}\|^2)]$, and $\eta_k=\frac{t}{(m+k)^{1/2}}$, $t>0$, we have
\begin{align}
        &\min\{\|h\|:h\in\bar{\partial}_{\delta} F(x_r)\}
        \leq\frac{4m^{1/4}\sqrt{G}}{\sqrt{Kt}}+\frac{4\sqrt{G}}{(Kt)^{1/4}}.\nonumber
    \end{align}
    where $G=\frac{\Phi_1-\Phi^*}{\gamma c_l}+\frac{17t}{4K^2}(m+K)^{1/2}+\frac{4}{3tK^2}(m+K)^{3/2}+(m\sigma_f(d_2))t^2\ln (m+K)$; the range of $c_1$,$c_2$ $\gamma$, $\tau$ and $m$ are given  in the appendix. 
\end{theorem}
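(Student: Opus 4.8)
The plan is to analyze a Lyapunov (potential) function aggregating the smoothed objective, the lower-level tracking error, and the two momentum estimation errors, and to show it decreases on average along the trajectory. The bridge to the stationarity guarantee is the inclusion $\nabla F_{\delta}(x)\in\bar{\partial}_{\delta} F(x)$ established via Proposition \ref{prop2}, together with the trivial bound $\min\{\|h\|:h\in\bar{\partial}_{\delta} F(x_r)\}\leq\|\nabla F_{\delta}(x_r)\|$; it therefore suffices to control $\mathbb{E}[\|\nabla F_{\delta}(x_r)\|]$, and since $r$ is uniform this reduces by Cauchy--Schwarz to bounding $\frac{1}{K}\sum_{k=1}^{K}\mathbb{E}[\|\nabla F_{\delta}(x_k)\|^2]$. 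I would take as the potential the running version of $\Phi_1$, namely $\Phi_k=\mathbb{E}[F_{\delta}(x_k)+\frac{10L_0^a c_l}{\tau\mu_g c_u}\|y_k-y^*(x_k)\|^2+c_l(\|w_k-\bar{\nabla} f_{\delta}(x_k,y_k)-R_k\|^2+\|\nabla_y g(x_k,y_k)-v_k\|^2)]$, where $R_k$ is the deterministic bias of the hypergradient estimator controlled by Lemma \ref{lemma:bounds_of_hypergrad}.

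First I would establish a descent inequality for $F_{\delta}$ using the Lipschitz continuity of $\nabla F_{\delta}$ from Lemma \ref{lemma:lip_f_x}. Because the $x$-update uses the adaptive step $\eta_k\gamma/\mathcal{P}_{[1/c_u,1/c_l]}(\sqrt{m_{2,k}}+G_0)$ whose denominator is confined to $[1/c_u,1/c_l]$, the effective step lies in $[\eta_k\gamma c_l,\eta_k\gamma c_u]$; this lets me split $\langle\nabla F_{\delta}(x_k),w_k\rangle$ into a negative term $-\|\nabla F_{\delta}(x_k)\|^2$, a momentum-error term $\|w_k-\nabla F_{\delta}(x_k)\|^2$, and a second-order term that the smoothness absorbs. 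Second, I would control the lower-level error $\|y_{k+1}-y^*(x_{k+1})\|^2$: the projected momentum step on $y$ contracts toward $y^*(x_k)$ by $\mu_g$-strong convexity (using non-expansiveness of $\mathcal{P}_{\mathcal{Y}}$ and the standard one-step contraction), while the drift $\|y^*(x_{k+1})-y^*(x_k)\|\leq(L_g/\mu_g)\|x_{k+1}-x_k\|$ by Lemma \ref{lemma:Lip_y_star} ties lower-level progress to the upper-level step size.

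Next I would bound the two momentum errors recursively. Expanding $w_{k+1}=(1-\alpha)w_k+\alpha\bar{\nabla} f_{\delta}(x_{k+1},y_{k+1};\bar{\xi}_{k+1})$ and subtracting the (biased) target, the variance-reduction identity yields a contraction factor $(1-\alpha)$ on the previous error, a variance injection of order $\alpha^2\sigma_f(d_2)$ from Lemma \ref{lemma:bounds_of_hypergrad}, and cross terms of order $\|x_{k+1}-x_k\|^2+\|y_{k+1}-y^*(x_{k+1})\|^2$ coming from the Lipschitz continuity of the hypergradient map $\nabla f_{\delta}$ in $(x,y)$; the bias is isolated into $R_k$, whose magnitude is $O((1-\eta\mu_g)^Q)$ and is driven to $O(1/K)$ by the choice $Q=\frac{1}{\mu_g\eta}\ln\frac{C_{gxy}C_{fy}K}{\mu_g}$. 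The same argument with $\beta$ handles $\|\nabla_y g-v_k\|^2$, but without bias. Choosing $\alpha=c_1\eta_k$ and $\beta=c_2\eta_k$ makes the variance injections summable against the step-size schedule.

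Finally I would add the four inequalities with the stated weights so that the positive $\|y_k-y^*(x_k)\|^2$, momentum-error, and cross terms cancel, leaving $\eta_k\gamma c_l\,\mathbb{E}[\|\nabla F_{\delta}(x_k)\|^2]\leq\Phi_k-\Phi_{k+1}+(\text{summable residual})$. Summing over $k$, telescoping the $\Phi_k$, lower-bounding $\eta_k\geq\eta_K=t/(m+K)^{1/2}$ to convert the weighted sum into the average, and collecting residual sums of orders $\sqrt{m+K}$, $(m+K)^{3/2}$ and $\ln(m+K)$ (which define $G$) produces a bound on $\frac{1}{K}\sum_k\mathbb{E}[\|\nabla F_{\delta}(x_k)\|^2]$ carrying a $(m+K)^{1/2}$ factor that splits as $m^{1/2}+K^{1/2}$; taking the square root then yields precisely the two terms $4m^{1/4}\sqrt{G}/\sqrt{Kt}$ and $4\sqrt{G}/(Kt)^{1/4}$. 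The main obstacle, I expect, is the simultaneous calibration of $c_1,c_2,\gamma,\tau,m$ so that every coupling constant (from the Lipschitz moduli, the $\sqrt{d_2}/\delta$-dependent smoothing constant $L_0$, and the adaptive-step bounds $c_l,c_u$) lands with the correct sign in the telescoping sum; in particular the adaptive denominators break the exact descent identity, so the negative drift must be shown to dominate the step-size-induced perturbations uniformly in $k$.
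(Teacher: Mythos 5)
Your proposal is correct and follows essentially the same route as the paper's proof: the same Lyapunov function $\Phi_k$ combining $F_{\delta}$, the tracking error $\|y_k-y^*(x_k)\|^2$, and the two momentum errors; the same one-step lemmas (descent of $F_{\delta}$, lower-level contraction plus the $L_g/\mu_g$ drift, momentum recursions with bias $R_k$ driven to $O(1/K)$ by the choice of $Q$); the same telescoping and step-size bookkeeping; and the same bridge $\nabla F_{\delta}(x_r)\in\bar{\partial}_{\delta}F(x_r)$ via Proposition \ref{prop2} followed by a Jensen/Cauchy--Schwarz step. The only mechanical differences are cosmetic: you extract the $-\|\nabla F_{\delta}(x_k)\|^2$ drift directly by splitting $\langle\nabla F_{\delta}(x_k),w_k\rangle$, whereas the paper keeps $-\|\tilde{x}_{k+1}-x_k\|^2$ as the drift and recovers $\|\nabla F_{\delta}(x_k)\|$ at the end through a gradient-mapping triangle inequality, and your momentum-error cross term should read $\|y_{k+1}-y_k\|^2$ (iterate movement, as Lipschitzness of $\nabla f_{\delta}$ dictates) rather than $\|y_{k+1}-y^*(x_{k+1})\|^2$.
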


\begin{remark}
    Let $t=\mathcal{O}(1)$, $m=\mathcal{O}(1)$, $\sigma_f(d_2)=\mathcal{O}(d_2)$ and $
    \ln(m+K)=\tilde{\mathcal{O}}(1)$, we have $\sqrt{G}=\tilde{\mathcal{O}}((\frac{d_2}{\delta})^{a/2}+\sqrt{d_2})$. Thus, our proposed DMLCBO can converge to a $(\delta, \epsilon)$ stationary point at the rate of $\tilde{\mathcal{O}}(((\frac{d_2}{\delta})^{a/2}+\sqrt{d_2})K^{-1/4})$ with properly choosing the hyper-parameters. Then, setting $a=0$ and $\min\{\|h\|:h\in\bar{\partial}_{\delta} F(x_r)\}\leq\tilde{\mathcal{O}}(((\frac{d_2}{\delta})^{a/2}+\sqrt{d_2})K^{-1/4})\leq \epsilon$, we have $K=\tilde{\mathcal{O}}(d_2^2\epsilon^{-4})$. Obviously, by setting $a=0$, our method can avoid the influence of $\delta$ on the convergence iterations \cite{lin2022gradient} and obtain the same convergence iteration number as the traditional bilevel optimization method without the Lipschitz assumption on the stochastic gradient estimation \cite{huang2021biadam}. Then, since we need to approximate $c(Q)\times d_2$ Jacobian-vector products and $
    \mathcal{O}(p)$ time to calculate the projection in each iteration, the average computational complexity of approximating hypergradient is $\mathcal{O}(Qd_2p+d_1)$. Therefore, the total computational complexity of our method is $\mathcal{\tilde{O}}((Qd_2p+d_1)d_2^2\epsilon^{-4})$. It is worth noting that since we approximate $d_2$ Jacobians in parallel, the overall execution time of our algorithm can be significantly reduced.

\end{remark}

\section{Experiments}
In this section, we compare the performance of our method with SOTA methods for LCBO in two applications. (Detailed settings are given in our Appendix.)
\subsection{Baselines}
We compare our method with the following  LCBO methods.
\begin{enumerate}
    \item \textbf{V-PBGD}. The method proposed in \cite{shen2023penalty} uses the value function method to solve the bilevel optimization problem. 
    \item \textbf{RMD-PCD}. The method proposed in \cite{bertrand2022implicit} which uses the reverse method to calculate the hyper-gradient.
    \item \textbf{Approx}. The method proposed in \cite{pedregosa2016hyperparameter} solves a linear optimization problem to calculate the hypergradient.
\end{enumerate}
We implement all the methods by Pytorch \cite{paszke2019pytorch}. Since JaxOpt \cite{blondel2022efficient} is implemented by JAX \cite{jax2018github}, for a fair comparison, we use Approx with the Jacobian calculating methods in \cite{martins2016softmax,djolonga2017differentiable,blondel2020fast,niculae2017regularized,vaiter2013local,cherkaoui2020learning} as a replacement of JaxOpt, which uses the same method to calculate the hypergradient. We run all the methods 10 times on a PC with four 1080Ti GPUs.

\subsection{Applications}

\noindent \textbf{Data hyper-cleaning.} In this experiment, we evaluate the performance of all the methods in the application named data hyper-cleaning.
In many real-world applications, the training set and testing set may have different distributions. To reduce the discrepancy between the two distributions, each data point will be given an additional importance weight, which is called data hyper-clean.
This problem can be formulated as 
\begin{align}
	&\min_{x\in\mathbb{R}^{d_1}} \sum_{\mathcal{D}_{val}}\ell\left(y^*(x)^{\top}a_i,b_i\right)\nonumber\\
	s.t.&\quad y^*(x)=\mathop{\arg\min}_{\|y\|_1\leq r}\sum_{\mathcal{D}_{tr}}[\sigma(x)]_i\ell\left(y^{\top}a_i,b_i\right)+c\|y\|^2, \nonumber 
\end{align} 
where $r>0$, $\mathcal{D}_{tr}$ and $\mathcal{D}_{val}$ denote the training set and validation set respectively; $(a_i, b_i)$ denotes the data point; $\sigma(\cdot)\coloneqq1/(1+exp(-\cdot))$ is the Sigmoid function; $\ell(\cdot,\cdot)$ is the loss function; $c>0$ is the regularization parameter used to ensure the lower-level problem to be strongly convex. In this experiment, an additional $\ell_1$ is added to the lower-level problem to ensure the sparsity of the model.

\noindent\textbf{Meta-learning.} Meta-learning for few-shot learning is to learn a shared prior parameter across a distribution of tasks, such that a simple learning step with few-shot data based on the prior leads to a good adaptation to the task in the distribution. In particular, the training task $\mathcal{T}_i$ is sampled from distribution $P_{\mathcal{T}}$. Each task $\mathcal{T}_i$ is characterized by its training data $\mathcal{D}_{tr}^i$ and the test data $\mathcal{D}_{te}^i$. The upper level is to extract features from input data and multi-class SVM served as the base learner in the lower-level optimization to classify the data on its extracted features. This problem can be formulated as 
\begin{align}
    &\min_{\phi}\sum_{\mathcal{T}_i\sim P_{\mathcal{T}}}\mathcal{L}(y_i^*(x),\phi_x,\mathcal{D}_{te}^i)\\
    s.t.&\quad y_i^*(x)=\mathop{\arg\min}_{0\leq y_i\leq Ce} \frac{1}{2}y_i^{\top}Gy_i-e^{\top}y_i \nonumber
\end{align}
where $\phi_x$ denotes the network parameterized by $x$; $e$ denotes the vector with all elements equal to 1; $G=K\odot(VV^{\top})$, $K_{ij}=\phi(a_i)^{\top}\phi(a_j)$, $V_{ij}=\frac{1}{k-1}$ if $b_i\not=j$ otherwise $V_{ij}=1$, $(a_j,b_j)\in\mathcal{D}_{tr}^i$, and $K$ denotes the number of classes.


\subsection{Results}
We have presented the test accuracy results for all methods in Tables \ref{table:data_reweight} and \ref{table:poison_attack_acc}, and visualized the testing performance as a function of training time in Figures \ref{fig:data_reweight_time_vs_acc} and \ref{fig:poison_attack_time_vs_acc}. Upon closer examination of the data presented in Tables \ref{table:data_reweight} and \ref{table:poison_attack_acc}, we can find that our method achieves similar performances in some cases and sometimes gets results better than other methods. One possible reason is that RMD-PCD, Approx, and V-PBGD highly depend on the solutions to the lower-level problem. Once the approximated solution is not good enough then they may obtain bad performance. 
From all these results, we can conclude that our DMLCBO
outperforms other methods in terms of effectiveness.

\section{Conclusion}
In this paper, without using the restive assumptions on $y^*(x)$, we propose a new method to derive the hypergradient for LCBO. Then, we leverage randomized smoothing to approximate the hypergradient. Then, using our new hypergradient approximation, we propose a \textit{single-loop single-timescale} algorithm based on the double-momentum method and adaptive step size method, which updates the lower- and upper-level variables simultaneously. Theoretically, we prove our methods can converge to the $(\delta,\epsilon)$-stationary point with $\tilde{\mathcal{O}}(d_2^2\epsilon^{-4})$. The experimental results in data hypercleaning and meta-learning demonstrate the effectiveness of our proposed method. 

\section*{Acknowledgements}
  Bin Gu was supported by the Natural Science Foundation of China under Grant No.62076138. Yi Chang was supported by the Natural Science Foundation of China under Grant No.U2341229 and the National Key R\&D Program of China under Grant No.2023YFF0905400

\section*{Impact Statement}
This paper presents work on the theoretical analysis of the bilevel optimization problem. There are many potential societal consequences of our work, none of which we feel must be specifically highlighted here.
\bibliography{example_paper}
\bibliographystyle{icml2024}

\newpage
\appendix
\onecolumn

\section*{Appendix }

\section{Experimental Setting}
\noindent \textbf{Data hyper-cleaning.} In this experiment, we evaluate all the methods on the datasets MNIST, FashionMNIST, CodRNA, and Madelon \footnote{https://www.csie.ntu.edu.tw/~cjlin/libsvmtools/datasets/}. For MNIST and FashionMNIST, we choose two classes to conduct a binary classification. In addition, we flip $30\%$ of the labels in the training set as the noisy data. We set $r=1$ for all the datasets.  For our method, we search the step size from the set $\{1,10^{-1},10^{-2},10^{-3},10^{-4},10^{-5}\}$.  For other methods, we search the step size from the set $\{10,1,10^{-1},10^{-2},10^{-3},10^{-4},10^{-5}\}$. Following the default setting in \cite{ji2021bilevel}, we set $Q=3$ and $\eta=0.5$ for our method. In addition, we set $\eta_k={1}/{\sqrt{100+k}}$, $c_1=10$ and $c_2=10$ for our method. For V-PBGD, RMD-PCD, and Approx, following the setting in \cite{pedregosa2016hyperparameter}, we set the inner iteration number at $100$. We run all the methods for $10000$ iterations and evaluate the test accuracy for every $100$ iteration. We set $\delta=1e-6$.

\noindent\textbf{Meta learning.} In this experiment, we evaluate all the methods on the datasets Omniglot and FC100. We set $C=10$. We use a network with four convolution-ELU-Maxpooling layers to extract the features that output a $\mathbb{R}^{100}$ vector as the feature.  For our method, we search the step size from the set $\{1,10^{-1},10^{-2},10^{-3},10^{-4},10^{-5}\}$.  For other methods, we search the step size from the set $\{10,1,10^{-1},10^{-2},10^{-3},10^{-4},10^{-5}\}$. Following the default setting in \cite{ji2021bilevel}, we set $Q=3$ and $\eta=0.5$ for our method. In addition, we set $\eta_k={1}/{\sqrt{100+k}}$, $c_1=10$ and $c_2=10$ for our method. For V-PBGD, RMD-PCD, and Approx, we also set the inner iteration number at $3$. We set $\delta=1e-6$.

\section{Ablation Study}

In this section, we conduct ablation experiments on the hyper-parameters $Q$ and $\eta$. To control the variables, we explore the effect of each hyper-parameter while keeping the other hyper-parameters as default as shown in the experimental setups. We search the step size of both $x$ and $y$. We present the results in Tables \ref{table:data_reweight_Q}, \ref{table:data_reweight_eta}. We can find that increasing $Q$ will lead to a long training time. In addition, setting $Q=1$ usually leads to the worst results, which is because setting $Q=1$ means ignoring the inverse of the Hessian matrix in our hypergradient and may not converge to our stationary point.

\begin{table}[h]
\caption{Test accuracy (\%) of our method with different Q in data hyper-cleaning. (Higher is better.)}
\label{table:data_reweight_Q}
\centering
\begin{tabular}{lcccc}
\toprule
Datasets        & Q=1           &  Q=3       & Q=5           &  Q=7             \\ \hline
CodRNA     & $80.25 \pm 0.07$   & $80.42 \pm 0.05$ & $80.55 \pm 0.03$ & $\textbf{80.56} \pm 0.04$ \\
MNIST 6vs9      & $93.65 \pm 0.01$ & $94.24\pm 0.01$ & $94.15 \pm 0.01$ & $\textbf{94.35} \pm 0.01$ \\
Madelon    & $55.23 \pm 1.22$ & $56.33 \pm 1.83$ & $\textbf{56.67} \pm 1.24$ & $56.23 \pm 1.56$\\
FashionMNIST 1vs7  & $88.25 \pm 0.56$ & $\textbf{88.85} \pm 0.60$ & $88.75 \pm 0.40$  & $88.55 \pm 0.33$ \\
\bottomrule
\end{tabular}
\end{table}

\begin{table}[h]
\caption{Test accuracy (\%) of our method with different $\eta$ in data hyper-cleaning. (Higher is better.)}
\label{table:data_reweight_eta}
\centering
\begin{tabular}{lccc}
\toprule
Datasets        & $\eta=0.1$           &  $\eta=0.5$       & $\eta=1$           \\ \hline
CodRNA     & $80.45 \pm 0.04$   & $80.42 \pm 0.05$ & $80.33 \pm 0.06$  \\
MNIST 6vs9      & $94.65 \pm 0.01$ & $94.24\pm 0.01$ & $94.35 \pm 0.01$  \\
Madelon    & $56.23 \pm 1.14$ & $56.33 \pm 1.83$ & $56.23 \pm 1.56$\\
FashionMNIST 1vs7  & $88.65 \pm 0.67$ & $88.85\pm 0.60$ & $88.67 \pm 0.23$   \\
\bottomrule
\end{tabular}
\end{table}



\section{Proof of Lemma \ref{lemma:Lip_y_star}}
\begin{proof}
    Here, we follow the proof in \cite{van2021variable}. For given $x_1$ and $x_2$, we have the corresponding unique optimal solutions $y^*(x_1)$ and $y^*(x_2)$. For the constrained lower-level problem, we have the following optimal conditions
	\begin{align}
		0\in \nabla_{y}g(x_1,y^*(x_1))+\partial\delta_{\mathcal{Y}}(y^*(x_1)),\quad 0\in \nabla_{y}g(x_2,y^*(x_2))+\partial\delta_{\mathcal{Y}}(y^*(x_2))
	\end{align}
	where $\delta_{\mathcal{Y}}(\cdot)$ is the indicator function of the constriant.
	
	Since $g$ is strongly convex for any given $\tilde{x}$ and $\mathcal{Y}$ is convex, we have 
	\begin{align}
		\mu_g\|y^*(x_1)-y^*(x_2)\|^2\leq\left\langle \left(\nabla_{y}g(\tilde{x},y^*(x_1))+h_1\right) -\left(\nabla_{y}g(\tilde{x},y^*(x_2))+h_2 \right), y^*(x_1)-y^*(x_2)  \right\rangle
	\end{align}
	where $h_1\in \partial\delta_{\mathcal{Y}}(y^*(x_1))$ and $h_2\in \partial\delta_{\mathcal{Y}}(y^*(x_2))$. Make the particular choices $h_1=-\nabla_{y}g(x_1,y^*(x_1))\in \partial\delta_{\mathcal{Y}}(y^*(x_1))$ and $h_2=-\nabla_{y}g(x_2,y^*(x_2))\in \partial\delta_{\mathcal{Y}}(y^*(x_2))$, we have 
	\begin{align}
		&\mu_g\|y^*(x_1)-y^*(x_2)\|^2\nonumber\\
		\leq&\left\langle \nabla_{y}g(\tilde{x},y^*(x_1))-\nabla_{y}g(x_1,y^*(x_1)), y^*(x_1)-y^*(x_2)  \right\rangle\nonumber\\ &+\left\langle\nabla_{y}g(x_2,y^*(x_2))-\nabla_{y}g(\tilde{x},y^*(x_2)) , y^*(x_1)-y^*(x_2)  \right\rangle
	\end{align}
	Then, setting $\tilde{x}=x_1$, we have 
	\begin{align}
		&\mu_g\|y^*(x_1)-y^*(x_2)\|^2\nonumber\\
		\leq&\left\langle\nabla_{y}g(x_2,y^*(x_2))-\nabla_{y}g(x_1,y^*(x_2)) , y^*(x_1)-y^*(x_2)  \right\rangle\nonumber\\
		\leq&\|\nabla_{y}g(x_2,y^*(x_2))-\nabla_{y}g(x_1,y^*(x_2))\|\| y^*(x_1)-y^*(x_2)  \|\nonumber\\
		\leq&L_{g}\|x_1-x_2\|\| y^*(x_1)-y^*(x_2)  \|
	\end{align}
	where the last inequality is due to Assumption \ref{assump:lower_level}. Rearrange above inequality, we have 
	\begin{align}
		\|y^*(x_1)-y^*(x_2)\|
		\leq \frac{L_{g}}{\mu_g}\|x_1-x_2\|
	\end{align}
	That completes the proof.
\end{proof}

\section{Proof of Lemma \ref{lemma:converge_p_delta}}
\begin{proof}
    First, we show that the limit exists. By Lipschitzness of the projection operator and Jenson inequality, we know that $\partial_{\delta} \mathcal{P}_{\mathcal{Y}}(z)$ lies in a bounded ball with radius 1. For any sequence of $\delta_k$ with $\delta_k\downarrow 0$, we know that $\partial_{\delta_{k+1}} \mathcal{P}_{\mathcal{Y}}(z)\subset \partial_{\delta_k} \mathcal{P}_{\mathcal{Y}}(z)$. Therefore, the limit exists by the monotone convergence theorem.

    Next, we show that $\lim_{\delta\downarrow0}\partial_{\delta} \mathcal{P}_{\mathcal{Y}}(z)=\partial \mathcal{P}_{\mathcal{Y}}(z)$. According to the Proposition 2.6.2 in \cite{clarke1990optimization}, we have 
    \begin{align}
        \partial \mathcal{P}_{\mathcal{Y}}(z)=\bigcap_{\delta>0}\bigcup_{z'\in\mathbb{B}_{\delta}(z)}\partial \mathcal{P}_{\mathcal{Y}}(z').
    \end{align}
    Then, using the fact 
    \begin{align}
        \bigcup_{z'\in\mathbb{B}_{\delta}(z)}\partial \mathcal{P}_{\mathcal{Y}}(z')\subseteq co\left(\bigcup_{z'\in\mathbb{B}_{\delta}(z)}\partial \mathcal{P}_{\mathcal{Y}}(z')\right)=\partial_{\delta} \mathcal{P}_{\mathcal{Y}}(z),
    \end{align}
    we have $\partial \mathcal{P}_{\mathcal{Y}}(z)\subseteq \lim_{\delta\downarrow0}\partial_{\delta} \mathcal{P}_{\mathcal{Y}}(z)$.

    Using the upper semicontinuous of $\partial \mathcal{P}_{\mathcal{Y}}(z)$ at $z$ \cite{clarke1990optimization}, we have that for any $\epsilon>0$, there exists $\delta>0$ such that 
    \begin{align}
        \bigcup_{z'\in\mathbb{B}_{\delta}(z)}\partial \mathcal{P}_{\mathcal{Y}}(z')\subseteq \partial \mathcal{P}_{\mathcal{Y}}(z)+\epsilon B
    \end{align}
    Then, by convexity of $\partial \mathcal{P}_{\mathcal{Y}}(z)$ and $\epsilon B$, we know that their Minknowski sum $\partial \mathcal{P}_{\mathcal{Y}}(z)+\epsilon B$ isconvex. Therefore, we conclude that for any $\epsilon>0$, there exists $\delta>0$ such that 
    \begin{align}
        \partial_{\delta} \mathcal{P}_{\mathcal{Y}}(z)=co\left(\bigcup_{z'\in\mathbb{B}_{\delta}(z)}\partial \mathcal{P}_{\mathcal{Y}}(z')\right)\subseteq \partial \mathcal{P}_{\mathcal{Y}}(z)+\epsilon B
    \end{align}
    Therefore, we have $ \lim_{\delta\downarrow0}\partial_{\delta} \mathcal{P}_{\mathcal{Y}}(z)\subseteq\partial \mathcal{P}_{\mathcal{Y}}(z)$. Finally, we obtain $\lim_{\delta\downarrow0}\partial_{\delta} \mathcal{P}_{\mathcal{Y}}(z)=\partial \mathcal{P}_{\mathcal{Y}}(z)$.
\end{proof}

\section{Proof of Proposition \ref{prop1}}
\begin{proof}
    Let $u\in\mathbb{R}^d$ denote a random variable distributed uniformly on $\mathbb{B}_1(0)$. Since $\mathcal{P}_{\mathcal{Y}}(\cdot)$ is 1-Lipschitz, we have
    \begin{align}
        \|\mathcal{P}_{\mathcal{Y\delta}}(z)-\mathcal{P}_{\mathcal{Y}}(z)\|=\left\|\mathbb{E}[\mathcal{P}_{\mathcal{Y}}(z+\delta u)-\mathcal{P}_{\mathcal{Y}}(z)]\right\|\leq \delta \cdot 1\cdot\mathbb{E}[\|u\|]=\delta
    \end{align}
    Since $\mathcal{P}_{\mathcal{Y}}(z)$ is 1-Lipschitz, we have 
    \begin{align}
        &\|\mathcal{P}_{\mathcal{Y\delta}}(z)-\mathcal{P}_{\mathcal{Y\delta}}(z')\|
        =\left\|\mathbb{E}[\mathcal{P}_{\mathcal{Y}}(z+\delta u)-\mathcal{P}_{\mathcal{Y}}(z'+\delta u)]\right\|\leq \|\mathbb{E}[\|z-z'\|]\|=\|z-z'\|
    \end{align}

    For the Jacobian matrix, we have 
     \begin{align}
         \nabla\mathcal{P}_{\mathcal{Y\delta}}(z)=\begin{bmatrix}
             (\nabla\mathcal{P}_{\mathcal{Y\delta}}^1(z))^{\top}\\
             \vdots\\
             (\nabla\mathcal{P}_{\mathcal{Y\delta}}^{d_2}(z))^{\top}
         \end{bmatrix}
     \end{align}
     For each element $\nabla\mathcal{P}_{\mathcal{Y\delta}}^{i}(z)$, $(i\in[1,\cdots,d_2])$, according to Theorem 3.1 in \cite{lin2022gradient}, we have $\nabla\mathcal{P}_{\mathcal{Y\delta}}^{i}(z)=\mathbb{E}[\nabla\mathcal{P}_{\mathcal{Y}}^i(z+\delta u)]$. Therefore, we can easily obtain $\nabla\mathcal{P}_{\mathcal{Y\delta}}(z)=\mathbb{E}[\nabla\mathcal{P}_{\mathcal{Y}}(z+\delta u)]$. Then, according to \cite{lin2022gradient}, we have 
     \begin{align}
        &\|\nabla\mathcal{P}_{\mathcal{Y\delta}}(z)-\nabla\mathcal{P}_{\mathcal{Y\delta}}(z')\|
         \leq\|\nabla\mathcal{P}_{\mathcal{Y\delta}}(z)-\nabla\mathcal{P}_{\mathcal{Y\delta}}(z')\|_F
         =\left(\sum_{i=1}^{d_2} \|\nabla\mathcal{P}_{\mathcal{Y\delta}}^i(z)-\nabla\mathcal{P}_{\mathcal{Y\delta}}^i(z')\|^2 \right)^{1/2}\nonumber\\
         \leq&\left(\sum_{i=1}^{d_2} (\frac{cL_p\sqrt{d_2}}{\delta}\|z-z'\|)^2 \right)^{1/2}=\frac{cd_2L_p}{\delta}\|z-z'\|
     \end{align}
     where $c>0$ is a constant.
\end{proof}
\section{Proof of Proposition \ref{prop2}}
\begin{proof}
    According to Theorem 3.1 in \cite{lin2022gradient}, for each element $\nabla\mathcal{P}_{\mathcal{Y}\delta}^i(z)$ in $\nabla\mathcal{P}_{\mathcal{Y}\delta}(z)$, we have $\nabla\mathcal{P}_{\mathcal{Y}\delta}^i(z)\in \partial_{\delta} \mathcal{P}_{\mathcal{Y}}^i(z)$. Therefore, we have $\nabla\mathcal{P}_{\mathcal{Y}\delta}(z)\in \partial_{\delta} \mathcal{P}_{\mathcal{Y}}(z)$
\end{proof}

\section{Lipschitz Continuousness of $\nabla F_{\delta}(x)$}
Here, we prove $\nabla F_{\delta}(x)$ is Lipschitz continuous. We first give several useful lemmas.

\begin{lemma}\label{lemma:lip_f_y}
    (\textbf{Lipschitz continuous of the approximation of hypergradient on $x$ and $y$}.) Under Assumptions \ref{assump:upper_level}, \ref{assump:lower_level},  $\nabla f_{\delta}(x,y)$ is Lipschitz continuous on $y\in\mathcal{Y}$ and $x\in\mathcal{X}$, respectively, such that we have 
    \begin{align}
        &\|\nabla f_{\delta}(x,y_1)-\nabla f_{\delta}(x,y_2)\|\leq L_1(\frac{d_2}{\delta})\|y_1-y_2\|\\
        &\|\nabla f_{\delta}(x_1,y)-\nabla f_{\delta}(x_2,y)\|\leq L_2(\frac{d_2}{\delta})\|x_1-x_2\|
    \end{align}
    where $L_1(\frac{d_2}{\delta})=L_f+ \frac{L_{gxy}C_{fy}}{\mu_g}+ \frac{C_{gxy}C_{fy}}{\mu_g}(\frac{cd_2}{\delta}+\eta \frac{cd_2}{\delta}L_{g})+  C_{gxy}C_{fy}\frac{1}{\eta\mu_g^2}(\frac{cd_2}{\delta}+\eta \frac{cd_2}{\delta}L_{g})(1-\eta\mu_g)+\frac{L_{gyy}}{\mu_g^2}+ \frac{C_{gxy}}{\mu_g}L_{f}$ and $L_2(\frac{d_2}{\delta})=L_f+ \frac{L_{gxy}C_{fy}}{\mu_g}+ \frac{C_{gxy}C_{fy}}{\mu_g}\eta\frac{cd_2}{\delta}L_{g}+  C_{gxy}C_{fy}\left(\eta\frac{1}{\mu_g^2}\frac{cd_2}{\delta}L_{g}(1-\eta\mu_g)+\frac{L_{gyy}}{\mu_g^2}\right)+ \frac{C_{gxy}}{\mu_g}L_{f}$
\end{lemma}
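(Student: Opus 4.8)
The plan is to prove Lemma~\ref{lemma:lip_f_y} by a direct decomposition argument, bounding the difference of the hypergradient approximation term-by-term. Recall that
\begin{align}
    \nabla f_{\delta}(x,y)
    =&\nabla_x f(x,y)- \eta \nabla_{xy}^2g(x,y) \nabla\mathcal{P}_{\mathcal{Y}\delta}(z)^{\top}\nonumber\\
    &\cdot M(x,y)^{-1}\nabla_yf(x,y),\nonumber
\end{align}
where I abbreviate $M(x,y)=I_{d_2}-(I_{d_2}-\eta\nabla_{yy}^2g(x,y))\nabla\mathcal{P}_{\mathcal{Y}\delta}(z)^{\top}$ and $z=y-\eta\nabla_yg(x,y)$. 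The first step is to collect the Lipschitz constants and uniform bounds on each factor appearing here. From Assumption~\ref{assump:upper_level} we have $\|\nabla_yf\|\le C_{fy}$ and $L_f$-Lipschitzness of $\nabla_xf,\nabla_yf$; from Assumption~\ref{assump:lower_level} we have $\|\nabla_{xy}^2g\|\le C_{gxy}$, the Lipschitz constants $L_{gxy},L_{gyy}$ of the second-order terms, and strong convexity/smoothness giving $\mu_g,L_g$. From Proposition~\ref{prop1}, $\nabla\mathcal{P}_{\mathcal{Y}\delta}$ is $1$-Lipschitz as an operator (so $\|\nabla\mathcal{P}_{\mathcal{Y}\delta}\|\le 1$) and has $\tfrac{cd_2L_p}{\delta}$-Lipschitz Jacobian, i.e.\ $\|\nabla\mathcal{P}_{\mathcal{Y}\delta}(z)-\nabla\mathcal{P}_{\mathcal{Y}\delta}(z')\|\le \tfrac{cd_2L_p}{\delta}\|z-z'\|$. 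The spectral bound $\|(I_{d_2}-\eta\nabla_{yy}^2g)\nabla\mathcal{P}_{\mathcal{Y}\delta}^{\top}\|<1-\eta\mu_g$ (valid for $\eta<1/\mu_g$) yields $\|M^{-1}\|\le \tfrac{1}{\eta\mu_g}$ via the Neumann bound $\|M^{-1}\|\le \tfrac{1}{1-(1-\eta\mu_g)}$.

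Next I would apply the standard ``product of factors'' telescoping: to bound $\|\Phi(a)-\Phi(b)\|$ for a product $\Phi=\prod_j \Phi_j$, write it as a sum over $j$ of terms in which only the $j$-th factor is differenced while the others are evaluated at $a$ or $b$ and bounded by their uniform norms. Concretely, I split $\nabla f_\delta(x,y_1)-\nabla f_\delta(x,y_2)$ into five groups according to which factor changes: (i) the direct term $\nabla_xf(x,y_1)-\nabla_xf(x,y_2)$, giving $L_f$; (ii) the difference in $\nabla_{xy}^2g$, giving $L_{gxy}$ times the product of the remaining bounds $\tfrac{C_{fy}}{\mu_g}$; (iii) the difference in $\nabla\mathcal{P}_{\mathcal{Y}\delta}(z)^{\top}$, which by the $\tfrac{cd_2}{\delta}$-type Lipschitz bound and the chain $\|z_1-z_2\|\le(1+\eta L_g)\|y_1-y_2\|$ produces the $\tfrac{cd_2}{\delta}$-dependent terms; (iv) the difference in $M^{-1}$, handled by the resolvent identity $M_1^{-1}-M_2^{-1}=M_1^{-1}(M_2-M_1)M_2^{-1}$ and then bounding $\|M_2-M_1\|$ through its dependence on $\nabla_{yy}^2g$ (giving $L_{gyy}$) and on $\nabla\mathcal{P}_{\mathcal{Y}\delta}(z)$ (giving another $\tfrac{cd_2}{\delta}(1-\eta\mu_g)$ term, since $M^{-1}$ appears twice one factor of $\tfrac{1}{\eta\mu_g^2}$ enters); and (v) the difference in the final $\nabla_yf(x,y)$ factor, giving $\tfrac{C_{gxy}}{\mu_g}L_f$. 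Summing these matches the stated $L_1(\tfrac{d_2}{\delta})$ exactly, and the $x$-Lipschitz bound $L_2(\tfrac{d_2}{\delta})$ follows identically except that $z$ now varies through its $x$-dependence via $\nabla_yg$, which is why the $(1+\eta L_g)$ factor in case (iii) is replaced by $\eta L_g$ (there is no ``$+1$'' since $y$ is held fixed) and the direct Jacobian terms again contribute.

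The main obstacle I anticipate is the factor (iv), controlling the perturbation of the matrix inverse $M^{-1}$. This requires two separate things to hold: a uniform lower bound on the invertibility margin of $M$ so that $\|M^{-1}\|$ stays controlled by $\tfrac{1}{\eta\mu_g}$ uniformly in $(x,y)$, and a Lipschitz estimate on $M$ itself that correctly tracks the $\tfrac{cd_2}{\delta}$ dependence coming from $\nabla\mathcal{P}_{\mathcal{Y}\delta}$. The resolvent identity reduces the inverse perturbation to $\|M^{-1}\|^2\cdot\|M_1-M_2\|$, and the $\tfrac{1}{\eta\mu_g^2}$ prefactor in $L_1$ is precisely $\tfrac{1}{\eta\mu_g}\cdot\tfrac{1}{\mu_g}$ arising from $\|M^{-1}\|^2$ combined with the $\eta$ scaling in $M$; getting the bookkeeping of these $\eta$ and $(1-\eta\mu_g)$ factors to line up with the claimed constants is the delicate part. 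Once the invertibility margin is established uniformly (which follows from $\eta<1/\mu_g$ and $\|\nabla\mathcal{P}_{\mathcal{Y}\delta}\|\le 1$, both independent of the point), the remaining steps are routine triangle-inequality bookkeeping, and I would collect all contributions to read off $L_1$ and $L_2$.
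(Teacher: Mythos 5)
Your proposal is correct and follows essentially the same route as the paper's proof: the same five-way telescoping decomposition of the product, the same uniform bounds ($\|\nabla\mathcal{P}_{\mathcal{Y}\delta}\|\leq 1$, $\|M^{-1}\|\leq \frac{1}{\eta\mu_g}$ via the Neumann bound), the same resolvent identity $\|M_1^{-1}-M_2^{-1}\|\leq\|M_1^{-1}\|\|M_1-M_2\|\|M_2^{-1}\|$ for the inverse perturbation, and the same observation that the only change for $L_2$ is replacing the factor $(1+\eta L_g)$ by $\eta L_g$ in the bound on $\|z_1-z_2\|$. The bookkeeping of the $\eta$, $(1-\eta\mu_g)$, and $\frac{cd_2}{\delta}$ factors you describe matches the paper's derivation of $L_1$ and $L_2$ exactly.
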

\begin{proof}
    Using the definition of $\nabla f_{\delta}(x,y)$, $z_1=y_1-\nabla_yg(x,y_1)$ and $z_2=y_2-\nabla_yg(x,y_2)$, we have
    \begin{align}
        &\|\nabla f_{\delta}(x,y_1)-\nabla f_{\delta}(x,y_2)\|\nonumber\\
        =&\|\nabla_x f(x,y_1)- \eta \nabla_{xy}^2g(x,y_1) \nabla\mathcal{P}_{\mathcal{Y}\delta}(z_1)^{\top}\left[(I_{d_2}-(I_{d_2}-\eta\nabla_{yy}^2g(x,y_1))\nabla\mathcal{P}_{\mathcal{Y}\delta}(z_1)^{\top}\right]^{-1}\nabla_yf(x,y_1)\nonumber\\
        &-\nabla_x f(x,y_2)+ \eta \nabla_{xy}^2g(x,y_2) \nabla\mathcal{P}_{\mathcal{Y}\delta}(z_2)^{\top}\left[(I_{d_2}-(I_{d_2}-\eta\nabla_{yy}^2g(x,y_2))\nabla\mathcal{P}_{\mathcal{Y}\delta}(z_2)^{\top}\right]^{-1}\nabla_yf(x,y_2)\|\nonumber\\
        \leq&\|\nabla_xf(x,y_1)-\nabla f(x,y_2)\|\nonumber\\
        &+\eta\|\nabla_{xy}^2g(x,y_2)-\nabla_{xy}^2g(x,y_1)\| \|\nabla\mathcal{P}_{\mathcal{Y}\delta}(z_2)\|\|\left[I_{d_2}-(I_{d_2}-\eta\nabla_{yy}^2g(x,y_2))\nabla\mathcal{P}_{\mathcal{Y}\delta}(z_2)^{\top}\right]^{-1}\|\|\nabla_yf(x,y_2)\|\nonumber\\
        &+\eta \|\nabla_{xy}^2g(x,y_1)\| \|\nabla\mathcal{P}_{\mathcal{Y}\delta}(z_2)-\nabla\mathcal{P}_{\mathcal{Y}\delta}(z_1)\|\|\left[I_{d_2}-(I_{d_2}-\eta\nabla_{yy}^2g(x,y_2))\nabla\mathcal{P}_{\mathcal{Y}\delta}(z_2)^{\top}\right]^{-1}\|\|\nabla_yf(x,y_2)\|\nonumber\\
        &+\eta \|\nabla_{xy}^2g(x,y_1)\| \|\nabla\mathcal{P}_{\mathcal{Y}\delta}(z_1)\|\|\left[I_{d_2}-(I_{d_2}-\eta\nabla_{yy}^2g(x,y_2))\nabla\mathcal{P}_{\mathcal{Y}\delta}(z_2)^{\top}\right]^{-1}\nonumber\\
        &-\left[I_{d_2}-(I_{d_2}-\eta\nabla_{yy}^2g(x,y_1))\nabla\mathcal{P}_{\mathcal{Y}\delta}(z_1)^{\top}\right]^{-1}\|\|\nabla_yf(x,y_2)\|\nonumber\\
        &+\eta \|\nabla_{xy}^2g(x,y_1)\| \|\nabla\mathcal{P}_{\mathcal{Y}\delta}(z_1)\|\|\left[I_{d_2}-(I_{d_2}-\eta\nabla_{yy}^2g(x,y_1))\nabla\mathcal{P}_{\mathcal{Y}\delta}(z_1)^{\top}\right]^{-1}\|\|\nabla_yf(x,y_2)-\nabla_yf(x,y_1)\|
    \end{align}
    We have 
    \begin{align}
        &\|\nabla_xf(x,y_1)-\nabla_x f(x,y_2)\|\leq L_f\|y_1-y_2\|,\\ 
        &\|\nabla_yf(x,y_1)-\nabla_y f(x,y_2)\|\leq L_f\|y_1-y_2\|, \\
        &\|\nabla_{xy}^2g(x,y_2)-\nabla_{xy}^2g(x,y_1)\|\leq L_{gxy}\|y_2-y_1\|,\\
        &\|\nabla \mathcal{P}_{\mathcal{Y}\delta}(z_1)\|\leq1, \quad\|\nabla f(x,y_2)\|\leq C_{fy},\quad\|\nabla_{xy}^2g(x,y_1)\|\leq C_{gxy}
    \end{align}

     Since $\|(I_{d_2}-\eta\nabla_{yy}^2g(x,y_2))\nabla\mathcal{P}_{\mathcal{Y}\delta}(z_2)^{\top}\|\leq\|\nabla\mathcal{P}_{\mathcal{Y}\delta}(z_2)\|\|I_{d_2}-\eta\nabla_{yy}^2g(x,y_2)\|\leq1-\eta\mu_g\leq1$ we have 
    \begin{align}
        &\|\left[I_{d_2}-(I_{d_2}-\eta\nabla_{yy}^2g(x,y_2))\nabla\mathcal{P}_{\mathcal{Y}\delta}(z_2)^{\top}\right]^{-1}\|\nonumber\\
        \leq&\frac{1}{1-\|(I_{d_2}-\eta\nabla_{yy}^2g(x,y_2))\nabla\mathcal{P}_{\mathcal{Y}\delta}(z_2)^{\top}\|}\nonumber\\
        \leq&\frac{1}{\eta\mu_g}
    \end{align}
    \begin{align}
        &\|\nabla\mathcal{P}_{\mathcal{Y}\delta}(z_2)-\nabla\mathcal{P}_{\mathcal{Y}\delta}(z_1)\|\nonumber\\
        \leq&\frac{cd_2}{\delta}\|z_2-z_1\|\nonumber\\
        =&\frac{cd_2}{\delta}\|y_2-\eta\nabla_yg(x,y_2)-y_1+\eta\nabla_yg(x,y_1)\|\nonumber\\
        \leq&\frac{cd_2}{\delta}\|y_2-y_1\|+\eta \frac{cd_2}{\delta}\|\nabla_yg(x,y_1)-\eta\nabla_yg(x,y_2)\|\nonumber\\
        \leq&(\frac{cd_2}{\delta}+\eta \frac{cd_2}{\delta}L_{g})\|y_2-y_1\|
    \end{align}
    Using the inequality $\|H_2^{-1}-H_1^{-1}\|\leq\|H_1^{-1}(H_1-H_2)H_2^{-1}\|\leq\|H_1^{-1}\|\|H_1-H_2\|\|H_2^{-1}\|$, we have
    \begin{align}
        &\|\left[I_{d_2}-(I_{d_2}-\eta\nabla_{yy}^2g(x,y_2))\nabla\mathcal{P}_{\mathcal{Y}\delta}(z_2)^{\top}\right]^{-1}-\left[I_{d_2}-(I_{d_2}-\eta\nabla_{yy}^2g(x,y_1))\nabla\mathcal{P}_{\mathcal{Y}\delta}(z_1)^{\top}\right]^{-1}\|\nonumber\\
        \leq&\frac{1}{\eta^2\mu_g^2}\|(I_{d_2}-\eta\nabla_{yy}^2g(x,y_2))\nabla\mathcal{P}_{\mathcal{Y}\delta}(z_2)^{\top}-(I_{d_2}-\eta\nabla_{yy}^2g(x,y_1))\nabla\mathcal{P}_{\mathcal{Y}\delta}(z_1)^{\top}\|\nonumber\\
        \leq&\frac{1}{\eta^2\mu_g^2}\|\nabla\mathcal{P}_{\mathcal{Y}\delta}(z_2)-\nabla\mathcal{P}_{\mathcal{Y}\delta}(z_1)\|\|I_{d_2}-\eta\nabla_{yy}^2g(x,y_2)\|\nonumber\\
        &+\frac{1}{\eta^2\mu_g^2}\|\nabla\mathcal{P}_{\mathcal{Y}\delta}(z_1)\|\|I_{d_2}-\eta\nabla_{yy}^2g(x,y_2)-I_{d_2}+\eta\nabla_{yy}^2g(x,y_1)\|\nonumber\\
        \leq&\left(\frac{1}{\eta^2\mu_g^2}(\frac{cd_2}{\delta}+\eta \frac{cd_2}{\delta}L_{g})(1-\eta\mu_g)+\frac{L_{gyy}}{\eta\mu_g^2}\right)\|y_2-y_1\|
    \end{align}
    Therefore, using the above inequalities, we obtain
    \begin{align}
        &\|\nabla f_{\delta}(x,y_1)-\nabla f_{\delta}(x,y_2\nabla f_{\delta}(x,y))\|\nonumber\\
        \leq&L_f+ \frac{L_{gxy}C_{fy}}{\mu_g}+ \frac{C_{gxy}C_{fy}}{\mu_g}(\frac{cd_2}{\delta}+\eta \frac{cd_2}{\delta}L_{g})\nonumber\\
        &+  C_{gxy}C_{fy}\left(\frac{1}{\eta\mu_g^2}(\frac{cd_2}{\delta}+\eta \frac{cd_2}{\delta}L_{g})(1-\eta\mu_g)+\frac{L_{gyy}}{\mu_g^2}\right)+ \frac{C_{gxy}}{\mu_g}L_{f}\|y_2-y_1\|
    \end{align}

    For the second statement, let $z_1=y-\nabla_yg(x_1,y)$ and $z_2=y-\nabla_yg(x_2,y)$, we have
    \begin{align}
        &\|\nabla f_{\delta}(x_1,y)-\nabla f_{\delta}(x_2,y)\|\nonumber\\
        =&\|\nabla_x f(x_1,y)- \eta \nabla_{xy}^2g(x_1,y) \nabla\mathcal{P}_{\mathcal{Y}\delta}(z_1)^{\top}\left[I_{d_2}-(I_{d_2}-\eta\nabla_{yy}^2g(x_1,y))\nabla\mathcal{P}_{\mathcal{Y}\delta}(z_1)^{\top}\right]^{-1}\nabla_yf(x_1,y)\nonumber\\
        &-\nabla_x f(x_2,y)+ \eta \nabla_{xy}^2g(x_2,y) \nabla\mathcal{P}_{\mathcal{Y}\delta}(z_2)^{\top}\left[I_{d_2}-(I_{d_2}-\eta\nabla_{yy}^2g(x_2,y))\nabla\mathcal{P}_{\mathcal{Y}\delta}(z_2)^{\top}\right]^{-1}\nabla_yf(x_2,y)\|\nonumber\\
        \leq&\|\nabla_xf(x_1,y)-\nabla f(x_2,y)\|\nonumber\\
        &+\eta\|\nabla_{xy}^2g(x_2,y)-\nabla_{xy}^2g(x_1,y)\| \|\nabla\mathcal{P}_{\mathcal{Y}\delta}(z_2)\|\|\left[I_{d_2}-(I_{d_2}-\eta\nabla_{yy}^2g(x_2,y))\nabla\mathcal{P}_{\mathcal{Y}\delta}(z_2)^{\top}\right]^{-1}\|\|\nabla_yf(x_2,y)\|\nonumber\\
        &+\eta \|\nabla_{xy}^2g(x_1,y)\| \|\nabla\mathcal{P}_{\mathcal{Y}\delta}(z_2)-\nabla\mathcal{P}_{\mathcal{Y}\delta}(z_1)\|\|\left[I_{d_2}-(I_{d_2}-\eta\nabla_{yy}^2g(x_2,y))\nabla\mathcal{P}_{\mathcal{Y}\delta}(z_2)^{\top}\right]^{-1}\|\|\nabla_yf(x_2,y)\|\nonumber\\
        &+\eta \|\nabla_{xy}^2g(x_1,y)\| \|\nabla\mathcal{P}_{\mathcal{Y}\delta}(z_1)\|\|\left[I_{d_2}-(I_{d_2}-\eta\nabla_{yy}^2g(x_2,y))\nabla\mathcal{P}_{\mathcal{Y}\delta}(z_2)^{\top}\right]^{-1}\nonumber\\
        &-\left[I_{d_2}-(I_{d_2}-\eta\nabla_{yy}^2g(x_1,y))\nabla\mathcal{P}_{\mathcal{Y}\delta}(z_1)^{\top}\right]^{-1}\|\|\nabla_yf(x_2,y)\|\nonumber\\
        &+\eta \|\nabla_{xy}^2g(x_1,y)\| \|\nabla\mathcal{P}_{\mathcal{Y}\delta}(z_1)\|\|\left[I_{d_2}-(I_{d_2}-\eta\nabla_{yy}^2g(x_1,y))\nabla\mathcal{P}_{\mathcal{Y}\delta}(z_1)^{\top}\right]^{-1}\|\|\nabla_yf(x_2,y)-\nabla_yf(x_1,y)\|
    \end{align}
    We have 
    \begin{align}
        &\|\nabla_xf(x_1,y)-\nabla_x f(x_2,y)\|\leq L_{f}\|x_2-x_1\|\\
        &\|\nabla_yf(x_1,y)-\nabla_y f(x_2,y)\|\leq L_{f}\|x_2-x_1\|\\
        &\|\nabla_{xy}^2g(x_2,y)-\nabla_{xy}^2g(x_1,y)\|\leq L_{gxy}\|x_2-x_1\|
    \end{align}
    \begin{align}
        &\|\nabla\mathcal{P}_{\mathcal{Y}\delta}(z_2)-\nabla\mathcal{P}_{\mathcal{Y}\delta}(z_1)\|\nonumber\\
        \leq&\frac{cd_2}{\delta}\|z_2-z_1\|\nonumber\\
        =&\frac{cd_2}{\delta}\|y-\eta\nabla_yg(x_2,y)-y+\eta\nabla_yg(x_1,y)\|\nonumber\\
        \leq&\eta \frac{cd_2}{\delta}\|\nabla_yg(x_1,y)-\eta\nabla_yg(x_2,y)\|\nonumber\\
        \leq&\eta \frac{cd_2}{\delta}L_{g}\|x_1-x_2\|
    \end{align}
    \begin{align}
        &\|\left[I_{d_2}-(I_{d_2}-\eta\nabla_{yy}^2g(x_2,y))\nabla\mathcal{P}_{\mathcal{Y}\delta}(z_2)^{\top}\right]^{-1}-\left[I_{d_2}-(I_{d_2}-\eta\nabla_{yy}^2g(x_1,y))\nabla\mathcal{P}_{\mathcal{Y}\delta}(z_1)^{\top}\right]^{-1}\|\nonumber\\
        \leq&\frac{1}{\eta^2\mu_g^2}\|(I_{d_2}-\eta\nabla_{yy}^2g(x_1,y))\nabla\mathcal{P}_{\mathcal{Y}\delta}(z_1)^{\top}-(I_{d_2}-\eta\nabla_{yy}^2g(x_2,y))\nabla\mathcal{P}_{\mathcal{Y}\delta}(z_2)^{\top}\|\nonumber\\
        \leq&\frac{1}{\eta^2\mu_g^2}\|\nabla\mathcal{P}_{\mathcal{Y}\delta}(z_1)-\nabla\mathcal{P}_{\mathcal{Y}\delta}(z_2)\|\|I_{d_2}-\eta\nabla_{yy}^2g(x_1,y)\|\nonumber\\
        &+\frac{1}{\eta^2\mu_g^2}\|\nabla\mathcal{P}_{\mathcal{Y}\delta}(z_2)\|\|I_{d_2}-\eta\nabla_{yy}^2g(x_1,y)-I_{d_2}+\eta\nabla_{yy}^2g(x_2,y)\|\nonumber\\
        \leq&\left(\frac{1}{\eta\mu_g^2}\frac{cd_2}{\delta}L_{g}(1-\eta\mu_g)+\frac{L_{gyy}}{\eta\mu_g^2}\right)\|x_1-x_2\|
    \end{align}
    
    Therefore, we have 
    \begin{align}
        &\|\nabla f_{\delta}(x_1,y)-\nabla f_{\delta}(x_2,y)\|\nonumber\\
        \leq&\left(L_f+ \frac{L_{gxy}C_{fy}}{\mu_g}+ \frac{C_{gxy}C_{fy}}{\mu_g}\eta\frac{cd_2}{\delta}L_{g}+  C_{gxy}C_{fy}\left(\eta\frac{1}{\mu_g^2}\frac{cd_2}{\delta}L_{g}(1-\eta\mu_g)+\frac{L_{gyy}}{\mu_g^2}\right)+ \frac{C_{gxy}}{\mu_g}L_{f}\right)\|x_1-x_2\|
    \end{align}
\end{proof}


\subsection{Proof of Lemma \ref{lemma:lip_f_x}}
Here we first give a detailed version of Lemma \ref{lemma:lip_f_x} and then give the proof.
\begin{lemma} (\textbf{Lipschitz continous of $\nabla F_{\delta}(x)$.})
    Under Assumptions \ref{assump:upper_level}, \ref{assump:lower_level} and Lemma \ref{lemma:bound_of_nablaP}, we have $\nabla F_{\delta}(x)$ is Lipschitz continuous w.r.t $x$, such that
    \begin{align}
         &\|\nabla F_{\delta}(x_1)-\nabla F_{\delta}(x_2)\|
         \leq L_{F_{\delta}}(\frac{d_2}{\delta})\|x_1-x_2\|
    \end{align}
    where $L_{F_{\delta}}(\frac{d_2}{\delta})=\frac{L_g}{\mu_g}L_1(\frac{d_2}{\delta})+L_2(\frac{d_2}{\delta})$, $L_1(\frac{d_2}{\delta})=L_f+ \frac{L_{gxy}C_{fy}}{\mu_g}+ \frac{C_{gxy}C_{fy}}{\mu_g}(\frac{cd_2}{\delta}+\eta \frac{cd_2}{\delta}L_{g})+  C_{gxy}C_{fy}(\frac{1}{\eta\mu_g^2}(\frac{cd_2}{\delta}+\eta \frac{cd_2}{\delta}L_{g})(1-\eta\mu_g)+\frac{L_{gyy}}{\mu_g^2})+ \frac{C_{gxy}}{\mu_g}L_{f}$ and $L_2(\frac{d_2}{\delta})=L_f+ \frac{L_{gxy}C_{fy}}{\mu_g}+ \frac{C_{gxy}C_{fy}}{\mu_g}\eta\frac{cd_2}{\delta}L_{g}+  C_{gxy}C_{fy}(\eta\frac{1}{\mu_g^2}\frac{cd_2}{\delta}L_{g}(1-\eta\mu_g)+\frac{L_{gyy}}{\mu_g^2})+ \frac{C_{gxy}}{\mu_g}L_{f}$.
\end{lemma}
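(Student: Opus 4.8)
The plan is to exploit the identity $\nabla F_{\delta}(x) = \nabla f_{\delta}(x, y^*(x))$, which follows immediately from comparing the two definitions: $\nabla F_{\delta}$ is nothing but $\nabla f_{\delta}(x,y)$ evaluated at the optimal lower-level solution $y^*(x)$ (with $z^* = y^*(x) - \eta \nabla_y g(x,y^*(x))$). This reduces the claim to controlling how $\nabla f_{\delta}$ varies under composition with the solution map $y^*(\cdot)$. The genuinely heavy analysis — bounding the change of $\nabla f_{\delta}(x,y)$ in each argument, including the differences of the matrix inverses $[I_{d_2} - (I_{d_2}-\eta\nabla_{yy}^2 g)\nabla\mathcal{P}_{\mathcal{Y}\delta}^{\top}]^{-1}$ — has already been discharged in Lemma \ref{lemma:lip_f_y}, so at this stage only a bookkeeping argument remains.

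First I would introduce the intermediate point $\nabla f_{\delta}(x_2, y^*(x_1))$ and split by the triangle inequality:
\begin{align}
&\|\nabla F_{\delta}(x_1) - \nabla F_{\delta}(x_2)\| \nonumber\\
&= \|\nabla f_{\delta}(x_1, y^*(x_1)) - \nabla f_{\delta}(x_2, y^*(x_2))\| \nonumber\\
&\leq \|\nabla f_{\delta}(x_1, y^*(x_1)) - \nabla f_{\delta}(x_2, y^*(x_1))\| \nonumber\\
&\quad + \|\nabla f_{\delta}(x_2, y^*(x_1)) - \nabla f_{\delta}(x_2, y^*(x_2))\|. \nonumber
\end{align}
The first term holds the second argument fixed at $y^*(x_1)$ and varies only $x$, so the second statement of Lemma \ref{lemma:lip_f_y} bounds it by $L_2(\tfrac{d_2}{\delta})\|x_1 - x_2\|$. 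The second term holds the first argument fixed at $x_2$ and varies only the second, so the first statement of Lemma \ref{lemma:lip_f_y} bounds it by $L_1(\tfrac{d_2}{\delta})\|y^*(x_1) - y^*(x_2)\|$.

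Next I would chain in the Lipschitz continuity of the lower-level solution map from Lemma \ref{lemma:Lip_y_star}, namely $\|y^*(x_1) - y^*(x_2)\| \leq \tfrac{L_g}{\mu_g}\|x_1 - x_2\|$, converting the second term into $\tfrac{L_g}{\mu_g}L_1(\tfrac{d_2}{\delta})\|x_1 - x_2\|$. Summing the two contributions gives exactly $L_{F_{\delta}}(\tfrac{d_2}{\delta}) = \tfrac{L_g}{\mu_g}L_1(\tfrac{d_2}{\delta}) + L_2(\tfrac{d_2}{\delta})$, as claimed.

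Since all the analytic difficulty is absorbed into the two one-variable estimates of Lemma \ref{lemma:lip_f_y}, no substantial obstacle remains here; the only point requiring care is that the intermediate point be chosen so that each telescoped difference perturbs exactly one argument, matching the hypotheses of the corresponding statement of Lemma \ref{lemma:lip_f_y}. I would also stress qualitatively that $L_1$ and $L_2$ grow like $d_2/\delta$ through the gradient-Lipschitz constant of the smoothed projection established in Proposition \ref{prop1}, so $L_{F_{\delta}}$ inherits the same $d_2/\delta$ dependence — the feature that ultimately propagates into the $d_2$ and $\delta$ factors of the convergence rate.
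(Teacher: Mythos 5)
Your proof is correct and follows essentially the same route as the paper's: write $\nabla F_{\delta}(x)=\nabla f_{\delta}(x,y^*(x))$, split by the triangle inequality through an intermediate point, invoke the two one-variable Lipschitz bounds of Lemma \ref{lemma:lip_f_y}, and chain in $\|y^*(x_1)-y^*(x_2)\|\leq \frac{L_g}{\mu_g}\|x_1-x_2\|$ from Lemma \ref{lemma:Lip_y_star}. The only (immaterial) difference is your choice of intermediate point $\nabla f_{\delta}(x_2,y^*(x_1))$ versus the paper's $\nabla f_{\delta}(x_1,y^*(x_2))$, which yields the identical constant $L_{F_{\delta}}(\frac{d_2}{\delta})=\frac{L_g}{\mu_g}L_1(\frac{d_2}{\delta})+L_2(\frac{d_2}{\delta})$.
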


\begin{proof}
    We have 
    \begin{align}
        &\|\nabla F_{\delta}(x_1)-\nabla F_{\delta}(x_2)\|\nonumber\\
        =&\|\nabla f_{\delta}(x_1,y^*(x_1))-\nabla f_{\delta}(x_2,y^*(x_2))\|\nonumber\\
        \leq&\|\nabla f_{\delta}(x_1,y^*(x_1))-\nabla f_{\delta}(x_1,y^*(x_2))\|+\|f_{\delta}(x_1,y^*(x_2))-f_{\delta}(x_2,y^*(x_2))\|
    \end{align}
    For the first term, we have
    \begin{align}
        \|\nabla f_{\delta}(x_1,y^*(x_1))-\nabla f_{\delta}(x_1,y^*(x_2))\|\leq L_1(\frac{d_2}{\delta})\|y^*(x_1)-y^*(x_2)\|\leq \frac{L_{g}}{\mu_g}L_1(\frac{d_2}{\delta})\|x_1-x_2\|
    \end{align}

    Thus, we have 
    \begin{align}
         &\|\nabla F_{\delta}(x_1)-\nabla F_{\delta}(x_2)\|
         \leq \left(L_2(\frac{d_2}{\delta})+\frac{L_g}{\mu_g}L_1(\frac{d_2}{\delta})\right)\|x_1-x_2\|
    \end{align}
\end{proof}

\section{Proof of Lemma \ref{lemma:bound_of_nablaP}}

\begin{proof}
    By the definition of $\bar{H}$, we have 
    \begin{align}
        \bar{H}=&\sum_{i=1}^{d_2}\frac{1}{2\delta}\left( \mathcal{P}_{\mathcal{Y}}(z+\delta u_i)- \mathcal{P}_{\mathcal{Y}}(z-\delta u_i) \right)u_i^{\top}
        =\begin{bmatrix}
            \sum_{i=1}^{d_2}\frac{1}{2\delta}\left( \mathcal{P}_{\mathcal{Y}}^1(z+\delta u_i)- \mathcal{P}_{\mathcal{Y}}^1(z-\delta u_i) \right)u_i^{\top}\\
            \vdots\\
            \sum_{i=1}^{d_2}\frac{1}{2\delta}\left( \mathcal{P}_{\mathcal{Y}}^{d_2}(z+\delta u_i)- \mathcal{P}_{\mathcal{Y}}^{d_2}(z-\delta u_i) \right)u_i^{\top}
        \end{bmatrix}
    \end{align}
    According to \cite{lin2022gradient}, for each element $\bar{H}_j=\sum_{i=1}^{d_2}\frac{1}{2\delta}\left( \mathcal{P}_{\mathcal{Y}}^j(z+\delta u_i)- \mathcal{P}_{\mathcal{Y}}^j(z-\delta u_i) \right)u_i^{\top}$, we have 
    \begin{align}
        \mathbb{E}[\frac{1}{d_2}\sum_{i=1}^{d_2}\frac{d_2}{2\delta}\left( \mathcal{P}_{\mathcal{Y}}^j(z+\delta u_i)- \mathcal{P}_{\mathcal{Y}}^j(z-\delta u_i) \right)u_i^{\top}]=\nabla \mathcal{P}_{\mathcal{Y}\delta}^j(z).
    \end{align}
    Therefore, we have $\mathbb{E}[\bar{H}]=\nabla \mathcal{P}_{\mathcal{Y}\delta}(z)$. According to \cite{lin2022gradient}, we have $\mathbb{E}_{u_i}[\|\frac{d_2}{2\delta}\left( \mathcal{P}_{\mathcal{Y}}^j(z+\delta u_i)- \mathcal{P}_{\mathcal{Y}}^j(z-\delta u_i) \right)u_i^{\top}\|^2]\leq 16\sqrt{2\pi}d_2L_p^2$. Then, we have 
    \begin{align}
       &\mathbb{E}[\|\bar{H}\|^2]\nonumber\\
       \leq& \mathbb{E}[\|\bar{H}\|_F^2]=\mathbb{E}[\sum_{j=1}^{d_2}\|\bar{H}_j\|^2]=\sum_{j=1}^{d_2}\mathbb{E}[\|\bar{H}_j\|^2]\nonumber
       \\
       =&\sum_{j=1}^{d_2}\mathbb{E}\left[\left\|\frac{1}{d_2}\sum_{i=1}^{d_2}\frac{d_2}{2\delta}\left( \mathcal{P}_{\mathcal{Y}}^j(z+\delta u_i)- \mathcal{P}_{\mathcal{Y}}^j(z-\delta u_i) \right)u_i^{\top}\right\|^2\right]\nonumber\\
       \leq&\sum_{j=1}^{d_2}\frac{1}{d_2^2}\sum_{i=1}^{d_2}\mathbb{E}_{u_i}\left[\left\|\frac{d_2}{2\delta}\left( \mathcal{P}_{\mathcal{Y}}^j(z+\delta u_i)- \mathcal{P}_{\mathcal{Y}}^j(z-\delta u_i) \right)u_i^{\top}\right\|^2\right]\nonumber\\
       \leq&d_2\cdot\frac{1}{d_2^2} d_2\cdot16\sqrt{2\pi}d_2L_p^2\nonumber\\
       =&16\sqrt{2\pi}d_2L_p^2
    \end{align}
    That completes the proof.
\end{proof}

\section{Proof of Lemma \ref{lemma:bounds_of_hypergrad}}
\begin{proof}
    For convenience, define $\bar{G}_{yy}=Q\prod_{i=1}^{c(Q)}\left( (I_{d_2}-\eta\nabla_{yy}^2g(x,y))\bar{H}(z;u^i)^{\top}\right)$ and $G_{yy}=\left[I_{d_2}-(I_{d_2}-\eta\nabla_{yy}^2g(x,y))\nabla\mathcal{P}_{\mathcal{Y}\delta}(z)^{\top}\right]^{-1}$. We set $\eta<\frac{1}{\mu_g}$
    We have
	\begin{align}
		&\mathbb{E}_{\bar{\xi}}[\bar{\nabla} f_{\delta}(x,y;\bar{\xi})]
		=\nabla_x f(x,y)-\eta\nabla_{xy}^2g(x,y)\nabla\mathcal{P}_{\mathcal{Y}\delta}(z)^{\top}\mathbb{E}\left[\bar{G}_{yy}\right]\nabla_yf(x,y)
	\end{align}
	We have 
	\begin{align}
		&\left\|\nabla f_{\delta}(x,y)-\mathbb{E}[\bar{\nabla} f_{\delta}(x,y;\bar{\xi})]\right\|\nonumber\\
		=&\left\|\eta
		\nabla_{xy}^2g(x,y)\partial_z\mathcal{P}_{\mathcal{Y}\delta}(z)^{\top}\left\{\mathbb{E}\left[\bar{G}_{yy}\right]-G_{yy}\right\}\nabla_yf(x,y) \right\|\nonumber\\
        \leq&\eta C_{gxy}C_{fy}
        \left\|\mathbb{E}\left[\bar{G}_{yy}\right]-G_{yy}\right\|
	\end{align}
	where the third inequality is due to the non-expansive of the projector operation.
	
	Due to the independency of $u,c(Q)$, we have 
	\begin{align}
        &\mathbb{E}\left[\bar{G}_{yy}\right]\nonumber\\
        =&\mathbb{E}\left[Q\prod_{i=1}^{c(Q)}\left( (I_{d_2}-\eta\nabla_{yy}^2g(x,y))\bar{H}(z;u^i)^{\top}\right)\right]\nonumber\\
		=&Q\mathbb{E}_{c(Q)}\left[\mathbb{E}_{u}\left[\prod_{i=1}^{c(Q)}\left( (I_{d_2}-\eta\nabla_{yy}^2g(x,y))\bar{H}(z;u^i)^{\top}\right)\right]\right]\nonumber\\
		=&Q\mathbb{E}_{c(Q)}\left[(I_{d_2}-\eta\nabla_{yy}^2g(x,y))\nabla\mathcal{P}_{\mathcal{Y}\delta}(z)^{\top}\right]^{c(Q)}\nonumber\\
        =&\sum_{i=0}^{Q-1}\left[(I_{d_2}-\eta\nabla_{yy}^2g(x,y))\nabla\mathcal{P}_{\mathcal{Y}\delta}(z)^{\top}\right]^i
	\end{align}

	In addition, we have 
	\begin{align}
        &G_{yy}\nonumber\\
		=&\left[I_{d_2}-(I_{d_2}-\eta\nabla_{yy}^2g(x,y))\nabla\mathcal{P}_{\mathcal{Y}\delta}(z)^{\top}\right]^{-1}\nonumber\\
		=&\sum_{i=0}^{\infty}\left[(I_{d_2}-\eta\nabla_{yy}^2g(x,y))\nabla\mathcal{P}_{\mathcal{Y}\delta}(z)^{\top}\right]^i\nonumber\\
		=&\mathbb{E}\left[Q\prod_{i=1}^{c(Q)}\left( (I_{d_2}-\eta\nabla_{yy}^2g(x,y))\bar{H}(z;u^i)^{\top}\right)\right]
        +\sum_{i=Q}^{\infty}\left[(I_{d_2}-\eta\nabla_{yy}^2g(x,y))\nabla\mathcal{P}_{\mathcal{Y}\delta}(z)^{\top}\right]^i\nonumber\\
	\end{align}
	where $z=y-\eta\nabla_{y}g(x,y)$,
	which implies that 
	
		\begin{align}
            &\left\|\mathbb{E}\left[\bar{G}_{yy}\right]-\bar{G}_{yy}\right\|\nonumber\\
			=&\left\|\left[(I_{d_2}-(I_{d_2}-\eta\nabla_{yy}^2g(x,y))\nabla\mathcal{P}_{\mathcal{Y}\delta}(z)^{\top}\right]^{-1}-\mathbb{E}\left[Q\prod_{i=1}^{c(Q)}\left( (I_{d_2}-\eta\nabla_{yy}^2g(x,y;\zeta^i))\bar{H}(z;u^i)^{\top}\right)\right]\right\|\nonumber\\
			\leq&\sum_{i=Q}^{\infty}\left\|(I_{d_2}-\eta\nabla_{yy}^2g(x,y))\nabla\mathcal{P}_{\mathcal{Y}\delta}(z)^{\top}\right\|^i\nonumber\\\
			\leq&\sum_{i=Q}^{\infty}\left\|I_{d_2}-\eta\nabla_{yy}^2g(x,y))\right\|^i\left\|\nabla\mathcal{P}_{\mathcal{Y}\delta}(z)\right\|^i\nonumber\\
			\leq&\frac{1}{\eta\mu_g}(1-\eta\mu_g)^Q
		\end{align}

	Thus, we have 
	\begin{align}
		&\left\|\nabla f_{\delta}(x,y)-\mathbb{E}[\bar{\nabla} f_{\delta}(x,y;\bar{\xi})]\right\|
		\leq\frac{ C_{gxy}C_{fy}}{\mu_g}(1-\eta\mu_g)^Q
	\end{align}
	
	Then, we prove the bound on the variance.
	 we have 
	\begin{align}
		&\mathbb{E}\left[\left \|\bar{\nabla} f_{\delta}(x,y;\bar{\xi})-\mathbb{E}\left[\bar{\nabla} f_{\delta}(x,y;\bar{\xi})\right]\right\|^2\right]\nonumber\\
		=&\mathbb{E}\left[\left\|\eta \nabla_{xy}^2g(x,y) \bar{H}(z;u^0)^{\top}\bar{G}_{yy}\nabla_yf(x,y)- \eta  \nabla_{xy}^2g(x,y) \nabla\mathcal{P}_{\mathcal{Y}\delta}(z)^{\top}\mathbb{E}[\bar{G}_{yy}]\nabla_yf(x,y)  \right\|^2\right]\nonumber\\
		\leq&2\eta^2\left\|\nabla_{xy}^2g(x,y)\right\|^2\mathbb{E}\left[\left\|  \bar{H}(z;u^0)- \nabla\mathcal{P}_{\mathcal{Y}\delta}(z)\right\|^2\right]\mathbb{E}\left[\left\|\bar{G}_{yy}\right\|^2\right]\left\|\nabla_yf(x,y)\right\|^2\nonumber\\
		&+2\eta^2\left\|\nabla_{xy}g(x,y)\right\|^2\left\| \nabla\mathcal{P}_{\mathcal{Y}\delta}(z)\right\|^2\mathbb{E}\left[\left\|\bar{G}_{yy}-\mathbb{E}[\bar{G}_{yy}]\right\|^2\right]\left\|\nabla_yf(x,y)\right\|^2
	\end{align}
	
	For the first term in the above inequality, we have
	\begin{align}
	    \mathbb{E}\left[\left\|  \bar{H}(z;u^0)- \nabla\mathcal{P}_{\mathcal{Y}\delta}(z)\right\|^2\right]\leq32\sqrt{2\pi}d_2L_p^2+2
	\end{align}
    
	For $\mathbb{E}[\|\bar{G}_{yy}\|^2]$, we have 
	\begin{align}
		\mathbb{E}[\|\bar{G}_{yy}\|^2]
        =&\sum_{q=0}^{Q-1}\mathbb{E}\left[\left\|\prod_{i=1}^{q}\left( (I_{d_2}-\eta\nabla_{yy}^2g(x,y))\bar{H}(z;u^i)^{\top}\right)\right\|^2\right]\nonumber\\
        \leq&\sum_{q=0}^{Q-1}((1-\eta\mu_g)4(2\pi)^{1/4}\sqrt{d_2}L_p)^{2q}\nonumber\\
        \leq&\frac{1}{1-(1-\eta\mu_g)^216\sqrt{2\pi}d_2L_p^2}
	\end{align}
	where the last inequality is obtained by setting $\frac{1}{\mu_g}(1-\frac{1}{4(2\pi)^{1/4}\sqrt{d_2}L_p})\leq\eta<\frac{1}{\mu_g}$.

    We can also derive that 
	 \begin{align}
        &\left\|\mathbb{E}\left[\bar{G}_{yy}\right]\right\|\nonumber\\
        =&\|\sum_{i=0}^{Q-1}\left[(I_{d_2}-\eta\nabla_{yy}^2g(x,y))\nabla\mathcal{P}_{\mathcal{Y}\delta}(z)^{\top}\right]^i\|\nonumber\\
        \leq&\sum_{i=0}^{Q-1}\|(I_{d_2}-\eta\nabla_{yy}^2g(x,y))\nabla\mathcal{P}_{\mathcal{Y}\delta}(z)^{\top}      \|^i\nonumber\\
        \leq&\sum_{i=0}^{Q-1}(1-\eta\mu_g)^i\nonumber\\
        \leq&\frac{1}{\eta\mu_g}
    \end{align}
     Then, we have 
    \begin{align}
        \mathbb{E}\left[\left\|G_{yy}-\mathbb{E}[G_{yy}]\right\|^2\right]\leq\frac{2}{1-(1-\eta\mu_g)^216\sqrt{2\pi}d_2L_p^2}+\frac{2}{\eta^2\mu_g^2} 
    \end{align}
	Therefore, combining the above inequalities,  we can bound the variance as follows,
\begin{align}
    &\mathbb{E}\left[\left \|\bar{\nabla}f(x,y;\bar{\xi})-\mathbb{E}\left[\bar{\nabla}f(x,y;\bar{\xi})\right]\right\|^2\right]
    \leq\sigma_f(d_2)
\end{align}
where $\sigma_f(d_2)=2\eta^2C_{gxy}^2(32\sqrt{2\pi}d_2L_p^2+2)C_{fy}^2\frac{1}{1-(1-\eta\mu_g)^216\sqrt{2\pi}d_2L_p^2}+2\eta^2C_{gxy}^2(\frac{2}{1-(1-\eta\mu_g)^216\sqrt{2\pi}d_2L_p^2}+\frac{2}{\eta^2\mu_g^2})C_{fy}^2$
	That completes the proof.
\end{proof}

\section{Route Map of Our Convergence Analysis}{\label{section:route_map}}
Here we give a simple route map of our convergence analysis.
\begin{figure}[h]
    \includegraphics[width=1\textwidth]{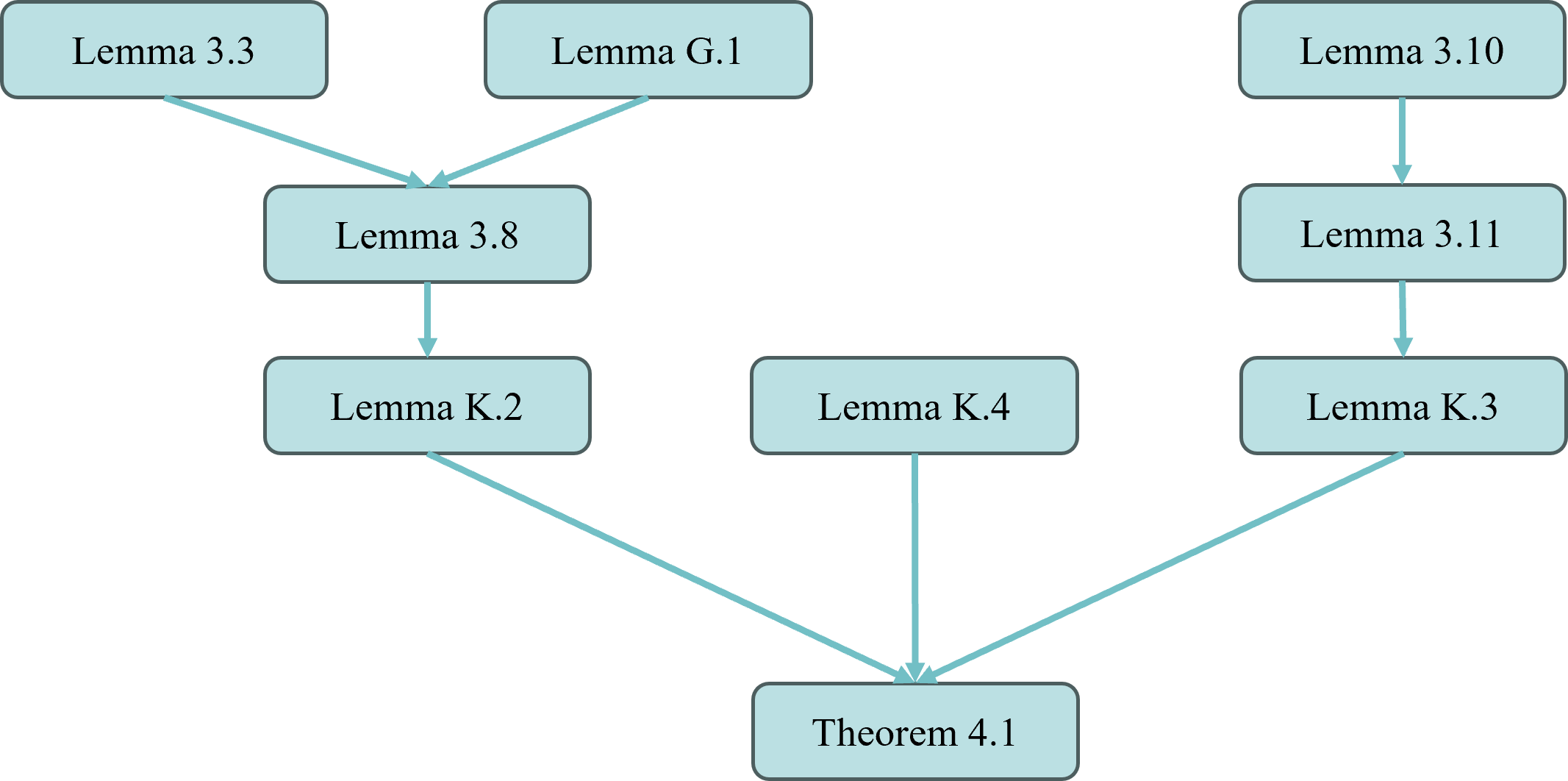}
     \caption{Route map of convergence analysis.}
    \label{fig:my_label}
\end{figure}

\section{Deriving the Convergence Metric}
In this section, we give a detailed analysis to derive the metric. Based  on the Lipshitz continuous of $\nabla F_{\delta}(x)$, we have the following lemma,
\begin{lemma}\label{lemma:boud_of_F_w}
Under Assumptions \ref{assump:upper_level}, \ref{assump:lower_level} and Lemma \ref{lemma:bound_of_nablaP}, we have 
    \begin{align}
        \|\nabla F_{\delta}(x_k)-w_{k}\|^2\leq2L_1^2(\frac{d_2}{\delta})\|y^*(x_k)-y_k\|^2+2\|\nabla f_{\delta}(x_k,y_k)-w_{k}\|^2.
    \end{align}
\end{lemma}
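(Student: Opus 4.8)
The plan is to exploit the fact that $\nabla F_{\delta}$ is nothing but the smoothed hypergradient $\nabla f_{\delta}$ evaluated at the lower-level optimum, that is, $\nabla F_{\delta}(x_k)=\nabla f_{\delta}(x_k,y^*(x_k))$, and then to split the error $\nabla F_{\delta}(x_k)-w_k$ into a lower-level optimization error (the gap between $y^*(x_k)$ and the iterate $y_k$) and a gradient-estimation error (the gap between the exact approximate hypergradient at $(x_k,y_k)$ and its momentum estimate $w_k$). Concretely, I would insert the intermediate quantity $\nabla f_{\delta}(x_k,y_k)$ and write
\begin{align}
\nabla F_{\delta}(x_k)-w_k=\bigl(\nabla f_{\delta}(x_k,y^*(x_k))-\nabla f_{\delta}(x_k,y_k)\bigr)+\bigl(\nabla f_{\delta}(x_k,y_k)-w_k\bigr). \nonumber
\end{align}

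First I would apply the elementary inequality $\|a+b\|^2\le 2\|a\|^2+2\|b\|^2$ to this decomposition, which produces the two squared-norm terms with the factor $2$ appearing in the claim. The second of these is already in the target form $2\|\nabla f_{\delta}(x_k,y_k)-w_k\|^2$, so no further manipulation is required there.

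For the first term, I would invoke the Lipschitz continuity of $\nabla f_{\delta}$ in its second argument established in Lemma \ref{lemma:lip_f_y}, namely $\|\nabla f_{\delta}(x,y_1)-\nabla f_{\delta}(x,y_2)\|\le L_1(\tfrac{d_2}{\delta})\|y_1-y_2\|$. Taking $x=x_k$, $y_1=y^*(x_k)$, $y_2=y_k$, and squaring, I obtain
\begin{align}
2\bigl\|\nabla f_{\delta}(x_k,y^*(x_k))-\nabla f_{\delta}(x_k,y_k)\bigr\|^2\le 2L_1^2(\tfrac{d_2}{\delta})\,\|y^*(x_k)-y_k\|^2. \nonumber
\end{align}
Adding this to the estimation-error term yields exactly the stated inequality.

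The argument is essentially a one-line add-and-subtract followed by Young's inequality, so there is no genuine obstacle. The only point requiring care is correctly identifying $\nabla F_{\delta}(x_k)=\nabla f_{\delta}(x_k,y^*(x_k))$ directly from the definitions of the two hypergradients, since it is precisely this identity that lets the Lipschitz-in-$y$ estimate of Lemma \ref{lemma:lip_f_y} control the discrepancy between the true smoothed hypergradient and its $y_k$-based approximation.
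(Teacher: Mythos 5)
Your proposal is correct and follows essentially the same route as the paper's own proof: insert $\nabla f_{\delta}(x_k,y_k)$, apply $\|a+b\|^2\leq 2\|a\|^2+2\|b\|^2$, and bound the first term via the Lipschitz-in-$y$ estimate of Lemma \ref{lemma:lip_f_y} together with the identity $\nabla F_{\delta}(x_k)=\nabla f_{\delta}(x_k,y^*(x_k))$. No gaps; the argument matches the paper step for step.
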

\begin{proof}
    We have 
    \begin{align}
        &\|\nabla F_{\delta}(x_k)-w_{k}\|^2\nonumber\\
        =&\|\nabla f_{\delta}(x_k,y^*(x_k))-\nabla f_{\delta}(x_k,y_k)+\nabla f_{\delta}(x_k,y_k)-w_{k}\|^2\nonumber\\
        \leq&2\|\nabla f_{\delta}(x_k,y^*(x_k))-\nabla f_{\delta}(x_k,y_k)\|^2+2\|\nabla f_{\delta}(x_k,y_k)-w_{k}\|^2\nonumber\\
        \leq&2L_1^2(\frac{d_2}{\delta})\|y^*(x_k)-y_k\|^2+2\|\nabla f_{\delta}(x_k,y_k)-w_{k}\|^2
    \end{align}
\end{proof}

\subsection{Usefull Lemmas in Convergence Rate}
\begin{lemma}\label{lemma:reduction_F_delta} (\textbf{Descent on the function value.})
	Under Assumptions \ref{assump:upper_level},  \ref{assump:lower_level}, and Lemma \ref{lemma:lip_f_x}, let $F_{\delta}(x)$ be an approximation function of $F(x)$ and have the gradient $\nabla F_{\delta}(x_k)$, and $\gamma\eta_k\leq\frac{1}{2L_{F_{\delta}}(\frac{d_2}{\delta}) c_l}$, we have 
    \begin{align}
        &F_{\delta}(x_{k+1})
        \leq F_{\delta}(x_{k})+\eta_k\gamma c_l\|\nabla F_{\delta}(x_{k})-w_{k}\|^2-\frac{\eta_k}{2\gamma c_l}\|\tilde{x}_{k+1}-x_k\|^2
    \end{align}
\end{lemma}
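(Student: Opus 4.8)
The plan is to apply the standard descent inequality for the $L_{F_\delta}(\tfrac{d_2}{\delta})$-smooth function $F_\delta$, whose gradient Lipschitzness is exactly Lemma \ref{lemma:lip_f_x}, to the consecutive iterates $x_k,x_{k+1}$, and then to exploit the specific adaptive, momentum-driven form of the $x$-update so as to convert the resulting linear term into the two quadratics appearing in the claim.

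First I would make explicit the auxiliary iterate hidden in Line~2 of Algorithm~\ref{alg:DMSCBO}. Writing $b_k := \mathcal{P}_{[1/c_u,1/c_l]}(\sqrt{m_{2,k}}+G_0)$ and defining $\tilde{x}_{k+1} := x_k - \tfrac{\gamma}{b_k}w_k$, the update becomes $x_{k+1} = (1-\eta_k)x_k + \eta_k\tilde{x}_{k+1}$, so that $x_{k+1}-x_k = \eta_k(\tilde{x}_{k+1}-x_k)$, in exact analogy with the $y$-update. Smoothness then yields
\[
F_\delta(x_{k+1}) \le F_\delta(x_k) + \eta_k\langle \nabla F_\delta(x_k),\,\tilde{x}_{k+1}-x_k\rangle + \tfrac{1}{2}L_{F_\delta}(\tfrac{d_2}{\delta})\,\eta_k^2\|\tilde{x}_{k+1}-x_k\|^2.
\]

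Next, decompose $\nabla F_\delta(x_k) = \big(\nabla F_\delta(x_k)-w_k\big) + w_k$. The term carrying $w_k$ is handled by the defining relation $\tilde{x}_{k+1}-x_k = -\tfrac{\gamma}{b_k}w_k$, which gives $\langle w_k,\tilde{x}_{k+1}-x_k\rangle = -\tfrac{b_k}{\gamma}\|\tilde{x}_{k+1}-x_k\|^2$; the clamping performed by $\mathcal{P}_{[1/c_u,1/c_l]}$ lower-bounds $\tfrac{b_k}{\gamma}$ by $\tfrac{1}{\gamma c_l}$, so this is a genuine descent term. The remaining cross term $\langle \nabla F_\delta(x_k)-w_k,\tilde{x}_{k+1}-x_k\rangle$ is split by Young's inequality with parameter $2\gamma c_l$, producing exactly $\gamma c_l\|\nabla F_\delta(x_k)-w_k\|^2$ together with $\tfrac{1}{4\gamma c_l}\|\tilde{x}_{k+1}-x_k\|^2$. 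Collecting the coefficients of $\|\tilde{x}_{k+1}-x_k\|^2$, namely $\eta_k\big(\tfrac{1}{4\gamma c_l}-\tfrac{b_k}{\gamma}\big)+\tfrac{1}{2}L_{F_\delta}(\tfrac{d_2}{\delta})\eta_k^2$, and invoking the step-size restriction $\gamma\eta_k\le \tfrac{1}{2L_{F_\delta}(\frac{d_2}{\delta})c_l}$ to absorb the $\eta_k^2$ smoothness term into a fraction of the descent, leaves a net coefficient of at most $-\tfrac{\eta_k}{2\gamma c_l}$, which is the claim.

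I expect the main obstacle to be the bookkeeping around the adaptive denominator $b_k$: one must use the bounds induced by the projection $\mathcal{P}_{[1/c_u,1/c_l]}$ both to make $\langle w_k,\tilde{x}_{k+1}-x_k\rangle$ sufficiently negative and to pick the Young parameter so that the gradient-error coefficient lands precisely on $\gamma c_l$, and the smoothness-term absorption is the sole place where the hypothesis $\gamma\eta_k\le \tfrac{1}{2L_{F_\delta}(\frac{d_2}{\delta})c_l}$ is consumed. Finally, I would emphasize that the residual $\|\nabla F_\delta(x_k)-w_k\|^2$ is exactly the quantity further controlled in Lemma~\ref{lemma:boud_of_F_w}, so this descent estimate is engineered to feed directly into the subsequent momentum-error recursion.
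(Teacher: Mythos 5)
Your proposal is correct and follows essentially the same route as the paper's own proof: the same smoothness expansion along $x_{k+1}-x_k=\eta_k(\tilde{x}_{k+1}-x_k)$, the same splitting of $\nabla F_{\delta}(x_k)$ into $w_k$ plus the estimation error, the same Young's inequality producing $\gamma c_l\|\nabla F_{\delta}(x_k)-w_k\|^2+\frac{1}{4\gamma c_l}\|\tilde{x}_{k+1}-x_k\|^2$, and the same use of $\gamma\eta_k\leq\frac{1}{2L_{F_{\delta}}(d_2/\delta)c_l}$ to absorb the $\frac{\eta_k^2}{2}L_{F_{\delta}}$ term and land on the coefficient $-\frac{\eta_k}{2\gamma c_l}$. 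The only cosmetic difference is that the paper obtains $\langle w_k,\tilde{x}_{k+1}-x_k\rangle\leq-\frac{1}{\gamma c_l}\|\tilde{x}_{k+1}-x_k\|^2$ from the optimality condition of the (unconstrained) argmin characterization of $\tilde{x}_{k+1}$, whereas you use the equivalent closed form $\tilde{x}_{k+1}-x_k=-\frac{\gamma}{b_k}w_k$; in both cases the clamp $\mathcal{P}_{[1/c_u,1/c_l]}$ is invoked in exactly the same way, so your argument inherits the paper's reading of those bounds rather than introducing any new gap.
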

\begin{proof}
    Due to the smoothness of $F_{\delta}$ and let $\tilde{x}_{k+1}=x_k-\frac{\gamma}{\mathcal{P}_{[1/c_u,1/c_l]}(\sqrt{m_{2,k}}+G_0)}w_{k}$, we have 
    \begin{align}
		&F_{\delta}(x_{k+1})\nonumber\\
        \leq&F_{\delta}(x_k)+ \nabla F_{\delta}(x_k)^{\top}(x_{k+1}-x_k)+\frac{1}{2}L_{F_{\delta}}(\frac{d_2}{\delta})\|x_{k+1}-x_k\|^2\nonumber\\
        =&F_{\delta}(x_k)+ \eta_k\nabla F_{\delta}(x_k)^{\top}(\tilde{x}_{k+1}-x_k)+\frac{1}{2}L_{F_{\delta}}(\frac{d_2}{\delta})\|\eta_k(\tilde{x}_{k+1}-x_k)\|^2\nonumber\\
        =&F_{\delta}(x_k)+ \eta_k \langle w_{k},\tilde{x}_{k+1}-x_k\rangle +\eta_k\langle\nabla F_{\delta}(x_k)-w_{k},\tilde{x}_{k+1}-x_k\rangle+\frac{\eta_k^2}{2}L_{F_{\delta}}(\frac{d_2}{\delta})\|\tilde{x}_{k+1}-x_k\|^2\
    \end{align}
    In our algorithm, we have $\tilde{x}_{k+1}=x_k-\frac{\gamma}{\mathcal{P}_{[1/c_u,1/c_l]}(\sqrt{m_{2,k}}+G_0)}w_{k}=\mathop{\arg\min}_{x\in\mathbb{R}^{d_1}}\frac{1}{2}\|x-x_k+\frac{\gamma}{\mathcal{P}_{[1/c_u,1/c_l]}(\sqrt{m_{2,k}}+G_0)}w_{k}\|^2$. We have the following optimal condition,
    \begin{align}
        \langle\tilde{x}_{k+1}-x_k+\frac{\gamma}{\mathcal{P}_{[1/c_u,1/c_l]}(\sqrt{m_{2,k}}+G_0)}w_{k}, x-\tilde{x}_{k+1}\rangle\geq 0, \ x\in\mathbb{R}^{d_1}
    \end{align}
    Set $x=x_k$, we can obtain 
    \begin{align}
       \gamma c_l\langle w_{k}, \tilde{x}_{k+1}-x_k\rangle\leq-\|\tilde{x}_{k+1}-x_k\|^2
    \end{align}
    Thus, we have
    \begin{align}
        \langle w_{k}, \tilde{x}_{k+1}-x_k\rangle\leq-\frac{1}{\gamma c_l}\|\tilde{x}_{k+1}-x_k\|^2
    \end{align}
    In addition, we can obtain 
    \begin{align}
        &\langle\nabla F_{\delta}(x_{k})-w_{k}, \tilde{x}_{k+1}-x_k\rangle\nonumber\\
        \leq&\|\nabla F_{\delta}(x_{k})-w_{k}\|_2\|\tilde{x}_{k+1}-x_k\|_2\nonumber\\
        \leq&\gamma c_l\|\nabla F_{\delta}(x_{k})-w_{k}\|^2+\frac{1}{4\gamma c_l}\|\tilde{x}_{k+1}-x_k\|^2
    \end{align}
    Then, setting $\gamma\leq\frac{1}{2L_{F_{\delta}}(\frac{d_2}{\delta}) c_l\eta_k}$, we can derive  
    \begin{align}
        &F_{\delta}(x_{k+1})\nonumber\\
        \leq&F_{\delta}(x_{k})+\eta_k\gamma c_l\|\nabla F_{\delta}(x_{k})-w_{k}\|^2+\frac{\eta_k}{4\gamma c_l}\|\tilde{x}_{k+1}-x_k\|^2-\frac{\eta_k}{\gamma c_l}\|\tilde{x}_{k+1}-x_k\|^2+\frac{\eta_k^2}{2}L_{F_{\delta}}(\frac{d_2}{\delta})\|\tilde{x}_{k+1}-x_k\|^2\nonumber\\
        \leq&F_{\delta}(x_{k})+\eta_k\gamma c_l\|\nabla F_{\delta}(x_{k})-w_{k}\|^2-\frac{\eta_k}{2\gamma c_l}\|\tilde{x}_{k+1}-x_k\|^2
    \end{align}
\end{proof}

\begin{lemma}\label{lemma:convergence_of_y} (\textbf{Error between the updates of $y$ and the optimal solution $y^*$})
	Under Assumptions \ref{assump:upper_level}, \ref{assump:lower_level}, let $\tilde{y}_{k+1}=\mathcal{P}_{\mathcal{Y}}(y_{k}-\frac{\tau}{\mathcal{P}_{[1/c_u,1/c_l]}(\sqrt{m_{1,k}}+G_0)}  v_{k})$ and $\eta_k\leq1$, $\tau\leq\frac{1}{6L_gc_u}$, we have 
	\begin{align}
		&\|y_{k+1}-y^*(x_{k+1})\|^2\nonumber\\
        \leq&(1-\frac{\eta_k\tau\mu_g c_u}{4})\|y^*(x_k)-y_k\|^2+\frac{25\eta_k\tau c_u}{6\mu_g}\|\nabla_yg(x_k,y_k)-v_{k}\|^2-\frac{3\eta_k}{4}\|\tilde{y}_{k+1}-y_k\|^2+\frac{25L_y^2\eta_k}{6\tau\mu_g c_u}\|x_{k}-\tilde{x}_{k+1}\|^2
	\end{align}
\end{lemma}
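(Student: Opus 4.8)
The plan is to prove this one-step descent in three stages: first control the distance of the projected inexact-gradient step $\tilde y_{k+1}$ to the \emph{current} optimum $y^*(x_k)$, then pass from $\tilde y_{k+1}$ to the averaged iterate $y_{k+1}=(1-\eta_k)y_k+\eta_k\tilde y_{k+1}$, and finally replace $y^*(x_k)$ by $y^*(x_{k+1})$ using the Lipschitz continuity of $y^*$ from Lemma \ref{lemma:Lip_y_star}. Throughout I abbreviate $y^*=y^*(x_k)$ and write $\tau_k=\tau/\mathcal{P}_{[1/c_u,1/c_l]}(\sqrt{m_{1,k}}+G_0)$, so that $\tau c_l\le\tau_k\le\tau c_u$; the lower bound governs the progress term and the upper bound the error terms.

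\emph{The single projected step.} Since $\tilde y_{k+1}=\mathcal P_{\mathcal Y}(y_k-\tau_k v_k)$, the variational inequality of the projection with test point $y^*$, combined with the three-point identity $\langle a-b,c-a\rangle=\tfrac12(\|b-c\|^2-\|a-b\|^2-\|a-c\|^2)$, yields
\[
\|\tilde y_{k+1}-y^*\|^2\le\|y_k-y^*\|^2-\|\tilde y_{k+1}-y_k\|^2-2\tau_k\langle v_k,\tilde y_{k+1}-y^*\rangle.
\]
I then write $v_k=\nabla_y g(x_k,y_k)+(v_k-\nabla_y g(x_k,y_k))$ and $\tilde y_{k+1}-y^*=(\tilde y_{k+1}-y_k)+(y_k-y^*)$. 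For the exact-gradient contribution I apply the $L_g$-smoothness descent lemma to $\langle\nabla_y g(x_k,y_k),\tilde y_{k+1}-y_k\rangle$ and $\mu_g$-strong convexity to $\langle\nabla_y g(x_k,y_k),y_k-y^*\rangle$; the two function-value gaps telescope into $-2\tau_k\bigl(g(x_k,\tilde y_{k+1})-g(x_k,y^*)\bigr)\le0$, leaving a $+\tau_k L_g\|\tilde y_{k+1}-y_k\|^2$ and a $-\tau_k\mu_g\|y_k-y^*\|^2$. The momentum-error contribution is split by Young's inequality against $\|\tilde y_{k+1}-y_k\|^2$ and $\|y_k-y^*\|^2$ with tunable weights. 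Collecting terms gives a bound of the form
\[
\|\tilde y_{k+1}-y^*\|^2\le(1-c)\|y_k-y^*\|^2+A\,\|\nabla_y g(x_k,y_k)-v_k\|^2-B\,\|\tilde y_{k+1}-y_k\|^2,
\]
where $c=\Theta(\tau_k\mu_g)$, $A=\Theta(\tau_k/\mu_g)$ (the $1/\mu_g$ arising from choosing the Young weight on the $\|y_k-y^*\|^2$ cross-term as $\tau_k\mu_g/2$), and $B=1-\tau_k L_g-\Theta(1)$; the hypothesis $\tau\le 1/(6L_gc_u)$ forces $\tau_kL_g\le1/6$, keeping $B\ge 3/4$ once the weights are fixed.

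\emph{Averaging and Lipschitz transfer.} Using $\eta_k\le1$ and the identity $\|(1-\eta_k)a+\eta_k b\|^2=(1-\eta_k)\|a\|^2+\eta_k\|b\|^2-\eta_k(1-\eta_k)\|a-b\|^2$ with $a=y_k-y^*$, $b=\tilde y_{k+1}-y^*$ (so $a-b=y_k-\tilde y_{k+1}$), the previous display becomes a contraction in $\|y_{k+1}-y^*(x_k)\|^2$ with the negative $\|\tilde y_{k+1}-y_k\|^2$ term preserved. Finally I set $y^*(x_{k+1})=y^*+\bigl(y^*(x_{k+1})-y^*(x_k)\bigr)$, bound $\|y^*(x_{k+1})-y^*(x_k)\|\le L_y\|x_{k+1}-x_k\|=L_y\eta_k\|\tilde x_{k+1}-x_k\|$ via Lemma \ref{lemma:Lip_y_star} and the $x$-update $x_{k+1}-x_k=\eta_k(\tilde x_{k+1}-x_k)$, and apply $\|u+w\|^2\le(1+\theta)\|u\|^2+(1+\theta^{-1})\|w\|^2$ with $\theta=\Theta(\eta_k c)$. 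The factor $1+\theta$ absorbs the displacement into the contraction while converting $\|y^*(x_{k+1})-y^*(x_k)\|^2$ into the drift term $\Theta(L_y^2\eta_k/(\tau\mu_g c_u))\|x_k-\tilde x_{k+1}\|^2$, producing exactly the claimed inequality.

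The main obstacle is the constant bookkeeping in the single-step estimate: the Young weights in the error split and the splitting parameter $\theta$ must be chosen \emph{simultaneously} so that, after multiplication by $1+\theta$, the coefficient of $\|y_k-y^*\|^2$ is at most $1-\eta_k\tau\mu_g c_u/4$, the coefficient of $\|\tilde y_{k+1}-y_k\|^2$ stays below $-3\eta_k/4$, and the momentum-error and drift coefficients land at $25\eta_k\tau c_u/(6\mu_g)$ and $25L_y^2\eta_k/(6\tau\mu_g c_u)$ respectively. Retaining the negative $\|\tilde y_{k+1}-y_k\|^2$ term (which a plain nonexpansiveness contraction argument would discard) is precisely what forces the three-point-identity route together with the descent lemma, and making all four coefficients close consistently under $\tau\le1/(6L_gc_u)$ and $\eta_k\le1$ is the delicate part; the remaining inequalities are routine applications of Cauchy--Schwarz and Young's inequality.
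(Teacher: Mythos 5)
Your plan assembles exactly the ingredients of the paper's own proof, merely repackaged: the projection variational inequality, the descent lemma and strong convexity with the function-value gaps telescoping against $g(x_k,\tilde y_{k+1})\ge g(x_k,y^*(x_k))$, Young splits of the momentum error against $\|y_k-y^*(x_k)\|^2$ and $\|\tilde y_{k+1}-y_k\|^2$, the averaging step for $y_{k+1}=(1-\eta_k)y_k+\eta_k\tilde y_{k+1}$, and the final Lipschitz transfer with $\|y^*(x_{k+1})-y^*(x_k)\|\le L_y\eta_k\|\tilde x_{k+1}-x_k\|$. Your packaging (three-point identity to bound $\|\tilde y_{k+1}-y^*(x_k)\|^2$ first, then the exact convexity identity for the averaging) and the paper's packaging (a scalar inequality in function values, with the cross term $\langle\tilde y_{k+1}-y_k,y_k-y^*(x_k)\rangle$ recovered from the expansion of $\|y_{k+1}-y^*(x_k)\|^2$) are algebraically equivalent, so this is not a genuinely different route.

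There is, however, one concrete gap, and it sits exactly in the part you defer as ``delicate bookkeeping.'' In your stage 1 the strong-convexity progress enters multiplied by the adaptive step $\tau_k$, so, as you yourself state, it is governed by the lower bound $\tau_k\ge\tau c_l$: your route can only deliver a contraction $1-\Theta(\eta_k\tau\mu_g c_l)$, and then (choosing $\theta=\Theta(\eta_k\tau\mu_g c_l)$ so the transfer term is absorbed) a drift coefficient $\Theta\bigl(L_y^2\eta_k/(\tau\mu_g c_l)\bigr)$. Since $c_l\le c_u$, no choice of Young weights can promote these to the stated $1-\frac{\eta_k\tau\mu_g c_u}{4}$ and $\frac{25L_y^2\eta_k}{6\tau\mu_g c_u}$; the error coefficient $\frac{25\eta_k\tau c_u}{6\mu_g}$ and the $-\frac{3\eta_k}{4}$ term are reachable, but the contraction is not. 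The paper itself obtains the $c_u$ constants only by passing from the variational inequality $\tau_k\langle v_k,y-\tilde y_{k+1}\rangle\ge\langle\tilde y_{k+1}-y_k,\tilde y_{k+1}-y\rangle$ to the same inequality with $\tau c_u$ in place of $\tau_k$, which is valid only if the inner product on the right (equivalently $\langle v_k, y^*(x_k)-\tilde y_{k+1}\rangle$) has a known sign --- something never verified; effectively the whole estimate is then rescaled by $2\eta_k\tau c_u$, which is where the $c_u$ in the lemma comes from. So, carried out rigorously, your argument proves the lemma with $c_l$ in place of $c_u$ in the contraction and drift terms (a correct but strictly weaker statement, and the one the downstream Lyapunov analysis would then have to use); matching the statement verbatim requires either importing the paper's unjustified substitution or assuming $c_l=c_u$, i.e.\ no adaptivity.
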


\begin{proof}
		Define $\tilde{y}_{k+1}=\mathcal{P}_{\mathcal{Y}}(y_{k}-\frac{\tau}{\mathcal{P}_{[1/c_u,1/c_l]}(\sqrt{m_{1,k}}+G_0)}  v_{k})$. We have $y_{k+1}=(1-\eta_k)y_{k}+\eta_k\tilde{y}_{k+1}$.
	
	According to the strong convexity of $g$, we have 
	\begin{align}
		&g(x_k,y)\nonumber\\
  \geq& g(x_k,y_k)+\langle \nabla_y g(x_k,y_k),y-y_k \rangle+\frac{\mu_g}{2}\|y-y_k\|^2\nonumber\\
		=& g(x_k,y_k)+\langle v_{k}, y-\tilde{y}_{k+1}\rangle+\langle \nabla_yg(x_k,y_k)-v_{k}, y-\tilde{y}_{k+1} \rangle + \langle \nabla_yg(x_k,y_k),\tilde{y}_{k+1}-y_k \rangle + \frac{\mu_g}{2}\|y-y_k\|^2
	\end{align}
	According to the smoothness of $g$, we have
	\begin{align}
		g(x_k, \tilde{y}_{k+1})\leq g(x_k,y_{k})+\langle \nabla_yg(x_k,y_k),\tilde{y}_{k+1}-y_k \rangle +\frac{L_g}{2}\|\tilde{y}_{k+1}-y_k\|^2
	\end{align}
	Then, combining the above inequalities, we have 
	\begin{align}
		g(x_k, y)\geq g(x_k,\tilde{y}_{k+1})-\frac{L_g}{2}\|\tilde{y}_{k+1}-y_k\|^2+\langle v_{k}, y-\tilde{y}_{k+1} \rangle +\langle \nabla_y g(x_k,y_k)-v_{k},y-\tilde{y}_{k+1} \rangle+\frac{\mu_g}{2}\|y-y_k\|^2
	\end{align}
	
	In our Algorithm \ref{alg:DMSCBO}, we have 
	\begin{align}
		\tilde{y}_{k+1}=&\mathcal{P}_{\mathcal{Y}}\left(y_k-\frac{\tau}{\mathcal{P}_{[1/c_u,1/c_l]}(\sqrt{m_{1,k}}+G_0)} v_{k}\right)
		=\mathop{\arg\min}_{y\in\mathcal{Y}}\frac{1}{2}\left\|y-y_k+\frac{\tau}{\mathcal{P}_{[1/c_u,1/c_l]}(\sqrt{m_{1,k}}+G_0)} v_{k}\right\|^2.
	\end{align}
	Since $\mathcal{Y}$ is a convex set and the function $\frac{1}{2}\left\|y-y_k+\frac{\tau}{\mathcal{P}_{[1/c_u,1/c_l]}(\sqrt{\|v_{k}\|})} v_{k}\right\|^2$ is convex, we have 
	\begin{align}
		\left\langle \tilde{y}_{k+1}-y_k+\frac{\tau}{\mathcal{P}_{[1/c_u,1/c_l]}(\sqrt{m_{1,k}}+G_0)} v_{k}, y-\tilde{y}_{k+1} \right\rangle\geq0, \quad y\in\mathcal{Y}
	\end{align}
	Then we have 
	\begin{align}
	      \tau c_u\langle v_{k}, y-\tilde{y}_{k+1} \rangle \geq\langle \tilde{y}_{k+1}-y_k,\tilde{y}_{k+1}-y \rangle
	\end{align}
	
	Then we have 
	\begin{align}
		&g(x_k,y)\nonumber\\
		\geq&g(x_k,\tilde{y}_{k+1})-\frac{L_g}{2}\|\tilde{y}_{k+1}-              y_k\|^2+\frac{\mu_g}{2}\|y-y_k\|^2+\langle  \nabla_yg(x_k,y_k)-        v_{k}, y-\tilde{y}_{k+1}\rangle+\frac{1}{\tau c_u}\langle                \tilde{y}_{k+1}-y_k,\tilde{y}_{k+1}-y \rangle
	\end{align}
	Let $y=y^*(x_k)$. Since $g(x_k,y^*(x_k))\leq g(x_k,\tilde{y}_{k+1})$, we have
	\begin{align}
		&g(x_k,\tilde{y}_{k+1})\geq g(x_k,y^*(x_k))\nonumber\\
		\geq&g(x_k,\tilde{y}_{k+1})-\frac{L_g}{2}\|\tilde{y}_{k+1}-y_k\|^2+\frac{\mu_g}{2}\|y^*(x_k)-y_k\|^2+\langle  \nabla_yg(x_k,y_k)-v_{k}, y^*(x_k)-\tilde{y}_{k+1}\rangle\nonumber\\
		&+\frac{1}{\tau c_u}\| \tilde{y}_{k+1}-y_k \|^2+\frac{1}{\tau c_u}\langle \tilde{y}_{k+1}-y_k,y_k-y^*(x_k) \rangle
	\end{align}
	
	In addition, we have
	\begin{align}
		&\langle  \nabla_yg(x_k,y_k)-v_{k}, y^*(x_k)-\tilde{y}_{k+1}\rangle\nonumber\\
		=&\langle  \nabla_yg(x_k,y_k)-v_{k}, y^*(x_k)-y_{k}\rangle + \langle  \nabla_yg(x_k,y_k)-v_{k}, y_k-\tilde{y}_{k+1}\rangle \nonumber\\
		\geq &-\|\nabla_yg(x_k,y_k)-v_{k}\|\|y^*(x_k)-y_{k}\| -  \|\nabla_yg(x_k,y_k)-v_{k}\|\|y_k-\tilde{y}_{k+1}\|\nonumber\\
		\geq&-\frac{1}{\mu_g}\|\nabla_yg(x_k,y_k)-v_{k}\|^2-\frac{\mu_g}{4}\|y^*(x_k)-y_{k}\|^2-\frac{1}{\mu_g}\|\nabla_yg(x_k,y_k)-v_{k}\|^2-\frac{\mu_g}{4}\|y_k-\tilde{y}_{k+1}\|^2\nonumber\\
		\geq&-\frac{2}{\mu_g}\|\nabla_yg(x_k,y_k)-v_{k}\|^2-\frac{\mu_g}{4}\|y^*(x_k)-y_{k}\|^2-\frac{\mu_g}{4}\|y_k-\tilde{y}_{k+1}\|^2
	\end{align}
	The first inequality is due to $\langle a,b\rangle\geq -\|a\|\|b\|$ and the second inequality is due to the Young's inequality. We also have
	\begin{align}
		&\|y_{k+1}-y^*(x_k)\|^2\nonumber\\
		\leq&\|y_k+\eta_k(\tilde{y}_{k+1}-y_k)-y^*(x_k)\|^2\nonumber\\
		=&\|y_k-y^*(x_k)\|^2+\eta_k^2\|\tilde{y}_{k+1}-y_k\|^2+2\eta_k\langle\tilde{y}_{k+1}-y_k, y_k-y^*(x_k)\rangle
	\end{align}
	Therefore, we have 
	\begin{align}
		\langle\tilde{y}_{k+1}-y_k, y_k-y^*(x_k)\rangle\geq\frac{1}{2\eta_k}(\|y_{k+1}-y^*(x_k)\|^2-\|y_k-y^*(x_k)\|^2-\eta_k^2\|\tilde{y}_{k+1}-y_k\|^2)
	\end{align}
	Then, we have 
	\begin{align}
		0\geq&-\frac{L_g}{2}\|\tilde{y}_{k+1}-y_k\|^2+\frac{\mu_g}{2}\|y^*(x_k)-y_k\|^2+\frac{1}{\tau c_u}\| \tilde{y}_{k+1}-y_k \|^2\nonumber\\
		&-\frac{2}{\mu_g}\|\nabla_yg(x_k,y_k)-v_{k}\|^2-\frac{\mu_g}{4}\|y^*(x_k)-y_{k}\|^2-\frac{\mu_g}{4}\|y_k-\tilde{y}_{k+1}\|^2\nonumber\\
		&+\frac{1}{2\eta_k\tau c_u}(\|y_{k+1}-y^*(x_k)\|^2-\|y_k-y^*(x_k)\|^2-\eta_k^2\|\tilde{y}_{k+1}-y_k\|^2)
	\end{align}
	Hence we have 
	\begin{align}
		&\|y_{k+1}-y^*(x_k)\|^2\nonumber\\
  \leq&2\eta_k\tau c_u(\frac{L_g}{2}-\frac{1}{\tau c_u}+\frac{\mu_g}{4}+\frac{\eta_k}{2\tau c_u})\|\tilde{y}_{k+1}-y_k\|^2+(1-\frac{\mu_g\eta_k\tau c_u}{2})\|y^*(x_k)-y_k\|^2+\frac{4\eta_k\tau c_u}{\mu_g}\|\nabla_yg(x_k,y_k)-v_{k}\|^2\nonumber\\
		\leq&(1-\frac{\mu_g\eta_k\tau c_u}{2})\|y^*(x_k)-y_k\|^2+\frac{4\eta_k\tau c_u}{\mu_g}\|\nabla_yg(x_k,y_k)-v_{k}\|^2-2\eta_k\tau c_u(\frac{1}{2\tau c_u}-\frac{3L_g}{4})\|\tilde{y}_{k+1}-y_k\|^2\nonumber\\
        \leq&(1-\frac{\mu_g\eta_k\tau c_u}{2})\|y^*(x_k)-y_k\|^2+\frac{4\eta_k\tau c_u}{\mu_g}\|\nabla_yg(x_k,y_k)-v_{k}\|^2-\frac{3\eta_k}{4}\|\tilde{y}_{k+1}-y_k\|^2
	\end{align}
	using $\eta_k\leq1$, $\mu_g\leq L_g$ and $\tau\leq\frac{1}{6L_gc_u}$.
	
	Then, we have 
	\begin{align}
		&\|y_{k+1}-y^*(x_{k+1})\|^2\nonumber\\
		=&\|y_{k+1}-y^*(x_{k})+y^*(x_{k})-y^*(x_{k+1})\|^2\nonumber\\
		\leq&(1+\frac{\eta_k\tau\mu_g c_u}{4})\|y_{k+1}-y^*(x_{k})\|^2+(1+\frac{4}{\eta_k\tau\mu_g c_u})\|y^*(x_{k})-y^*(x_{k+1})\|^2\nonumber\\
		\leq&(1+\frac{\eta_k\tau\mu_gc_u}{4})\|y_{k+1}-y^*(x_{k})\|^2+(1+\frac{4}{\eta_k\tau\mu_g c_u})L_y^2\|x_{k}-x_{k+1}\|^2\nonumber\\
		\leq&(1+\frac{\eta_k\tau\mu_g c_u}{4})(1-\frac{\mu_g\eta_k\tau c_u}{2})\|y^*(x_k)-y_k\|^2+(1+\frac{\eta_k\tau\mu_g c_u}{4})\frac{4\eta_k\tau c_u}{\mu_g}\|\nabla_yg(x_k,y_k)-v_{k}\|^2\nonumber\\
		&-(1+\frac{\eta_k\tau\mu_gc_u}{4})\frac{3\eta_k}{4}\|\tilde{y}_{k+1}-y_k\|^2+(1+\frac{4}{\eta_k\tau\mu_g c_u})L_y^2\|x_{k}-x_{k+1}\|^2
	\end{align}
	Since $\eta_k\leq1$, $\mu_g\leq L_g$ and $\tau\leq\frac{1}{6L_gc_u}$, we have $\tau\leq\frac{1}{6L_gc_u}\leq\frac{1}{6\mu_gc_u}$ and $\eta_k\leq1\leq\frac{1}{6\tau L_gc_u}$. Then, we can obtain
    \begin{align}
        (1+\frac{\eta_k\tau\mu_g c_u}{4})(1-\frac{\mu_g\eta_k\tau c_u}{2})=&1-\frac{\mu_g\eta_k\tau c_u}{2}+\frac{\eta_k\tau\mu_g c_u}{4}-\frac{\eta_k^2\tau^2\mu_g^2 c_u^2}{8}\leq 1-\frac{\eta_k\tau\mu_g c_u}{4}\\
        -(1+\frac{\eta_k\tau\mu_gc_u}{4})\frac{3\eta_k}{4}\leq& -\frac{3\eta_k}{4}\\
        (1+\frac{\eta_k\tau\mu_g c_u}{4})\frac{4\eta_k\tau c_u}{\mu_g}\leq&\frac{25\eta_k\tau c_u}{6\mu_g}\\
        (1+\frac{4}{\eta_k\tau\mu_g c_u})L_y^2\leq&\frac{25L_y^2}{6\eta_k\tau\mu_g c_u}
    \end{align}
    Finally, we can obtain
    \begin{align}
		&\|y_{k+1}-y^*(x_{k+1})\|^2\nonumber\\
		\leq&(1-\frac{\eta_k\tau\mu_g c_u}{4})\|y^*(x_k)-y_k\|^2+\frac{25\eta_k\tau c_u}{6\mu_g}\|\nabla_yg(x_k,y_k)-v_{k}\|^2\nonumber\\
		&-\frac{3\eta_k}{4}\|\tilde{y}_{k+1}-y_k\|^2+\frac{25L_y^2}{6\eta_k\tau\mu_g c_u}\|x_{k}-x_{k+1}\|^2\nonumber\\
        \leq&(1-\frac{\eta_k\tau\mu_g c_u}{4})\|y^*(x_k)-y_k\|^2+\frac{25\eta_k\tau c_u}{6\mu_g}\|\nabla_yg(x_k,y_k)-v_{k}\|^2\nonumber\\
		&-\frac{3\eta_k}{4}\|\tilde{y}_{k+1}-y_k\|^2+\frac{25L_y^2\eta_k}{6\tau\mu_g c_u}\|x_{k}-\tilde{x}_{k+1}\|^2
	\end{align}
\end{proof}


\begin{lemma}\label{lemma:bound_of_z_in_ASZOGD}
	(\textbf{Descent in the gradient estimation error.\cite{huang2021biadam}}) Under Assumptions \ref{assump:upper_level}, \ref{assump:lower_level},  and Lemma \ref{lemma:bounds_of_hypergrad}, if $\alpha\in(0,1)$ and $\beta\in(0,1)$, we have
	\begin{align}
        &\mathbb{E}[\|\nabla f_{\delta}(x_{k+1},y_{k+1})+R_{k+1}-w_{k+1}\|^2]\nonumber\\
        \leq&(1-\alpha)\mathbb{E}[\|\nabla f_{\delta}(x_{k},y_{k})+R_{k}-w_k\|^2]+\alpha^2\sigma_f(d_2)+\frac{3}{\alpha}(\|R_k\|^2+\|R_{k+1}\|^2)+\frac{3}{\alpha}L_0^2\eta_k^2(\|{x}_k-\tilde{x}_{k+1}\|^2+\|{y}_k-\tilde{y}_{k+1}\|^2)\\
        &\mathbb{E}[\|\nabla g(x_{k+1},y_{k+1})-v_{k+1}\|^2]\nonumber\\
        \leq&(1-\beta)\mathbb{E}[\|\nabla g(x_{k},y_{k})-v_k\|^2]+\frac{2L_g^2}{\beta}\eta_k^2(\|{x}_k-\tilde{x}_{k+1}\|^2+\|{y}_k-\tilde{y}_{k+1}\|^2)
    \end{align}
     where $L_0=\max(L_1(\frac{d_2}{\delta}),L_2(\frac{d_2}{\delta}))$.
\end{lemma}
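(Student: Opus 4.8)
The plan is to follow the standard momentum / variance-reduction descent argument (as in \cite{huang2021biadam}), adapted to the fact that our upper-level estimator $\bar{\nabla} f_{\delta}$ is \emph{biased}. First I would introduce the shorthand $b_k := \nabla f_{\delta}(x_k,y_k)+R_k$, where $R_k$ denotes the bias of the estimator, so that $b_k=\mathbb{E}[\bar{\nabla} f_{\delta}(x_k,y_k;\bar{\xi}_k)\mid\mathcal{F}_k]$ by the definition of $R_k$ and the bias bound in Lemma \ref{lemma:bounds_of_hypergrad}. Using the momentum update $w_{k+1}=(1-\alpha)w_k+\alpha\,\bar{\nabla} f_{\delta}(x_{k+1},y_{k+1};\bar{\xi}_{k+1})$, I would decompose
\begin{align}
b_{k+1}-w_{k+1}=(1-\alpha)(b_k-w_k)+(1-\alpha)(b_{k+1}-b_k)+\alpha\big(b_{k+1}-\bar{\nabla} f_{\delta}(x_{k+1},y_{k+1};\bar{\xi}_{k+1})\big).\nonumber
\end{align}

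The key structural observation is that, conditioning on the $\sigma$-algebra $\mathcal{F}_k$ generated by everything through iteration $k$, the iterates $x_{k+1},y_{k+1}$ are $\mathcal{F}_k$-measurable (they are built from $w_k,v_k$ and the adaptive step sizes), whereas the fresh sample $\bar{\xi}_{k+1}$ is independent of $\mathcal{F}_k$. Hence the last (noise) term has zero conditional mean and is uncorrelated with the first two (deterministic) terms. Expanding the square and taking conditional expectation therefore kills the cross terms, leaving the squared deterministic part plus $\alpha^2\,\mathbb{E}[\|b_{k+1}-\bar{\nabla} f_{\delta}(x_{k+1},y_{k+1};\bar{\xi}_{k+1})\|^2\mid\mathcal{F}_k]$, which is at most $\alpha^2\sigma_f(d_2)$ by the variance bound of Lemma \ref{lemma:bounds_of_hypergrad}; this produces the $\alpha^2\sigma_f(d_2)$ term.

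Next I would control the deterministic part $\|(1-\alpha)(b_k-w_k)+(1-\alpha)(b_{k+1}-b_k)\|^2$ with Young's inequality $\|u+v\|^2\le(1+\lambda)\|u\|^2+(1+\lambda^{-1})\|v\|^2$, choosing $\lambda=\alpha/(1-\alpha)$ so that the first coefficient collapses to exactly $1-\alpha$ and the second is $(1-\alpha)^2/\alpha\le 1/\alpha$. It then remains to bound $\|b_{k+1}-b_k\|^2$. Writing $b_{k+1}-b_k=(\nabla f_{\delta}(x_{k+1},y_{k+1})-\nabla f_{\delta}(x_k,y_k))+R_{k+1}-R_k$ and splitting into three pieces gives the factor $3$; the bias pieces supply $\tfrac{3}{\alpha}(\|R_k\|^2+\|R_{k+1}\|^2)$, while the gradient-difference piece is handled by the Lipschitz estimates of Lemma \ref{lemma:lip_f_y} together with the step relations $x_{k+1}-x_k=\eta_k(\tilde{x}_{k+1}-x_k)$ and $y_{k+1}-y_k=\eta_k(\tilde{y}_{k+1}-y_k)$, yielding the $\tfrac{3}{\alpha}L_0^2\eta_k^2(\|x_k-\tilde{x}_{k+1}\|^2+\|y_k-\tilde{y}_{k+1}\|^2)$ term with $L_0=\max(L_1(\tfrac{d_2}{\delta}),L_2(\tfrac{d_2}{\delta}))$. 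Taking total expectation finishes the first inequality.

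The second inequality follows the same template but is strictly simpler, because the lower-level estimator $\nabla_y g(x_{k+1},y_{k+1})$ is exact: there is neither a bias $R$ nor a variance term. One decomposes $\nabla g(x_{k+1},y_{k+1})-v_{k+1}=(1-\beta)\big[(\nabla g(x_k,y_k)-v_k)+(\nabla g(x_{k+1},y_{k+1})-\nabla g(x_k,y_k))\big]$, applies the same Young step with parameter $\beta/(1-\beta)$, and bounds the drift $\|\nabla g(x_{k+1},y_{k+1})-\nabla g(x_k,y_k)\|^2$ via the $L_g$-smoothness of $g$ and the step relations, giving the $\tfrac{2L_g^2}{\beta}\eta_k^2(\cdots)$ term. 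The main point requiring care is the measurability/conditioning argument that makes the noise cross-term vanish while keeping $b_{k+1}$ (hence the drift term) inside the conditioning; a secondary nuisance is faithfully carrying the bias $R_k$ through the recursion, since working with the true hypergradient rather than the estimator's mean $b_k$ would break the unbiasedness used above.
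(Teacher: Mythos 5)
Your proposal is essentially correct, but note first that the paper itself contains no proof of this lemma: the citation in the lemma's title is the only justification offered, and the appendix moves directly from the statement to the proof of Theorem \ref{theorem:convergence_rate}, so there is no in-paper argument to compare against. Your reconstruction is the standard momentum-error recursion from \cite{huang2021biadam}, and the structural points are all right: the decomposition $b_{k+1}-w_{k+1}=(1-\alpha)(b_k-w_k)+(1-\alpha)(b_{k+1}-b_k)+\alpha\bigl(b_{k+1}-\bar{\nabla} f_{\delta}(x_{k+1},y_{k+1};\bar{\xi}_{k+1})\bigr)$; the observation that $x_{k+1},y_{k+1}$ are $\mathcal{F}_k$-measurable (Algorithm \ref{alg:DMSCBO} updates them in steps 2--3, before drawing $\bar{\xi}_{k+1}$ in step 4), so the noise term is conditionally centered, the cross terms vanish, and the conditional variance is controlled by $\sigma_f(d_2)$ from Lemma \ref{lemma:bounds_of_hypergrad}; the Young step with $\lambda=\alpha/(1-\alpha)$; and carrying the bias $R_k$ inside the recursion (working with $b_k=\nabla f_{\delta}(x_k,y_k)+R_k$ rather than $\nabla f_{\delta}(x_k,y_k)$), which is exactly what makes the conditional-unbiasedness step legitimate. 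The second inequality is likewise handled correctly.

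One quantitative caveat: the coefficient $\frac{3}{\alpha}L_0^2\eta_k^2$ does not quite follow from the route you describe. Lemma \ref{lemma:lip_f_y} gives Lipschitz continuity of $\nabla f_{\delta}$ separately in $y$ (constant $L_1(\frac{d_2}{\delta})$) and in $x$ (constant $L_2(\frac{d_2}{\delta})$); chaining these by the triangle inequality gives $\|\nabla f_{\delta}(x_{k+1},y_{k+1})-\nabla f_{\delta}(x_k,y_k)\|^2\le 2L_0^2\eta_k^2(\|\tilde{x}_{k+1}-x_k\|^2+\|\tilde{y}_{k+1}-y_k\|^2)$, so your three-way split of $b_{k+1}-b_k$ yields $\frac{6}{\alpha}L_0^2\eta_k^2(\cdots)$, not $\frac{3}{\alpha}$. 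The stated constant requires the joint estimate $\|\nabla f_{\delta}(x_1,y_1)-\nabla f_{\delta}(x_2,y_2)\|^2\le L_0^2(\|x_1-x_2\|^2+\|y_1-y_2\|^2)$, which is what the cited reference assumes but which the paper's separate estimates deliver only up to this factor of $2$. (By contrast, your $\frac{2L_g^2}{\beta}$ coefficient in the second inequality is exactly what the separate-variable route produces, which is why that constant matches.) The discrepancy is harmless for the paper's purposes: it only rescales constants that Theorem \ref{theorem:convergence_rate} absorbs into the admissible ranges of $\gamma$, $\tau$, $c_1$, $c_2$, but as written your sketch claims a constant it does not actually establish.
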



\subsection{Proof of the Convergence Rate in Theorem \ref{theorem:convergence_rate}}\label{section:proof_of_convergence_rate}
Here we first give a detailed version of Theorem \ref{theorem:convergence_rate} and then present the proof.
\begin{theorem}
Under Assumptions \ref{assump:upper_level}, \ref{assump:lower_level} and Lemma \ref{lemma:bound_of_nablaP},
with $\frac{1}{\mu_g}(1-\frac{1}{4(2\pi)^{1/4}\sqrt{d_2}L_p})\leq\eta<\frac{1}{\mu_g}$, $Q=\frac{1}{\mu_g\eta}\ln\frac{C_{gxy}C_{fy}K}{\mu_g}$, $0\leq a\leq 2$, $0<\gamma\leq\min\left\{\frac{1}{L_0^{2-a}},\frac{1}{2L_{F_{\delta}}(\frac{d_2}{\delta}) c_l\eta_k},\frac{1}{4c_l\left(\frac{125L_0^aL_y^2 c_l}{3\tau^2\mu_g^2 c_u^2}+(\frac{2}{3}L_0^2+\frac{6\mu_g^2L_g^2}{125L_0^2}) c_l\right)},\frac{2m^{1/2}}{9t}\right\}$, $0<\tau\leq\min\left\{\frac{1}{6L_gc_u},\frac{15L_0^a}{2\mu_gc_u\left(\frac{2}{3}L_0^2+\frac{6\mu_g^2L_g^2}{125L_0^2}\right)}\right\}$, $m\geq \max\{t^2,c_1^2t^2,c_2^2t^2\}$, $\alpha=c_1\eta_k$, $\beta=c_2\eta_k$, $\frac{9}{2}\gamma\leq c_1\leq\frac{m^{1/2}}{t}$, $\frac{125L_0^2}{3\mu_g^2}\leq c_2\leq\frac{m^{1/2}}{t}$, $L_0=\max(L_1(\frac{d_2}{\delta}),L_2(\frac{d_2}{\delta}))>1$, $\Phi_1
        =\mathbb{E}[F_{\delta}(x_{1})+\frac{10L_0^a c_l}{\tau\mu_g c_u}\|y_{1}-y^*(x_{1})\|^2 + c_l(\|w_{1}-\bar{\nabla} f_{\delta}(x_{1},y_{1})-R_{1}\|^2+\|\nabla_y g(x_{1},y_{1})-v_{1}\|^2)]$, and $\eta_k=\frac{t}{(m+k)^{1/2}}$, $t>0$, we have
\begin{align}
        &\frac{1}{K}\sum_{k=1}^K\mathbb{E}[\frac{1}{2}\|\nabla F_{\delta}(x_k)\|]
        \leq\frac{2m^{1/4}\sqrt{G}}{\sqrt{Kt}}+\frac{2\sqrt{G}}{(Kt)^{1/4}}.
    \end{align}
    where $G=\frac{\Phi_1-\Phi^*}{\gamma c_l}+\frac{17t}{4K^2}(m+K)^{1/2}+\frac{4}{3tK^2}(m+K)^{3/2}+(m\sigma_f(d_2))t^2\ln (m+K)$.
\end{theorem}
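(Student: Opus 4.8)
The plan is to run a potential-function (Lyapunov) argument built from the combined quantity $\Phi_k$ already displayed in the statement, namely the sum of the smoothed objective $F_\delta(x_k)$, the weighted lower-level tracking error $\tfrac{10L_0^a c_l}{\tau\mu_g c_u}\|y_k-y^*(x_k)\|^2$, and the two weighted gradient-estimation errors $c_l\|w_k-\bar\nabla f_\delta(x_k,y_k)-R_k\|^2$ and $c_l\|\nabla_y g(x_k,y_k)-v_k\|^2$, where $R_k$ is the deterministic bias $\nabla f_\delta-\mathbb{E}[\bar\nabla f_\delta]$ controlled by Lemma \ref{lemma:bounds_of_hypergrad}. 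First I would record the adaptive step-size sandwich implied by the projection $\mathcal{P}_{[1/c_u,1/c_l]}$: since the preconditioner lies in $[1/c_u,1/c_l]$, the displacements satisfy $\gamma c_l\|w_k\|\le\|\tilde x_{k+1}-x_k\|\le\gamma c_u\|w_k\|$ and the analogous bound for $y$, which is precisely what lets the Adam-type denominator be absorbed into the constants $c_l,c_u$ everywhere.

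Next I would assemble a one-step inequality by adding the four building blocks already proved: the descent estimate of Lemma \ref{lemma:reduction_F_delta}, the contraction of Lemma \ref{lemma:convergence_of_y} weighted by $\tfrac{10L_0^a c_l}{\tau\mu_g c_u}$, and the two momentum-error recursions of Lemma \ref{lemma:bound_of_z_in_ASZOGD} each weighted by $c_l$. The key is that every coupling term that is \emph{positive} in one block is dominated by a \emph{negative} term of matching order in $\eta_k$ in another: the $+\eta_k\gamma c_l\|\nabla F_\delta(x_k)-w_k\|^2$ from the descent step is split through Lemma \ref{lemma:boud_of_F_w} into a $\|y_k-y^*(x_k)\|^2$ piece (absorbed by the $-\tfrac{\eta_k\tau\mu_g c_u}{4}$ contraction of the weighted $y$-error) and a $\|\nabla f_\delta(x_k,y_k)-w_k\|^2$ piece (absorbed by the $-\alpha=-c_1\eta_k$ contraction of the $w$-error); the positive displacement terms $\tfrac{1}{\alpha}L_0^2\eta_k^2\|x_k-\tilde x_{k+1}\|^2$ and $\tfrac{2L_g^2}{\beta}\eta_k^2(\cdots)$ from the momentum recursions are dominated by $-\tfrac{\eta_k}{2\gamma c_l}\|\tilde x_{k+1}-x_k\|^2$ and $-\tfrac{3\eta_k}{4}\|\tilde y_{k+1}-y_k\|^2$ once $\gamma,\tau,c_1,c_2$ obey the stated smallness ranges. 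With these exact weights the combination collapses to
\[
\Phi_{k+1}\le \Phi_k-c_0\,\gamma c_l\,\eta_k\,\|\nabla F_\delta(x_k)\|^2+c_l c_1^2\eta_k^2\sigma_f(d_2)+(\text{bias terms in }R_k),
\]
where the gradient norm reappears because the surviving negative terms in $\|\tilde x_{k+1}-x_k\|^2$, $\|y_k-y^*(x_k)\|^2$ and $\|\nabla f_\delta-w_k\|^2$ jointly lower-bound $\eta_k\|\nabla F_\delta(x_k)\|^2$ via the step-size sandwich together with Lemma \ref{lemma:boud_of_F_w}.

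Finally I would telescope $\sum_{k=1}^K(\Phi_{k+1}-\Phi_k)=\Phi_{K+1}-\Phi_1\ge \Phi^*-\Phi_1$ and rearrange to bound $\sum_k\eta_k\|\nabla F_\delta(x_k)\|^2$ by $(\Phi_1-\Phi^*)/(\gamma c_l)$ plus the accumulated variance and bias. The bias is where $Q$ enters: with $Q=\tfrac{1}{\mu_g\eta}\ln\tfrac{C_{gxy}C_{fy}K}{\mu_g}$, Lemma \ref{lemma:bounds_of_hypergrad} forces $\|R_k\|\le 1/K$, so $\sum_k\eta_k\|R_k\|^2=\mathcal{O}(K^{-2}(m+K)^{1/2})$, matching the $\tfrac{17t}{4K^2}(m+K)^{1/2}$ and $\tfrac{4}{3tK^2}(m+K)^{3/2}$ contributions to $G$, while $\sum_k\eta_k^2=\mathcal{O}(t^2\ln(m+K))$ with $c_1^2t^2\le m$ yields the $m\sigma_f(d_2)t^2\ln(m+K)$ variance term. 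Using $\eta_k=t/(m+k)^{1/2}\ge t/(m+K)^{1/2}$ converts the weighted sum into $\tfrac1K\sum_k\mathbb{E}[\|\nabla F_\delta(x_k)\|^2]\le \tfrac{(m+K)^{1/2}}{tK}\,\mathcal{O}(G)$, and a final Cauchy--Schwarz step $\tfrac1K\sum_k\|\nabla F_\delta(x_k)\|\le(\tfrac1K\sum_k\|\nabla F_\delta(x_k)\|^2)^{1/2}$ with $(m+K)^{1/2}\le m^{1/2}+K^{1/2}$ and $\sqrt{a+b}\le\sqrt a+\sqrt b$ produces the two-term rate. Since $x_r$ is sampled uniformly and $\nabla F_\delta(x_r)\in\bar\partial_\delta F(x_r)$ by Proposition \ref{prop2}, this controls $\min\{\|h\|:h\in\bar\partial_\delta F(x_r)\}$.

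The hard part will be the coefficient accounting in the one-step combination: the constraints on $\gamma$, $\tau$, $c_1$, $c_2$, $m$ and the Lyapunov weights must be chosen \emph{simultaneously} so that every positive displacement, tracking and estimation-error term is swallowed by a negative term of the same order in $\eta_k$. This is delicate because the smoothing Lipschitz constant $L_0=\max(L_1(d_2/\delta),L_2(d_2/\delta))$ scales like $d_2/\delta$ and enters the weight $L_0^a$; tracking its propagation is exactly what forces the $(d_2/\delta)^{a/2}$ factor in $G$ and motivates taking $a=0$ to recover the $\tilde{\mathcal{O}}(d_2^2\epsilon^{-4})$ iteration count. A secondary subtlety is that the recursions hold only in expectation, so the telescoping must be performed on $\mathbb{E}[\Phi_k]$, keeping the deterministic bias $R_k$ separate from the zero-mean fluctuation captured by $\sigma_f(d_2)$.
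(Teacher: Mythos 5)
Your proposal matches the paper's proof essentially step for step: the same Lyapunov function $\Phi_k$, the same four one-step lemmas (descent on $F_{\delta}$, the $y$-tracking contraction, and the two momentum-error recursions) combined with the same coefficient-absorption constraints on $\gamma,\tau,c_1,c_2$, the same treatment of the bias $R_k$ via the choice of $Q$ (giving $\|R_k\|\le 1/K$) and of the variance via $\sum_k \eta_k^2 = \mathcal{O}(t^2\ln(m+K))$, and the same telescoping-plus-Jensen conversion to the two-term rate. The only cosmetic difference is ordering: you collapse the surviving negative terms into $-c_0\gamma c_l\eta_k\|\nabla F_{\delta}(x_k)\|^2$ before telescoping, whereas the paper carries the four error quantities $\|y_k-y^*(x_k)\|$, $\|\nabla f_{\delta}(x_k,y_k)-w_k-R_k\|$, $\|R_k\|$, and $\frac{1}{\gamma c_l}\|\tilde{x}_{k+1}-x_k\|$ through the averaging and recovers $\|\nabla F_{\delta}(x_k)\|$ only at the end via a gradient-mapping triangle inequality; both orderings use the same ingredients and yield the same bound.
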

\begin{proof}
    Setting  $\eta_k=\frac{t}{(m+k)^{1/2}}$ and $m\geq t^2$, we have $\eta_k\leq 1$. Due to $m\geq (c_1t)^2$, we have $\alpha=c_1\eta_k\leq\frac{c_1t}{m^{1/2}}\leq 1$ . Due to $m\geq(c_2t)^2$, we have $\beta=c_2\eta_k\leq\frac{c_2t}{m^{1/2}}\leq 1$. Also, we have $c_1,c_2\leq \frac{m^{1/2}}{t}$. Then using the above lemmas, we have

    \begin{align}
        &\mathbb{E}[\|\nabla f_{\delta}(x_{k+1},y_{k+1})+R_{k+1}-w_{k+1}\|^2]-\mathbb{E}[\|\nabla f_{\delta}(x_{k},y_{k})+R_{k}-w_k\|^2]\nonumber\\
        \leq&-\alpha\mathbb{E}[\|\nabla f_{\delta}(x_{k},y_{k})+R_{k}-w_k\|^2]+\alpha^2\sigma_f(d_2)+\frac{3}{\alpha}(\|R_k\|^2+\|R_{k+1}\|^2)\nonumber\\
        &+\frac{3}{\alpha}L_0^2\eta_k^2(\|{x}_k-\tilde{x}_{k+1}\|^2+\|{y}_k-\tilde{y}_{k+1}\|^2)\nonumber\\
        \leq&-c_1\eta_k\mathbb{E}[\|\nabla f_{\delta}(x_{k},y_{k})+R_{k}-w_k\|^2]+c_1^2\eta_k^2\sigma_f(d_2)+\frac{3}{c_1\eta_k}(\|R_k\|^2+\|R_{k+1}\|^2)\nonumber\\
        &+\frac{3}{c_1}L_0^2\eta_k(\|{x}_k-\tilde{x}_{k+1}\|^2+\|{y}_k-\tilde{y}_{k+1}\|^2)\nonumber\\
        \leq&-\frac{9}{2}\gamma\eta_k\mathbb{E}[\|\nabla f_{\delta}(x_{k},y_{k})+R_{k}-w_k\|^2]+\frac{m}{t^2}\eta_k^2\sigma_f(d_2)+\frac{2}{3\eta_k}(\|R_k\|^2+\|R_{k+1}\|^2)\nonumber\\
        &+\frac{2}{3}L_0^2\eta_k(\|{x}_k-\tilde{x}_{k+1}\|^2+\|{y}_k-\tilde{y}_{k+1}\|^2)
    \end{align}
    where the last inequality holds by $\frac{9}{2}\gamma\leq c_1\leq\frac{m^{1/2}}{t}$.
    \begin{align}
       &\mathbb{E}[\|\nabla g(x_{k+1},y_{k+1})-v_{k+1}\|^2]-\mathbb{E}[\|\nabla g(x_{k},y_{k})-w_k\|^2]\nonumber\\
        \leq&-\beta\mathbb{E}[\|\nabla g(x_{k},y_{k})-v_k\|^2]+\frac{2L_g^2}{\beta}\eta_k^2(\|{x}_k-\tilde{x}_{k+1}\|^2+\|{y}_k-\tilde{y}_{k+1}\|^2)\nonumber\\
        \leq&-c_2\eta_k\mathbb{E}[\|\nabla g(x_{k},y_{k})-v_k\|^2]+\frac{2L_g^2}{c_2}\eta_k(\|\tilde{x}_k-x_{k+1}\|^2+\|\tilde{y}_k-y_{k+1}\|^2)\nonumber\\
        \leq&-\frac{125L_0^2}{3\mu_g^2}\eta_k\mathbb{E}[\|\nabla g(x_{k},y_{k})-v_k\|^2]+\frac{6\mu_g^2L_g^2}{125L_0^2}\eta_k(\|{x}_k-\tilde{x}_{k+1}\|^2+\|{y}_k-\tilde{y}_{k+1}\|^2)
    \end{align}
    where the last inequality hold by $\frac{125L_0^2}{3\mu_g^2}\leq c_2\leq\frac{m^{1/2}}{t}$.

    In addition, we have 
    \begin{align}
        &F_{\delta}(x_{k+1})-F_{\delta}(x_{k})\nonumber\\
        \leq &\eta_k\gamma c_l\left(2L_1^2(\frac{d_2}{\delta})\|y^*(x_k)-y_k\|^2+2\|\nabla f_{\delta}(x_k,y_k)-w_{k}\|^2\right)-\frac{\eta_k}{2\gamma c_l}\|\tilde{x}_{k+1}-x_k\|^2\nonumber\\
        \leq& 2 \eta_k\gamma c_l L_0^2\|y^*(x_k)-y_k\|^2+2\eta_k\gamma c_l\|\nabla f_{\delta}(x_k,y_k)-w_{k}\|^2-\frac{\eta_k}{2\gamma c_l}\|\tilde{x}_{k+1}-x_k\|^2\nonumber\\
        \leq& 2 \eta_k\gamma c_l L_0^2\|y^*(x_k)-y_k\|^2+4\eta_k\gamma c_l\|\nabla f_{\delta}(x_k,y_k)-w_{k}-R_k\|^2+4\eta_k\gamma c_l\|R_k\|^2-\frac{\eta_k}{2\gamma c_l}\|\tilde{x}_{k+1}-x_k\|^2
    \end{align}
    
    We can also have 
    \begin{align}
	&\|y_{k+1}-y^*(x_{k+1})\|^2-\|y^*(x_k)-y_k\|^2\nonumber\\
        \leq&-\frac{\eta_k\tau\mu_g c_u}{4}\|y^*(x_k)-y_k\|^2+\frac{25\eta_k\tau c_u}{6\mu_g}\|\nabla_yg(x_k,y_k)-v_{k}\|^2\nonumber\\
		&-\frac{3\eta_k}{4}\|\tilde{y}_{k+1}-y_k\|^2+\frac{25L_y^2\eta_k}{6\tau\mu_g c_u}\|x_{k}-\tilde{x}_{k+1}\|^2
    \end{align}

    Then, we define a Lyapunov function, for any $k\geq 1$,
     \begin{align}
        &\Phi_{k+1}\nonumber\\
        =&\mathbb{E}[F_{\delta}(x_{k+1})+\frac{10L_0^a c_l}{\tau\mu_g c_u}\|y_{k+1}-y^*(x_{k+1})\|^2 + c_l(\|w_{k+1}-\bar{\nabla} f_{\delta}(x_{k+1},y_{k+1})-R_{k+1}\|^2\nonumber\\
        &+\|\nabla_y g(x_{k+1},y_{k+1})-v_{k+1}\|^2)]
    \end{align}
    We have 
    \begin{align}
        &\Phi_{k+1}-\Phi_{k}\nonumber\\
        =&\mathbb{E}[F_{\delta}(x_{k+1})-F_{\delta}(x_{k})]+\frac{10L_0^a c_l}{\tau\mu c_u}\mathbb{E}[\|y_{k+1}-y^*(x_{k+1})\|^2- \|y_{k}-y^*(x_{k})\|^2]\nonumber\\
        &+ c_l\mathbb{E}[\|w_{k+1}-\bar{\nabla} f(x_{k+1},y_{k+1})-R_{k+1}\|^2-\|w_k-\bar{\nabla} f(x_k,y_k)-R_k\|^2]\nonumber\\
        &+ c_l\mathbb{E}[\|\nabla_y g(x_{k+1},y_{k+1})-v_{k+1}\|^2-\|\nabla_y g(x_{k},y_{k})-v_{k}\|^2]\nonumber\\
        \leq&2 \eta_k\gamma c_l L_0^2\|y^*(x_k)-y_k\|^2+4\eta_k\gamma c_l\|\nabla f_{\delta}(x_k,y_k)-w_{k}-R_k\|^2+4\eta_k\gamma c_l\|R_k\|^2-\frac{\eta_k}{2\gamma c_l}\|\tilde{x}_{k+1}-x_k\|^2\nonumber\\
        &+\frac{10L_0^ac_l}{\tau\mu_g c_u}(-\frac{\eta_k\tau\mu_g c_u}{4}\|y^*(x_k)-y_k\|^2+\frac{25\eta_k\tau c_u}{6\mu_g}\|\nabla_yg(x_k,y_k)-v_{k}\|^2-\frac{3\eta_k}{4}\|\tilde{y}_{k+1}-y_k\|^2+\frac{25L_y^2\eta_k}{6\tau\mu_g c_u}\|x_{k}-\tilde{x}_{k+1}\|^2)\nonumber\\
        &+ c_l(-\frac{9}{2}\gamma\eta_k\mathbb{E}[\|\nabla f_{\delta}(x_{k},y_{k})+R_{k}-w_k\|^2]+\frac{m}{t^2}\eta_k^2\sigma_f(d_2)+\frac{2}{3\eta_k}(\|R_k\|^2+\|R_{k+1}\|^2)\nonumber\\
        &+\frac{2}{3}L_0^2\eta_k(\|{x}_k-\tilde{x}_{k+1}\|^2+\|{y}_k-\tilde{y}_{k+1}\|^2))\nonumber\\
        &+ c_l(-\frac{125L_0^2}{3\mu_g^2}\eta_k\mathbb{E}[\|\nabla g(x_{k},y_{k})-v_k\|^2]+\frac{6\mu_g^2L_g^2}{125L_0^2}\eta_k(\|{x}_k-\tilde{x}_{k+1}\|^2+\|{y}_k-\tilde{y}_{k+1}\|^2))\nonumber\\
        \leq&(2 \eta_k\gamma c_l L_0^2-\frac{5L_0^a c_l\eta_k}{2})\|y^*(x_k)-y_k\|^2+(4\eta_k\gamma c_l -\frac{9 c_l\gamma}{2}\eta_k)\|\nabla f_{\delta}(x_k,y_k)-w_{k}-R_k\|^2\nonumber\\
        &+(\frac{125L_0^a c_l\eta_k}{3\mu_g^2 }-c_l\frac{125L_0^2}{3\mu_g^2}\eta_k)\|\nabla_yg(x_k,y_k)-v_{k}\|^2\nonumber\\
        &+\left(-\frac{\eta_k}{2\gamma c_l}+\frac{125L_0^aL_y^2 c_l\eta_k}{3\tau^2\mu_g^2 c_u^2}+(\frac{2}{3}L_0^2+\frac{6\mu_g^2L_g^2}{125L_0^2})\eta_k c_l\right)\|\tilde{x}_{k+1}-x_k\|^2\nonumber\\
        &+\left(-\frac{15L_0^ac_l\eta_k}{2\tau\mu_g c_u}+(\frac{2}{3}L_0^2+\frac{6\mu_g^2L_g^2}{125L_0^2})\eta_k c_l\right)\|\tilde{y}_{k+1}-y_k\|^2\nonumber\\
        &+4\eta_k\gamma c_l\|R_k\|^2+\frac{2\gamma c_l}{3\eta_k}(\|R_k\|^2+\|R_{k+1}\|^2)+\frac{m}{t^2}\gamma c_l\eta_k^2\sigma_f(d_2)\nonumber\\
        \leq&-\frac{L_0^2\gamma c_l\eta_k}{2}\|y^*(x_k)-y_k\|^2 -\frac{\gamma c_l}{2}\eta_k\|\nabla f_{\delta}(x_k,y_k)-w_{k}-R_k\|^2\nonumber\\
        &-\frac{\eta_k}{4\gamma c_l}\|\tilde{x}_{k+1}-x_k\|^2+4\eta_k\gamma c_l\|R_k\|^2+\frac{2\gamma c_l}{3\eta_k}(\|R_k\|^2+\|R_{k+1}\|^2)\nonumber\\
        &+\frac{m}{t^2}\gamma c_l\eta_k^2\sigma_f(d_2)\nonumber\\
    \end{align}
    where the last inequality is due to $0\leq a\leq 2$,$\gamma\leq\min\{\frac{1}{L_0^{2-a}},\frac{1}{4c_l\left(\frac{125L_0^aL_y^2 c_l}{3\tau^2\mu_g^2 c_u^2}+(\frac{2}{3}L_0^2+\frac{6\mu_g^2L_g^2}{125L_0^2}) c_l\right)},\frac{2m^{1/2}}{9t}\}$, $0<\tau\leq\frac{15L_0^a}{2\mu_gc_u\left(\frac{2}{3}L_0^2+\frac{6\mu_g^2L_g^2}{125L_0^2}\right)}$ and $L_0>1$.

    Then, rearranging the above inequality, we have
    \begin{align}
        &\frac{\gamma c_l\eta_k}{4}\left(2L_0^2\|y^*(x_k)-y_k\|^2 +2\|\nabla f_{\delta}(x_k,y_k)-w_{k}-R_k\|^2+\|R_k\|^2+\frac{1}{\gamma^2 c_l^2}\|\tilde{x}_{k+1}-x_k\|^2\right)\nonumber\\
        \leq&\frac{17}{4}\eta_k\gamma c_l\|R_k\|^2+\frac{2\gamma c_l}{3\eta_k}(\|R_k\|^2+\|R_{k+1}\|^2)+\frac{m}{t^2}\gamma c_l\eta_k^2\sigma_f(d_2)+\Phi_k-\Phi_{k+1}
    \end{align}
    Taking the average over $k=1,\cdots,K$ on both sides and using $\eta_k\geq\eta_K$, $Q=\frac{1}{\mu_g\eta}\ln\frac{C_{gxy}C_{fy}K}{\mu_g}$, $\eta_k=\frac{t}{(m+k)^{1/2}}$ and $\Phi_1
        =\mathbb{E}[F_{\delta}(x_{1})+\frac{10L_0^a c_l}{\tau\mu_g c_u}\|y_{1}-y^*(x_{1})\|^2 + c_l(\|w_{1}-\bar{\nabla} f_{\delta}(x_{1},y_{k+1})-R_{1}\|^2+\|\nabla_y g(x_{1},y_{1})-v_{1}\|^2)]$, we have
    \begin{align}
        &\frac{1}{K}\sum_{k=1}^K\mathbb{E}[\frac{1}{4}\left(2L_0^2\|y^*(x_k)-y_k\|^2 +2\|\nabla f_{\delta}(x_k,y_k)-w_{k}-R_k\|^2+\|R_k\|^2+\frac{1}{\gamma^2 c_l^2}\|\tilde{x}_{k+1}-x_k\|^2\right)]\nonumber\\
        \leq&\frac{1}{K\eta_K}\left(\frac{\Phi_1-\Phi^*}{\gamma c_l}+\frac{17}{4K^2}\sum_{k=1}^K\eta_k+\frac{4}{3K^2}\sum_{k=1}^K\frac{1}{\eta_k}+(\frac{m}{t^2}\sigma_f(d_2))\sum_{k=1}^K\eta_k^2\right)\nonumber\\
        \leq&\frac{(m+K)^{1/2}}{Kt}\left(\frac{\Phi_1-\Phi^*}{\gamma c_l}+\frac{17t}{4K^2}(m+K)^{1/2}+\frac{4}{3tK^2}(m+K)^{3/2}+(m\sigma_f(d_2))t^2\ln (m+K)\right)
    \end{align}
    According to the Jesen's inequality, we have
    \begin{align}
        &\frac{1}{K}\sum_{k=1}^K\mathbb{E}[\frac{1}{2}\left(\sqrt{2}L_0^2\|y^*(x_k)-y_k\| +\sqrt{2}\|\nabla f_{\delta}(x_k,y_k)-w_{k}-R_k\|+\|R_k\|+\frac{1}{\gamma c_l}\|\tilde{x}_{k+1}-x_k\|\right)]\nonumber\\
        \leq&\left(\frac{4}{K}\sum_{k=1}^K\frac{1}{4}\left(2L_0^2\|y^*(x_k)-y_k\|^2 +2\|\nabla f_{\delta}(x_k,y_k)-w_{k}-R_k\|^2+\|R_k\|^2+\frac{1}{\gamma^2 c_l^2}\|\tilde{x}_{k+1}-x_k\|^2\right)\right)^{1/2}\nonumber\\
        \leq&\frac{2(m+K)^{1/4}}{\sqrt{Kt}}\sqrt{\frac{\Phi_1-\Phi^*}{\gamma c_l}+\frac{17t}{4K^2}(m+K)^{1/2}+\frac{4}{3tK^2}(m+K)^{3/2}+(m\sigma_f(d_2))t^2\ln (m+K)}\nonumber\\
        \leq&\frac{2m^{1/4}\sqrt{G}}{\sqrt{Kt}}+\frac{2\sqrt{G}}{(Kt)^{1/4}}
    \end{align}
    where $G=\frac{\Phi_1-\Phi^*}{\gamma c_l}+\frac{17t}{4K^2}(m+K)^{1/2}+\frac{4}{3tK^2}(m+K)^{3/2}+(m\sigma_f(d_2))t^2\ln (m+K)$.
    
    Let $\mathcal{G}\left(x_k,\nabla F_{\delta}(x_k),\hat{\gamma}\right)=\frac{1}{\hat{\gamma}}\left( x_k-\mathcal{P}_{\mathcal{X}}\left(x_k-\hat{\gamma}\nabla F_{\delta}(x_k)\right)\right)$
 
    \begin{align}
	&\|\nabla F_{\delta}(x_k)\| \nonumber\\  
     =&\|\mathcal{G}\left(x_k,\nabla F_{\delta}(x_k),\hat{\gamma}\right)\|\nonumber\\
	    \leq&\|\mathcal{G}\left(x_k,\nabla F_{\delta}(x_k),\hat{\gamma}\right)-\mathcal{G}(x_t,w_k,\hat{\gamma})\|+\|\mathcal{G}(x_t,w_k,\hat{\gamma})\|\nonumber\\
	    \leq&\|\nabla F_{\delta}(x_k)-w_k\|+\|\mathcal{G}(x_t,w_k,\hat{\gamma})\|\nonumber\\\
	   \leq&\|w_k-{\nabla} f(x_k,y_k)-R_k\|+\|R_k\|^2+L_0\|y^*(x_k)-y_k\|+\frac{1}{\gamma c_l}\|x_k-\tilde{x}_{k+1}\|.
	\end{align}

    Finally, we can obtain 
    \begin{align}
        &\frac{1}{K}\sum_{k=1}^K\mathbb{E}[\frac{1}{2}\|\nabla F_{\delta}(x_k)\|]
        \leq\frac{2m^{1/4}\sqrt{G}}{\sqrt{Kt}}+\frac{2\sqrt{G}}{(Kt)^{1/4}}.
    \end{align}

    Since the random count $r\in\{1,\cdot, K\}$ is uniformly sampled, we have 

    \begin{align}
        &\mathbb{E}[\frac{1}{2}\|\nabla F_{\delta}(x_r)\|]=\frac{1}{K}\sum_{k=1}^K\mathbb{E}[\frac{1}{2}\|\nabla F_{\delta}(x_k)\|]
        \leq\frac{2m^{1/4}\sqrt{G}}{\sqrt{Kt}}+\frac{2\sqrt{G}}{(Kt)^{1/4}}.
    \end{align}

    By Proposition \ref{prop2}, we have $\nabla F_{\delta}(x_r)\in\bar{\partial}_{\delta}F(x_r)$. This implies that 
    \begin{align}
        &\min\{\|h\|:h\in\bar{\partial}_{\delta} F(x_r)\}\leq \mathbb{E}[\|\nabla F_{\delta}(x_r)\|]
        \leq\frac{4m^{1/4}\sqrt{G}}{\sqrt{Kt}}+\frac{4\sqrt{G}}{(Kt)^{1/4}}.
    \end{align}
    That completes the proof.
    
\end{proof}

\end{document}